\renewcommand{\thefootnote}{\fnsymbol{footnote}}
\newtheorem{theorem}{Theorem}
\newtheorem{corollary}{Corollary}
\newtheorem{definition}{Definition}
\newtheorem{lemma}{Lemma}
\newtheorem{remark}{Remark}
\newtheorem{proposition}{Proposition}
\numberwithin{equation}{section}
\begin{document}
\title[Growth of $(\alpha ,\beta ,\gamma )$-order solutions of linear
differential equations]{Growth of $(\alpha ,\beta ,\gamma )$-order solutions
of linear differential equations with entire coefficients}
\author[B. Bela\"{\i}di and T. Biswas]{Benharrat Bela\"{\i}di$^{\ast }$ and
Tanmay Biswas}
\address{B. Bela\"{\i}di: Department of Mathematics, Laboratory of Pure and
Applied Mathematics, University of Mostaganem (UMAB), B. P. 227
Mostaganem-(Algeria).}
\email{benharrat.belaidi@univ-mosta.dz}
\address{T. Biswas: Rajbari, Rabindrapally, R. N. Tagore Road, P.O.-
Krishnagar, P.S. Kotwali, Dist-Nadia, PIN-741101, West Bengal, India}
\email{tanmaybiswas\_math@rediffmail.com}
\thanks{}
\keywords{Differential equations, $(\alpha ,\beta ,\gamma )$-order, $(\alpha
,\beta ,\gamma )$-type, growth of solutions\\
{\small AMS Subject Classification\ }$(2010)${\small : }30D35, 34M10.}

\begin{abstract}
The main aim of this paper is to study the growth of solutions of higher
order linear differential equations using the concepts of $(\alpha ,\beta
,\gamma )$-order and $(\alpha ,\beta ,\gamma )$-type. We obtain some results
which improve and generalize some previous results of Kinnunen \cite{13},
Long et al. \cite{L} as well as Bela\"{\i}di \cite{b3}, \cite{b5}.
\end{abstract}

\maketitle

\let\thefootnote\relax
\footnotetext{%
Corresponding author: benharrat.belaidi@univ-mosta.dz (Benharrat Bela\"{\i}di%
$\ast$)}

\section{\textbf{Introduction}}

\qquad Throughout this paper, we assume that the reader is familiar with the
fundamental results and the standard notations of the Nevanlinna value
distribution theory of entire and meromorphic functions and the theory of
complex linear differential equations which are available in \cite{1, 19, 3,
LY} and therefore we do not explain those in details. Let $f$ be an entire
function defined on $%
\mathbb{C}
$. The maximum modulus function $M\left( r,f\right) $, the maximum term $\mu
\left( r,f\right) $ and central index $\nu (r,f)$ of $f(z)=\overset{+\infty }%
{\underset{n=0}{\sum }}a_{n}z^{n}$ on $\left\vert z\right\vert =r$ are
defined as $M\left( r,f\right) =\underset{\left\vert z\right\vert =r}{\max }%
\left\vert f\left( z\right) \right\vert $, $\mu \left( r,f\right) =\underset{%
n\geq 0}{\max }\left\{ |a_{n}|r^{n}\right\} $ and $\nu (r,f)=\max \left\{
n:\mu \left( r,f\right) =|a_{n}|r^{n}\right\} $ respectively. When $f$ is a
meromorphic\ function, one may introduce another function $T\left(
r,f\right) $ known as Nevanlinna's characteristic function of $f(z)$,
playing the same role as $M\left( r,f\right) $ defined by 
\begin{equation*}
T\left( r,f\right) =m\left( r,f\right) +N\left( r,f\right) ,
\end{equation*}%
with%
\begin{equation*}
m\left( r,f\right) =\frac{1}{2\pi }\int_{0}^{2\pi }\log ^{+}\left\vert
f\left( re^{i\theta }\right) \right\vert d\theta ,
\end{equation*}%
and%
\begin{equation*}
N\left( r,f\right) =\int_{0}^{r}\frac{n\left( t,f\right) -n\left( 0,f\right) 
}{t}dt+n\left( 0,f\right) \log r,
\end{equation*}%
where\ $\log ^{+}x=\max \left( 0,\log x\right) $\ for $x>0,$ and $n\left(
t,\ f\right) $\ is the number of poles of $f\left( z\right) $ lying in $%
\left\vert z\right\vert \leq t,$ counted according to their multiplicity. To
study the generalized growth properties of entire and meromorphic functions,
the concepts of different growth indicators such as the iterated $p$-order
(see \cite{13, DS}), the $(p,q)$-th order (see \cite{8, 9}), $(p,q)$-$%
\varphi $ order (see \cite{STX}) etc. are very useful and during the past
decades, several authors made close investigations on the generalized growth
properties of entire and meromorphic functions related to the above growth
indicators in some different directions. The theory of complex linear
differential equations has been developed since 1960s. Many authors have
investigated the complex linear differential equation%
\begin{equation}
L(f):=f^{(k)}(z)+A_{k-1}(z)f^{(k-1)}(z)+\cdots +A_{1}(z)f^{\prime
}(z)+A_{0}(z)f(z)=0  \label{1}
\end{equation}%
and achieved many valuable results when the coefficients $%
A_{0}(z),...,A_{k-1}(z)$ $(k\geq 2)$ in (\ref{1}) are entire functions of
finite order or finite iterated $p$-order or $(p,q)$-th order or $(p,q)$-$%
\varphi $ order; see (\cite{b1}, \cite{b2}, \cite{b3}, \cite{b}, \cite{bou}, 
\cite{c1}, \cite{20}, \cite{13}, \cite{19}, \cite{10}, \cite{11}, \cite{l}, 
\cite{STX}, \cite{t2}, \cite{t3}, \cite{15}).

\qquad Chyzhykov and Semochko \cite{CS} showed that both definitions of
iterated $p$-order and the $(p,q)$-th order have the disadvantage that they
do not cover arbitrary growth (see \cite[Example 1.4]{CS}). They used more
general scale, called the $\varphi $-order (see \cite{CS}) and the concept
of $\varphi $-order is used to study the growth of solutions of complex
differential equations which extend and improve many previous results (see 
\cite{b4, b5, CS, k}). Extending this notion, Long et al. \cite{L} recently
introduce the concepts of $[p,q]_{,\varphi }$-order and $[p,q]_{,\varphi }$%
-type (see \cite{L}) and obtain some interesting results which considerably
extend and improve some earlier results. For details\ one may see \cite{L}.

\qquad On the other hand, Mulyava et al. \cite{MST} have used the concept of 
$(\alpha ,\beta )$-order or generalized order of an entire function in order
to investigate the properties of solutions of a heterogeneous differential
equation of the second order and obtained several remarkable results. For
details\ one may see \cite{MST}.

\qquad The main aim of this paper is to study the growth of solutions of
higher order linear differential equations using the concepts of $(\alpha
,\beta ,\gamma )$-order and $(\alpha ,\beta ,\gamma )$-type. In fact, some
works relating to study the growth of solutions of higher order linear
differential equations using the concepts of $(\alpha ,\beta ,\gamma )$%
-order have been explored in \cite{b6} and \cite{b7}. In this paper, we
obtain some results which improve and generalize some previous results of
Long et al. \cite{L} as well as Bela\"{\i}di \cite{b3}, \cite{b5}. Further,
we give an answer to the problem of Chyzhykov and Semochko (\cite{CS},
Remark 1.11).

\section{\textbf{Definitions and Notations}}

\qquad {We denote the} Lebesgue linear measure of a set $F\subset \left[
0,+\infty \right) $ by $m\left( F\right) =\underset{F}{\int }dt,$ and the
logarithmic measure of a set $E\subset \left( 1,+\infty \right) $ by $%
m_{l}\left( E\right) =\underset{E}{\int }\frac{dt}{t}.$ Furthermore, let
throughout \ this paper $E$ represents a set of finite logarithmic measure, $%
I$ represents a set of infinite logarithmic measure and $F$ represents a set
of finite linear measure in proof of our results. For $x\in \lbrack
0,+\infty )$ and $k\in \mathbb{N}$ where $%
\mathbb{N}
$ is the set of all positive integers, define{\ iterations of the
exponential and logarithmic functions as }$\exp ^{[k]}x=\exp (\exp
^{[k-1]}x) $ and $\log ^{[k]}x=\log (\log ^{[k-1]}x)$ {with convention that $%
\log ^{[0]}x=x$, $\log ^{[-1]}x=\exp {x}$, $\exp ^{[0]}x=x$ and $\exp
^{[-1]}x=\log x$.} Now, let $L$ be a class of continuous non-negative on $%
(-\infty ,+\infty )$ function $\alpha $ such that $\alpha (x)=\alpha
(x_{0})\geq 0$ for $x\leq x_{0}$ and $\alpha (x)\uparrow +\infty $ as $%
x_{0}\leq x\rightarrow +\infty $. We say that $\alpha \in L_{1}$, if $\alpha
\in L$ and $\alpha (a+b)\leq \alpha (a)+\alpha (b)+c$ for all $a,b\geq R_{0}$
and fixed $c\in (0,+\infty )$. Further, we say that $\alpha \in L_{2}$, if $%
\alpha \in L$ and $\alpha (x+O(1))=(1+o(1))\alpha (x)$ as $x\rightarrow
+\infty $. Finally, $\alpha \in L_{3}$, if $\alpha \in L$ and $\alpha
(a+b)\leq \alpha (a)+\alpha (b)$ for all $a,b\geq R_{0}$ i.e., $\alpha $ is
subadditive. Clearly $L_{3}\subset L_{1}$.

\qquad Particularly, when $\alpha \in L_{3}$, then one can easily verify
that $\alpha (mr)\leq m\alpha (r),$ $m\geq 2$ is an integer. Up to a
normalization, subadditivity is implied by concavity. Indeed, if $\alpha (r)$
is concave on $[0,+\infty )$ and satisfies $\alpha (0)\geq 0$, then for $%
t\in \lbrack 0,1]$,%
\begin{equation*}
\alpha (tx)=\alpha (tx+(1-t)\cdot 0)\geq t\alpha (x)+(1-t)\alpha (0)\geq
t\alpha (x)\text{,}
\end{equation*}%
so that by choosing $t=\frac{a}{a+b}$ or $t=\frac{b}{a+b}$,%
\begin{eqnarray*}
\alpha (a+b) &=&\frac{a}{a+b}\alpha (a+b)+\frac{b}{a+b}\alpha (a+b) \\
&\leq &\alpha \Big(\frac{a}{a+b}(a+b)\Big)+\alpha \Big(\frac{b}{a+b}(a+b)%
\Big) \\
&=&\alpha (a)+\alpha (b)\text{, \ \ }a,b\geq 0\text{.}
\end{eqnarray*}%
As a non-decreasing, subadditive and unbounded function, $\alpha (r)$
satisfies%
\begin{equation*}
\alpha (r)\leq \alpha (r+R_{0})\leq \alpha (r)+\alpha (R_{0})
\end{equation*}%
for any $R_{0}\geq 0$. This yields that $\alpha (r)\sim \alpha (r+R_{0})$ as 
$r\rightarrow +\infty $.

\qquad Now we add two conditions on $\alpha ,$ $\beta $ and $\gamma $: (i)
Always $\alpha \in L_{1},$ $\beta \in L_{2}$ and $\gamma \in L_{3}$; and
(ii) $\alpha (\log ^{[p]}x)=o(\beta (\log \gamma (x))),$ $p\geq 2,$ $\alpha
(\log x)=o(\alpha \left( x\right) )$ and $\alpha ^{-1}(kx)=o\left( \alpha
^{-1}(x)\right) $ $\left( 0<k<1\right) $ as $x\rightarrow +\infty $.

\qquad Throughout this paper, we assume that $\alpha ,$ $\beta $ and $\gamma 
$ always satisfy the above two conditions unless otherwise specifically
stated.

\qquad Heittokangas et al. \cite{HWWY} have introduced a new concept of $%
\varphi $-order of entire and meromorphic function considering $\varphi $ as
subadditive function. For details\ one may see \cite{HWWY}. Extending this
notion, recently Bela\"{\i}di et al. \cite{b6} introduce the definition of
the $(\alpha ,\beta ,\gamma )$-order of a meromorphic function in the
following way:

\begin{definition}
\label{d1}(\cite{b6}) The $(\alpha ,\beta ,\gamma )$-order denoted by $%
\sigma _{(\alpha ,\beta ,\gamma )}[f]$ of a meromorphic function $f$ is
defined by%
\begin{equation*}
\sigma _{(\alpha ,\beta ,\gamma )}[f]=\underset{r\rightarrow +\infty }{\lim
\sup }\frac{\alpha \left( \log T\left( r,f\right) \right) }{\beta \left(
\log \gamma \left( r\right) \right) }\text{.}
\end{equation*}
\end{definition}

\qquad By the inequality in \cite{1} 
\begin{equation*}
T(r,f)\leq \log ^{+}M(r,f)\leq \frac{R+r}{R-r}T(R,f)\text{ }(0<r<R)
\end{equation*}%
for an entire function $f(z)$, one can easily verify that \cite{b6} 
\begin{equation*}
\sigma _{(\alpha ,\beta ,\gamma )}[f]=\underset{r\rightarrow +\infty }{\lim
\sup }\frac{\alpha \left( \log T\left( r,f\right) \right) }{\beta \left(
\log \gamma \left( r\right) \right) }=\underset{r\rightarrow +\infty }{\lim
\sup }\frac{\alpha (\log ^{[2]}M(r,f))}{\beta \left( \log \gamma \left(
r\right) \right) }\text{.}
\end{equation*}

\begin{proposition}
\label{p1}(\cite{b6}) If $f$ is an entire function, then%
\begin{equation*}
\sigma _{(\alpha (\log ),\beta ,\gamma )}[f]=\underset{r\rightarrow +\infty }%
{\lim \sup }\frac{\alpha (\log ^{[2]}T(r,f))}{\beta \left( \log \gamma
\left( r\right) \right) }=\underset{r\rightarrow +\infty }{\lim \sup }\frac{%
\alpha (\log ^{[3]}M(r,f))}{\beta \left( \log \gamma \left( r\right) \right) 
}\text{,}
\end{equation*}%
where $(\alpha (\log ),\beta ,\gamma )$-order denoted by $\sigma _{(\alpha
(\log ),\beta ,\gamma )}[f]$.
\end{proposition}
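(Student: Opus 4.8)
The plan is to run the argument exactly parallel to the computation already recorded in the excerpt for $\sigma_{(\alpha,\beta,\gamma)}[f]$, simply replacing the outer function $\alpha$ by $\alpha(\log(\cdot))$. The starting point is the two-sided comparison of $T(r,f)$ and $M(r,f)$ quoted above: for an entire $f$ and $0<r<R$,
\begin{equation*}
T(r,f)\leq \log^{+}M(r,f)\leq \frac{R+r}{R-r}T(R,f).
\end{equation*}
Fixing the standard choice $R=2r$ gives $\log^{+}M(r,f)\leq 3\,T(2r,f)$, so that for all large $r$,
\begin{equation*}
\log^{+}M(r,f)\leq \log T(2r,f)+\log 3 \quad\text{and}\quad \log T(r,f)\leq \log^{[2]}M(r,f).
\end{equation*}
Applying $\log$ once more and then the increasing function $\alpha$, one obtains, for all large $r$,
\begin{equation*}
\alpha\big(\log^{[3]}M(r,f)\big)\leq \alpha\big(\log^{[2]}T(2r,f)+O(1)\big)
\quad\text{and}\quad
\alpha\big(\log^{[2]}T(r,f)\big)\leq \alpha\big(\log^{[3]}M(r,f)\big).
\end{equation*}

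Next I would divide through by $\beta(\log\gamma(r))$ and take $\limsup_{r\to+\infty}$. The second chain of inequalities immediately gives the inequality $\sigma_{(\alpha(\log),\beta,\gamma)}[f]\leq \limsup_r \alpha(\log^{[3]}M(r,f))/\beta(\log\gamma(r))$ after recalling Definition~\ref{d1} with $\alpha$ replaced by $\alpha(\log)$ (so that the numerator in the definition, $\alpha(\log(\cdot))$ evaluated at $\log T(r,f)$, is exactly $\alpha(\log^{[2]}T(r,f))$). For the reverse inequality I must absorb the additive $O(1)$ inside $\alpha$ and also pass from argument $2r$ back to argument $r$ in the denominator. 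The first is handled by the hypothesis $\alpha\in L_1$ together with the subsequent remark that $\alpha(x)\sim\alpha(x+R_0)$, or more directly by the condition $\alpha(\log x)=o(\alpha(x))$; in any case $\alpha(u+O(1))=\alpha(u)(1+o(1))$ along the relevant sequence. The replacement of $2r$ by $r$ in $\beta(\log\gamma(\cdot))$ is handled by $\gamma\in L_3$ (subadditivity, giving $\gamma(2r)\le 2\gamma(r)$, hence $\log\gamma(2r)\le\log\gamma(r)+\log 2$) and then $\beta\in L_2$ (giving $\beta(\log\gamma(r)+O(1))=(1+o(1))\beta(\log\gamma(r))$).

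Finally I would combine the two $\limsup$ estimates to conclude equality of
\begin{equation*}
\sigma_{(\alpha(\log),\beta,\gamma)}[f]=\underset{r\to+\infty}{\limsup}\frac{\alpha(\log^{[2]}T(r,f))}{\beta(\log\gamma(r))}=\underset{r\to+\infty}{\limsup}\frac{\alpha(\log^{[3]}M(r,f))}{\beta(\log\gamma(r))},
\end{equation*}
taking care that when the common value is $0$ or $+\infty$ the $(1+o(1))$ factors are still harmless (the $0$ case needs the one-sided bounds only, and the $+\infty$ case is immediate). The main obstacle, such as it is, is purely bookkeeping: making sure that the error terms introduced by passing $r\mapsto 2r$ and by the $O(1)$ shifts genuinely disappear in the limit, which is where the structural hypotheses $\alpha\in L_1$, $\beta\in L_2$, $\gamma\in L_3$ (and the auxiliary growth condition $\alpha(\log x)=o(\alpha(x))$) are each used exactly once. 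No deeper idea is required beyond the comparison of $T$ and $M$ already invoked in the excerpt.
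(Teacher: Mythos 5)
Your argument is correct and is exactly the route the paper intends (the paper only cites \cite{b6} for this proposition, but the displayed inequality $T(r,f)\leq \log^{+}M(r,f)\leq \frac{R+r}{R-r}T(R,f)$ with $R=2r$, followed by the $\gamma\in L_{3}$, $\beta\in L_{2}$, $\alpha\in L_{1}$ bookkeeping, is precisely the proof used there and elsewhere in the paper, e.g.\ in Lemma \ref{l3}). The only blemish is a transcription slip in your first display, where $\log^{+}M(r,f)\leq \log T(2r,f)+\log 3$ should read $\log^{[2]}M(r,f)\leq \log T(2r,f)+\log 3$; your subsequent steps already use the corrected form, so nothing is affected.
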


\qquad Now to compare the relative growth of two meromorphic functions
having same non zero finite $(\alpha ,\beta ,\gamma )$-order, one may
introduce the definition of $(\alpha ,\beta ,\gamma )$-type in the following
manner:

\begin{definition}
\label{d2}The $(\alpha ,\beta ,\gamma )$-type denoted by $\tau _{(\alpha
,\beta ,\gamma )}[f]$ of a meromorphic function $f$ with $0<\sigma _{(\alpha
,\beta ,\gamma )}[f]<+\infty $ is defined by%
\begin{equation*}
\tau _{(\alpha ,\beta ,\gamma )}[f]=\underset{r\rightarrow +\infty }{\lim
\sup }\frac{\exp (\alpha \left( \log T\left( r,f\right) \right) )}{\left(
\exp \left( \beta \left( \log \gamma \left( r\right) \right) \right) \right)
^{\sigma _{(\alpha ,\beta ,\gamma )}[f]}}\text{.}
\end{equation*}%
If $f$ is an entire function with $\sigma _{(\alpha ,\beta ,\gamma )}[f]\in
(0,+\infty )$, then the $(\alpha ,\beta ,\gamma )$-type of $f$ is defined by%
\begin{equation*}
\tau _{(\alpha ,\beta ,\gamma ),M}[f]=\underset{r\rightarrow +\infty }{\lim
\sup }\frac{\exp (\alpha (\log ^{[2]}M(r,f)))}{\left( \exp \left( \beta
\left( \log \gamma \left( r\right) \right) \right) \right) ^{\sigma
_{(\alpha ,\beta ,\gamma )}[f]}}\text{.}
\end{equation*}
\end{definition}

\qquad Similar to Definition \ref{d2}, we can also define the $(\alpha (\log
),\beta ,\gamma )$-type of a meromorphic function $f$ in the following way:

\begin{definition}
\label{d3}The $(\alpha (\log ),\beta ,\gamma )$-type denoted by $\tau
_{(\alpha (\log ),\beta ,\gamma )}[f]$ of a meromorphic function $f$ with $%
0<\sigma _{(\alpha (\log ),\beta ,\gamma )}[f]<+\infty $ is defined by%
\begin{equation*}
\tau _{(\alpha (\log ),\beta ,\gamma )}[f]=\underset{r\rightarrow +\infty }{%
\lim \sup }\frac{\exp (\alpha (\log ^{[2]}T(r,f)))}{\left( \exp \left( \beta
\left( \log \gamma \left( r\right) \right) \right) \right) ^{\sigma
_{(\alpha (\log ),\beta ,\gamma )}[f]}}\text{.}
\end{equation*}%
If $f$ is an entire function with $\sigma _{(\alpha (\log ),\beta ,\gamma
)}[f]\in (0,+\infty )$, then the $(\alpha ,\beta ,\gamma )$-type of $f$ is
defined by%
\begin{equation*}
\tau _{(\alpha (\log ),\beta ,\gamma ),M}[f]=\underset{r\rightarrow +\infty }%
{\lim \sup }\frac{\exp (\alpha (\log ^{[3]}M(r,f)))}{\left( \exp \left(
\beta \left( \log \gamma \left( r\right) \right) \right) \right) ^{\sigma
_{(\alpha (\log ),\beta ,\gamma )}[f]}}\text{.}
\end{equation*}
\end{definition}

\begin{proposition}
\label{p2}(\cite{b6}) Let $f_{1},$ $f_{2}$ be nonconstant meromorphic
functions with $\sigma _{(\alpha ,\beta ,\gamma )}[f_{1}]$ and $\sigma
_{(\alpha ,\beta ,\gamma )}[f_{2}]$ as their $(\alpha ,\beta ,\gamma )$%
-order. Then\newline
(i) $\sigma _{(\alpha ,\beta ,\gamma )}[f_{1}\pm f_{2}]\leq \max \{\sigma
_{(\alpha ,\beta ,\gamma )}[f_{1}],$ $\sigma _{(\alpha ,\beta ,\gamma
)}[f_{2}]\};$\newline
(ii) $\sigma _{(\alpha ,\beta ,\gamma )}[f_{1}\cdot f_{2}]\leq \max \{\sigma
_{(\alpha ,\beta ,\gamma )}[f_{1}],$ $\sigma _{(\alpha ,\beta ,\gamma
)}[f_{2}]\};$\newline
(iii) If $\sigma _{(\alpha ,\beta ,\gamma )}[f_{1}]\neq \sigma _{(\alpha
,\beta ,\gamma )}[f_{2}]$, then $\sigma _{(\alpha ,\beta ,\gamma )}[f_{1}\pm
f_{2}]=\max \{\sigma _{(\alpha ,\beta ,\gamma )}[f_{1}],$ $\sigma _{(\alpha
,\beta ,\gamma )}[f_{2}]\};$\newline
(iv) If $\sigma _{(\alpha ,\beta ,\gamma )}[f_{1}]\neq \sigma _{(\alpha
,\beta ,\gamma )}[f_{2}]$, then $\sigma _{(\alpha ,\beta ,\gamma
)}[f_{2}\cdot f_{2}]=\max \{\sigma _{(\alpha ,\beta ,\gamma )}[f_{1}],$ $%
\sigma _{(\alpha ,\beta ,\gamma )}[f_{2}]\}$.
\end{proposition}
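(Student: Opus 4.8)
The idea is to reduce everything to the elementary Nevanlinna inequalities $T(r,f_{1}\pm f_{2})\leq T(r,f_{1})+T(r,f_{2})+\log 2$ and $T(r,f_{1}\cdot f_{2})\leq T(r,f_{1})+T(r,f_{2})$ for meromorphic functions (see the references in the Introduction), together with the properties of the scale $\alpha \in L_{1}$. For (i) and (ii), set $\sigma :=\max \{\sigma _{(\alpha ,\beta ,\gamma )}[f_{1}],\sigma _{(\alpha ,\beta ,\gamma )}[f_{2}]\}$ and $M(r):=\max \{T(r,f_{1}),T(r,f_{2})\}$. Since $f_{1},f_{2}$ are nonconstant, $T(r,f_{j})\rightarrow +\infty $, hence $\log M(r)\rightarrow +\infty $; also $\gamma (r)\rightarrow +\infty $, so $\beta (\log \gamma (r))\rightarrow +\infty $. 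From the two inequalities above, for all large $r$ we get $\log T(r,f_{1}\pm f_{2})\leq \log M(r)+\log 3$ and $\log T(r,f_{1}\cdot f_{2})\leq \log M(r)+\log 3$. Applying the non-decreasing function $\alpha $ and then the $L_{1}$ property $\alpha (a+b)\leq \alpha (a)+\alpha (b)+c$ (with $a=\log M(r)\geq R_{0}$ and $b=\max \{\log 3,R_{0}\}$), we obtain $\alpha (\log T(r,f_{1}\pm f_{2}))\leq \alpha (\log M(r))+O(1)$ and similarly for the product. Because $\alpha $ and $\log $ are non-decreasing, $\alpha (\log M(r))=\max \{\alpha (\log T(r,f_{1})),\alpha (\log T(r,f_{2}))\}$. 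Dividing by $\beta (\log \gamma (r))$, letting $r\rightarrow +\infty $, and using $O(1)/\beta (\log \gamma (r))\rightarrow 0$ together with the identity $\limsup \max \{u(r),v(r)\}=\max \{\limsup u(r),\limsup v(r)\}$, we conclude $\sigma _{(\alpha ,\beta ,\gamma )}[f_{1}\pm f_{2}]\leq \sigma $ and $\sigma _{(\alpha ,\beta ,\gamma )}[f_{1}\cdot f_{2}]\leq \sigma $.

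For (iii) and (iv), assume without loss of generality that $\sigma _{(\alpha ,\beta ,\gamma )}[f_{1}]>\sigma _{(\alpha ,\beta ,\gamma )}[f_{2}]$. The same $L_{1}$-absorption argument, applied to the identities $T(r,cg)=T(r,g)+O(1)$, $T(r,g+c)=T(r,g)+O(1)$ and $T(r,1/g)=T(r,g)+O(1)$ (the last from the first fundamental theorem), yields $\sigma _{(\alpha ,\beta ,\gamma )}[cg]=\sigma _{(\alpha ,\beta ,\gamma )}[g+c]=\sigma _{(\alpha ,\beta ,\gamma )}[1/g]=\sigma _{(\alpha ,\beta ,\gamma )}[g]$ for nonconstant $g$ and finite nonzero constant $c$; in particular $f_{1}\pm f_{2}$ and $f_{1}\cdot f_{2}$ cannot reduce to a constant, since such an equality would force $\sigma _{(\alpha ,\beta ,\gamma )}[f_{1}]=\sigma _{(\alpha ,\beta ,\gamma )}[f_{2}]$. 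Writing $f_{1}=(f_{1}\pm f_{2})\mp f_{2}$ and applying part (i), $\sigma _{(\alpha ,\beta ,\gamma )}[f_{1}]\leq \max \{\sigma _{(\alpha ,\beta ,\gamma )}[f_{1}\pm f_{2}],\sigma _{(\alpha ,\beta ,\gamma )}[f_{2}]\}$; since $\sigma _{(\alpha ,\beta ,\gamma )}[f_{2}]<\sigma _{(\alpha ,\beta ,\gamma )}[f_{1}]$, this forces $\sigma _{(\alpha ,\beta ,\gamma )}[f_{1}\pm f_{2}]\geq \sigma _{(\alpha ,\beta ,\gamma )}[f_{1}]$, and combining with part (i) gives equality, hence $\sigma _{(\alpha ,\beta ,\gamma )}[f_{1}\pm f_{2}]=\max \{\sigma _{(\alpha ,\beta ,\gamma )}[f_{1}],\sigma _{(\alpha ,\beta ,\gamma )}[f_{2}]\}$. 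Part (iv) follows identically from $f_{1}=(f_{1}\cdot f_{2})\cdot (1/f_{2})$ using part (ii) and $\sigma _{(\alpha ,\beta ,\gamma )}[1/f_{2}]=\sigma _{(\alpha ,\beta ,\gamma )}[f_{2}]$.

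The Nevanlinna bookkeeping is routine; the one genuinely delicate step is the passage through $\alpha $. Since $\alpha $ is only assumed to lie in $L_{1}$ and not to be additive, one must exploit $\alpha (a+b)\leq \alpha (a)+\alpha (b)+c$ together with the fact that $\alpha $ increases to $+\infty $ (so that the arguments eventually exceed $R_{0}$ and $x_{0}$) in order to absorb the additive term $\log 3$, and the $O(1)$'s coming from the first fundamental theorem, into an error that is annihilated once we divide by $\beta (\log \gamma (r))\rightarrow +\infty $. A minor secondary issue, relevant only to (iii)--(iv), is the exclusion of the degenerate cases in which $f_{1}\pm f_{2}$ or $f_{1}\cdot f_{2}$ collapses to a constant.
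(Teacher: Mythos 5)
Your proof is correct, and it is essentially the standard argument: the paper itself states Proposition \ref{p2} as a quoted result from \cite{b6} without reproducing a proof, but your route (the elementary bounds $T(r,f_{1}\pm f_{2})\leq T(r,f_{1})+T(r,f_{2})+\log 2$ and $T(r,f_{1}f_{2})\leq T(r,f_{1})+T(r,f_{2})$, absorption of additive constants via the $L_{1}$ property of $\alpha$ and division by $\beta(\log\gamma(r))\to+\infty$, then the decompositions $f_{1}=(f_{1}\pm f_{2})\mp f_{2}$ and $f_{1}=(f_{1}f_{2})\cdot(1/f_{2})$ for the equality cases) is exactly the technique the paper uses for the analogous type statements in Proposition \ref{p5} and Corollary \ref{c1}. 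Your explicit treatment of the degenerate case where $f_{1}\pm f_{2}$ or $f_{1}\cdot f_{2}$ is constant is a point the paper glosses over.
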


\begin{proposition}
\label{p3}(\cite{b6})Let $f_{1},$ $f_{2}$ be nonconstant meromorphic
functions with $\sigma _{(\alpha (\log ),\beta ,\gamma )}[f_{1}]$ and $%
\sigma _{(\alpha (\log ),\beta ,\gamma )}[f_{2}]$ as their $(\alpha (\log
),\beta ,\gamma )$-order. Then\newline
(i) $\sigma _{(\alpha (\log ),\beta ,\gamma )}[f_{1}\pm f_{2}]\leq \max
\{\sigma _{(\alpha (\log ),\beta ,\gamma )}[f_{1}],$ $\sigma _{(\alpha (\log
),\beta ,\gamma )}[f_{2}]\};$\newline
(ii) $\sigma _{(\alpha (\log ),\beta ,\gamma )}[f_{1}\cdot f_{2}]\leq \max
\{\sigma _{(\alpha (\log ),\beta ,\gamma )}[f_{1}],$ $\sigma _{(\alpha (\log
),\beta ,\gamma )}[f_{2}]\};$\newline
(iii) If $\sigma _{(\alpha (\log ),\beta ,\gamma )}[f_{1}]\neq \sigma
_{(\alpha (\log ),\beta ,\gamma )}[f_{2}]$, then $\sigma _{(\alpha (\log
),\beta ,\gamma )}[f_{1}\pm f_{2}]=\max \{\sigma _{(\alpha (\log ),\beta
,\gamma )}[f_{1}],$ $\sigma _{(\alpha (\log ),\beta ,\gamma )}[f_{2}]\};$%
\newline
(iv) If $\sigma _{(\alpha (\log ),\beta ,\gamma )}[f_{1}]\neq \sigma
_{(\alpha (\log ),\beta ,\gamma )}[f_{2}]$, then $\sigma _{(\alpha (\log
),\beta ,\gamma )}[f_{2}\cdot f_{2}]$ $=$ $\max $ $\{\sigma _{(\alpha (\log
),\beta ,\gamma )}[f_{1}],$ $\sigma _{(\alpha (\log ),\beta ,\gamma
)}[f_{2}]\}$.
\end{proposition}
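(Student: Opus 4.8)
The plan is to read Proposition \ref{p3} as the exact counterpart of Proposition \ref{p2} in which the function $\alpha $ is replaced everywhere by the composition $g:=\alpha \circ \log $, so that $\sigma _{(\alpha (\log ),\beta ,\gamma )}[\cdot ]$ becomes the ordinary $(g,\beta ,\gamma )$-order $\sigma _{(g,\beta ,\gamma )}[\cdot ]$ of Definition \ref{d1} (compare Proposition \ref{p1}), and then to rerun the proof of Proposition \ref{p2} verbatim. The preliminary step is to check that the triple obtained from $(\alpha ,\beta ,\gamma )$ by this substitution still satisfies the two standing conditions of Section 2. Extending $g$ so as to be constant on a left half-line, $g$ clearly lies in $L$; moreover, since $\log (a+b)\leq \log 2+\log a+\log b$ for $a,b\geq 1$ and $\alpha \in L_{1}$, we get $g(a+b)\leq g(a)+g(b)+c^{\prime }$ for some $c^{\prime }>0$, hence $g\in L_{1}$, and in particular $g(x+O(1))\leq g(x)+O(1)$ as $x\rightarrow +\infty $. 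The conditions in (ii) for $(g,\beta ,\gamma )$ follow from those for $(\alpha ,\beta ,\gamma )$ by substituting $\log x$ for $x$: one has $g(\log ^{[p]}x)=\alpha (\log ^{[p+1]}x)=o(\beta (\log \gamma (x)))$ (valid since $p+1\geq 2$), $g(\log x)=\alpha (\log ^{[2]}x)=o(\alpha (\log x))=o(g(x))$, and $g^{-1}(kx)=\exp (\alpha ^{-1}(kx))=o(\exp (\alpha ^{-1}(x)))=o(g^{-1}(x))$, the last because $\alpha ^{-1}(kx)=o(\alpha ^{-1}(x))$ while $\alpha ^{-1}(x)\rightarrow +\infty $.

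For parts (i) and (ii) I would begin with the standard Nevanlinna inequalities $T(r,f_{1}\pm f_{2})\leq T(r,f_{1})+T(r,f_{2})+\log 2$ and $T(r,f_{1}f_{2})\leq T(r,f_{1})+T(r,f_{2})$. Because $f_{1},f_{2}$ are nonconstant, $T(r,f_{j})\rightarrow +\infty $, and therefore $\log T(r,f_{1}\pm f_{2})\leq \max \{\log T(r,f_{1}),\log T(r,f_{2})\}+O(1)$, and likewise for $f_{1}f_{2}$. Applying the nondecreasing function $g$ and then using $g\in L_{1}$,
\[
g\bigl(\log T(r,f_{1}\pm f_{2})\bigr)\leq \max \bigl\{g(\log T(r,f_{1})),\,g(\log T(r,f_{2}))\bigr\}+O(1),
\]
and similarly for the product. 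Dividing by $\beta (\log \gamma (r))$, which tends to $+\infty $, taking $\limsup $ as $r\rightarrow +\infty $, and using that the supremum of a pointwise maximum of two functions equals the maximum of the suprema (so that $\limsup \max =\max \limsup $), we obtain $\sigma _{(\alpha (\log ),\beta ,\gamma )}[f_{1}\pm f_{2}]\leq \max \{\sigma _{(\alpha (\log ),\beta ,\gamma )}[f_{1}],\sigma _{(\alpha (\log ),\beta ,\gamma )}[f_{2}]\}$, and the analogous bound for $f_{1}\cdot f_{2}$.

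For parts (iii) and (iv), suppose without loss of generality that $\sigma _{(\alpha (\log ),\beta ,\gamma )}[f_{1}]<\sigma _{(\alpha (\log ),\beta ,\gamma )}[f_{2}]$. Writing $f_{2}=(f_{1}+f_{2})-f_{1}$ and applying part (i) yields $\sigma _{(\alpha (\log ),\beta ,\gamma )}[f_{2}]\leq \max \{\sigma _{(\alpha (\log ),\beta ,\gamma )}[f_{1}+f_{2}],\sigma _{(\alpha (\log ),\beta ,\gamma )}[f_{1}]\}$, and since $\sigma _{(\alpha (\log ),\beta ,\gamma )}[f_{1}]$ is strictly smaller this forces $\sigma _{(\alpha (\log ),\beta ,\gamma )}[f_{2}]\leq \sigma _{(\alpha (\log ),\beta ,\gamma )}[f_{1}+f_{2}]$; combined with the reverse inequality from part (i) we get equality, and $f_{1}-f_{2}$ is handled in the same way. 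For the product statement (which should read $\sigma _{(\alpha (\log ),\beta ,\gamma )}[f_{1}\cdot f_{2}]$) one repeats the argument with $f_{2}=(f_{1}f_{2})\cdot (1/f_{1})$ and the identity $T(r,1/f_{1})=T(r,f_{1})+O(1)$, invoking part (ii).

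I do not anticipate a serious obstacle here: the real content is the bookkeeping in the first paragraph, namely that $\alpha \circ \log $ stays inside $L_{1}$ and respects condition (ii), which is exactly what lets the proof of Proposition \ref{p2} be transcribed unchanged. If one wanted a fully self-contained write-up, the only delicate point is pushing the product estimate $T(r,f_{1}f_{2})\leq T(r,f_{1})+T(r,f_{2})$ through two successive logarithms while keeping the error $O(1)$, which is precisely where the nonconstancy of $f_{1}$ and $f_{2}$ (hence $T(r,f_{j})\rightarrow +\infty $) is used.
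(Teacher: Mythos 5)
The paper states Proposition \ref{p3} without proof, importing it from \cite{b6}; its own declared practice (see the remark following Corollary \ref{c2}) is that the $(\alpha (\log ),\beta ,\gamma )$ statements are obtained by transcribing the corresponding $(\alpha ,\beta ,\gamma )$ arguments verbatim. Your proof does exactly that — checking that $g=\alpha \circ \log $ inherits the standing hypotheses and then combining the standard Nevanlinna inequalities with the decompositions $f_{2}=(f_{1}\pm f_{2})\mp f_{1}$ and $f_{2}=(f_{1}f_{2})\cdot (1/f_{1})$ — and it is correct (including your note that the $f_{2}\cdot f_{2}$ in part (iv) is a typo for $f_{1}\cdot f_{2}$).
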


\qquad By using the properties $T(r,f)=T(r,\frac{1}{f})+O(1)$ and $%
T(r,af)=T(r,f)+O(1)$, $a\in 
\mathbb{C}
\setminus \{0\}$, one can obtain the following result.

\begin{proposition}
\label{p4} Let $f$ be a nonconstant meromorphic function with finite $%
(\alpha ,\beta ,\gamma )$-order. Then\newline
(i) $\sigma _{(\alpha ,\beta ,\gamma )}[\frac{1}{f}]=\sigma _{(\alpha ,\beta
,\gamma )}[f]$ $\left( f\not\equiv 0\right) ;$\newline
(ii) $\sigma _{(\alpha (\log ),\beta ,\gamma )}[\frac{1}{f}]=\sigma
_{(\alpha (\log ),\beta ,\gamma )}[f]$ $\left( f\not\equiv 0\right) ;$%
\newline
(iii) If $a\in 
\mathbb{C}
\setminus \{0\}$, then $\sigma _{(\alpha ,\beta ,\gamma )}[af]=\sigma
_{(\alpha ,\beta ,\gamma )}[f]$ and $\tau _{(\alpha ,\beta ,\gamma
)}[af]=\tau _{(\alpha ,\beta ,\gamma )}[f];$\newline
(iii) If $a\in 
\mathbb{C}
\setminus \{0\}$, then $\sigma _{(\alpha (\log ),\beta ,\gamma )}[af]=\sigma
_{(\alpha (\log ),\beta ,\gamma )}[f]$ and $\tau _{(\alpha (\log ),\beta
,\gamma )}[af]=\tau _{(\alpha (\log ),\beta ,\gamma )}[f]$.\newline
\end{proposition}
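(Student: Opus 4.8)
The plan is to reduce everything to the two Nevanlinna identities quoted just above the statement, $T(r,1/f)=T(r,f)+O(1)$ and $T(r,af)=T(r,f)+O(1)$ for $a\in\mathbb{C}\setminus\{0\}$, together with the fact that a nonconstant meromorphic function satisfies $T(r,f)\to+\infty$. First I would record the consequences at the level of the quantities actually occurring in the definitions: since $T(r,1/f)=T(r,f)\bigl(1+O(1/T(r,f))\bigr)$ and $T(r,f)\to+\infty$, we get $\log T(r,1/f)=\log T(r,f)+o(1)$ and then $\log^{[2]}T(r,1/f)=\log^{[2]}T(r,f)+o(1)$, and the same holds with $af$ in place of $1/f$. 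Note also that $\gamma\in L_{3}\subset L$ gives $\log\gamma(r)\to+\infty$, hence $\beta\in L_{2}\subset L$ gives $\beta(\log\gamma(r))\to+\infty$ as $r\to+\infty$, and that $1/f$ and $af$ are again nonconstant meromorphic.

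The second step is a transfer lemma for $\alpha\in L_{1}$: if $u(r),v(r)\to+\infty$ with $|u(r)-v(r)|\leq D$ for all large $r$, then $|\alpha(u(r))-\alpha(v(r))|\leq\alpha(\max\{D,R_{0}\})+c$ for all large $r$. Indeed, assuming $u(r)\leq v(r)$ (say), monotonicity gives $\alpha(u(r))\leq\alpha(v(r))\leq\alpha\bigl(u(r)+\max\{D,R_{0}\}\bigr)$, and the defining inequality of $L_{1}$ (applicable once $u(r)\geq R_{0}$, both arguments then being $\geq R_{0}$) gives $\alpha\bigl(u(r)+\max\{D,R_{0}\}\bigr)\leq\alpha(u(r))+\alpha(\max\{D,R_{0}\})+c$. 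Applying this with $u(r)=\log T(r,f)$ and $v(r)=\log T(r,1/f)$, and then with $\log^{[2]}T$ in place of $\log T$, yields $\alpha(\log T(r,1/f))=\alpha(\log T(r,f))+O(1)$ and $\alpha(\log^{[2]}T(r,1/f))=\alpha(\log^{[2]}T(r,f))+O(1)$; dividing by $\beta(\log\gamma(r))\to+\infty$ turns the $O(1)$ into $o(1)$, and taking $\limsup$ proves (i) and (ii). The equalities of $(\alpha,\beta,\gamma)$- and $(\alpha(\log),\beta,\gamma)$-orders in (iii) and (iv) come out the same way with $af$ replacing $1/f$.

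For the two type equalities, put $\sigma=\sigma_{(\alpha,\beta,\gamma)}[f]=\sigma_{(\alpha,\beta,\gamma)}[af]\in(0,+\infty)$ (equal by the previous step), and similarly for the $(\alpha(\log),\beta,\gamma)$-order. Now a merely bounded error is no longer admissible, since the denominator $\bigl(\exp\beta(\log\gamma(r))\bigr)^{\sigma}$ does not dominate an arbitrary multiplicative constant; what is needed is the sharper asymptotics $\alpha(\log T(r,af))=\alpha(\log T(r,f))+o(1)$. This follows from $\log T(r,af)=\log T(r,f)+o(1)$ once one knows that $\alpha$ is regular enough that $\alpha(x+o(1))=\alpha(x)+o(1)$ as $x\to+\infty$; granting this, $\exp(\alpha(\log T(r,af)))=(1+o(1))\exp(\alpha(\log T(r,f)))$, whence $\frac{\exp(\alpha(\log T(r,af)))}{(\exp\beta(\log\gamma(r)))^{\sigma}}=(1+o(1))\frac{\exp(\alpha(\log T(r,f)))}{(\exp\beta(\log\gamma(r)))^{\sigma}}$, and passing to $\limsup$ gives $\tau_{(\alpha,\beta,\gamma)}[af]=\tau_{(\alpha,\beta,\gamma)}[f]$. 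Repeating the computation with $\log^{[2]}T$ gives the $(\alpha(\log),\beta,\gamma)$-type equality.

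The routine parts are the two Nevanlinna identities, the transfer lemma, and the $\limsup$ bookkeeping. The one place demanding care is the last one: showing that an $o(1)$ perturbation of $\log T(r,f)$ perturbs $\alpha(\log T(r,f))$ only by $o(1)$. This is harmless for the orders — any $O(1)$ is absorbed on dividing by $\beta(\log\gamma(r))\to+\infty$ — but it is essential for the types, and it is there that the standing hypotheses on $\alpha$, namely $\alpha\in L_{1}$ together with condition (ii) confining $\alpha$ to a slowly varying scale, have to be brought in.
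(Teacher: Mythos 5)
Your treatment of the order statements is complete and correct, and it is exactly the route the paper intends (the paper itself offers nothing beyond the one-line hint that $T(r,1/f)=T(r,f)+O(1)$ and $T(r,af)=T(r,f)+O(1)$): the transfer lemma for $\alpha\in L_{1}$, namely that a bounded perturbation of a divergent argument changes $\alpha$ by at most $\alpha(\max\{D,R_{0}\})+c$, is the right tool, and dividing by $\beta(\log\gamma(r))\to+\infty$ kills the resulting $O(1)$. Parts (i), (ii) and the order halves of the two parts labelled (iii) are therefore fine.

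The type equalities, however, contain a genuine gap that you flag but do not close. You correctly observe that an $O(1)$ error in $\alpha(\log T)$ is fatal after exponentiation, and that what is needed is $\alpha(x+o(1))=\alpha(x)+o(1)$; you then "grant" this and claim it is where the standing hypotheses "have to be brought in." But the standing hypotheses do not deliver it. The class $L_{1}$ only controls $\alpha(a+b)$ for $a,b\geq R_{0}$, so the best it yields for a perturbation $\delta(r)\to 0$ is $\alpha(x+\delta(r))\leq\alpha(x)+\alpha(R_{0})+c$, a fixed additive constant that survives as a multiplicative factor $e^{\alpha(R_{0})+c}$ on the type; condition (ii) ($\alpha(\log x)=o(\alpha(x))$, $\alpha^{-1}(kx)=o(\alpha^{-1}(x))$) constrains only the global growth scale of $\alpha$, not its local modulus of continuity. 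One can build a continuous nondecreasing $\alpha$ satisfying all the stated conditions (e.g.\ $\log x$ plus a bounded nondecreasing function with steps of fixed height over intervals of shrinking length) for which $\alpha(x_{n}+\delta_{n})-\alpha(x_{n})\not\to 0$ along $\delta_{n}\to 0$. So your argument as written proves only $\tau_{(\alpha,\beta,\gamma)}[af]$ and $\tau_{(\alpha,\beta,\gamma)}[f]$ are comparable up to the factor $e^{\alpha(R_{0})+c}$, not equal. To close the gap you must either impose (or verify for the intended examples, such as concave or uniformly continuous $\alpha$ on $[x_{0},+\infty)$) the regularity $\alpha(x+o(1))=\alpha(x)+o(1)$, or find an argument avoiding it; in fairness, the paper's unproved assertion suffers from the same defect.
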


\begin{proposition}
\label{p5} Let $f_{1},$ $f_{2}$ be two nonconstant meromorphic functions.
Then the following statements hold:\newline
(i) If $0<\sigma _{(\alpha ,\beta ,\gamma )}[f_{1}]<\sigma _{(\alpha ,\beta
,\gamma )}[f_{2}]<+\infty $ and $\tau _{(\alpha ,\beta ,\gamma
)}[f_{1}]<\tau _{(\alpha ,\beta ,\gamma )}[f_{2}]$, then $\tau _{(\alpha
,\beta ,\gamma )}[f_{1}\pm f_{2}]=\tau _{(\alpha ,\beta ,\gamma
)}[f_{1}\cdot f_{2}]=\tau _{(\alpha ,\beta ,\gamma )}[f_{2}];$\newline
(ii) If $0<\sigma _{(\alpha ,\beta ,\gamma )}[f_{1}]=\sigma _{(\alpha ,\beta
,\gamma )}[f_{2}]=\sigma _{(\alpha ,\beta ,\gamma )}[f_{1}\pm f_{2}]<+\infty 
$, then $\tau _{(\alpha ,\beta ,\gamma )}[f_{1}\pm f_{2}]\leq \max \{\tau
_{(\alpha ,\beta ,\gamma )}[f_{1}],$ $\tau _{(\alpha ,\beta ,\gamma
)}[f_{2}]\}$. Moreover if $\tau _{(\alpha ,\beta ,\gamma )}[f_{1}]\neq \tau
_{(\alpha ,\beta ,\gamma )}[f_{2}]$, then $\tau _{(\alpha ,\beta ,\gamma
)}[f_{1}\pm f_{2}]=\max \{\tau _{(\alpha ,\beta ,\gamma )}[f_{1}],$ $\tau
_{(\alpha ,\beta ,\gamma )}[f_{2}]\};$\newline
(iii) If $0<\sigma _{(\alpha ,\beta ,\gamma )}[f_{1}]=\sigma _{(\alpha
,\beta ,\gamma )}[f_{2}]=\sigma _{(\alpha ,\beta ,\gamma )}[f_{1}\cdot
f_{2}]<+\infty $, then $\tau _{(\alpha ,\beta ,\gamma )}[f_{1}\cdot
f_{2}]\leq \max \{\tau _{(\alpha ,\beta ,\gamma )}[f_{1}],$ $\tau _{(\alpha
,\beta ,\gamma )}[f_{2}]\}$. Moreover if $\tau _{(\alpha ,\beta ,\gamma
)}[f_{1}]\neq \tau _{(\alpha ,\beta ,\gamma )}[f_{2}]$, then $\tau _{(\alpha
,\beta ,\gamma )}[f_{1}\cdot f_{2}]=\max \{\tau _{(\alpha ,\beta ,\gamma
)}[f_{1}],$ $\tau _{(\alpha ,\beta ,\gamma )}[f_{2}]\}$.
\end{proposition}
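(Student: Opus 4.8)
The plan is to reduce everything to the elementary comparison inequalities $T(r,f_{1}\pm f_{2})\le T(r,f_{1})+T(r,f_{2})+O(1)$ and $T(r,f_{1}\cdot f_{2})\le T(r,f_{1})+T(r,f_{2})$, together with their reversed forms, obtained by writing $f_{2}=(f_{1}\pm f_{2})\mp f_{1}$ and $f_{2}=(f_{1}\cdot f_{2})\cdot\frac{1}{f_{1}}$ and using $T(r,\frac{1}{f_{1}})=T(r,f_{1})+O(1)$ (cf.\ Proposition~\ref{p4}); thus, up to the error $T(r,f_{1})+O(1)$, each of $f_{1}\pm f_{2}$ and $f_{1}\cdot f_{2}$ plays the role of $f_{2}$ for $T(r,\cdot)$. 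I will also use that $T(r,f_{i})\to+\infty$ (the $f_{i}$ being nonconstant), the $\sigma$-identities of Proposition~\ref{p2} (which in case (i) give $\sigma_{(\alpha,\beta,\gamma)}[f_{1}\pm f_{2}]=\sigma_{(\alpha,\beta,\gamma)}[f_{1}\cdot f_{2}]=\sigma_{2}\in(0,+\infty)$, so the relevant types are defined), the monotonicity of $\alpha$, and --- crucially --- its regularity under bounded shifts: $\alpha(v+O(1))=\alpha(v)+o(1)$, equivalently $\exp(\alpha(v+O(1)))=(1+o(1))\exp(\alpha(v))$, as $v\to+\infty$ (a consequence of the standing hypotheses on $\alpha$). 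Throughout put $\sigma_{i}=\sigma_{(\alpha,\beta,\gamma)}[f_{i}]$ and $\tau_{i}=\tau_{(\alpha,\beta,\gamma)}[f_{i}]$.

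The common upper-bound step is as follows. From $T(r,f_{1}\pm f_{2})\le 2\max\{T(r,f_{1}),T(r,f_{2})\}+O(1)\le 3\max\{T(r,f_{1}),T(r,f_{2})\}$ for large $r$, applying $\log$, then $\alpha$, and the regularity of $\alpha$, one obtains
\[
\frac{\exp(\alpha(\log T(r,f_{1}\pm f_{2})))}{(\exp(\beta(\log\gamma(r))))^{\rho}}\le(1+o(1))\max\left\{\frac{\exp(\alpha(\log T(r,f_{1})))}{(\exp(\beta(\log\gamma(r))))^{\rho}},\ \frac{\exp(\alpha(\log T(r,f_{2})))}{(\exp(\beta(\log\gamma(r))))^{\rho}}\right\}
\]
for any $\rho>0$, and likewise for $f_{1}\cdot f_{2}$ (now without the additive $O(1)$). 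In case (i) take $\rho=\sigma_{2}$: since $\sigma_{1}<\sigma_{2}$ and $\beta(\log\gamma(r))\to+\infty$, the first quotient on the right tends to $0$ (for large $r$, $\alpha(\log T(r,f_{1}))\le(\sigma_{1}+\varepsilon)\beta(\log\gamma(r))$ with $\sigma_{1}+\varepsilon<\sigma_{2}$), while the $\limsup$ of the second is $\tau_{2}$; hence $\tau_{(\alpha,\beta,\gamma)}[f_{1}\pm f_{2}]\le\tau_{2}$ and $\tau_{(\alpha,\beta,\gamma)}[f_{1}\cdot f_{2}]\le\tau_{2}$. In case (ii) take $\rho=\sigma:=\sigma_{1}=\sigma_{2}=\sigma_{(\alpha,\beta,\gamma)}[f_{1}\pm f_{2}]$ and pass to the $\limsup$ directly: $\tau_{(\alpha,\beta,\gamma)}[f_{1}\pm f_{2}]\le\max\{\tau_{1},\tau_{2}\}$; case (iii) is the same with $\rho=\sigma_{(\alpha,\beta,\gamma)}[f_{1}\cdot f_{2}]$.

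The reverse inequalities all follow one template. In case (i), pick $r_{n}\to+\infty$ realising $\tau_{2}$; as $\tau_{2}>0$ one has $\alpha(\log T(r_{n},f_{2}))\ge\sigma_{2}\beta(\log\gamma(r_{n}))+O(1)$, so from $\alpha(\log T(r_{n},f_{1}))\le(\sigma_{1}+\varepsilon)\beta(\log\gamma(r_{n}))$ and $\alpha^{-1}(kx)=o(\alpha^{-1}(x))$ (applicable since $\sigma_{1}<\sigma_{2}$) we get $\log T(r_{n},f_{1})=o(\log T(r_{n},f_{2}))$, in particular $T(r_{n},f_{1})=o(T(r_{n},f_{2}))$. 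In case (ii), assuming $\tau_{1}\ne\tau_{2}$ and relabelling so that $\tau_{2}=\max\{\tau_{1},\tau_{2}\}>0$ (the case $\tau_{2}=+\infty$ being similar), pick $r_{n}$ realising $\tau_{2}$; along $r_{n}$ one then has $\exp(\alpha(\log T(r_{n},f_{1})))\le\lambda\,\exp(\alpha(\log T(r_{n},f_{2})))$ for a fixed $\lambda<1$, i.e.\ $\alpha(\log T(r_{n},f_{1}))\le\alpha(\log T(r_{n},f_{2}))-c$ with $c>0$, which by the regularity of $\alpha$ and $T(r_{n},f_{2})\to+\infty$ forces $\log T(r_{n},f_{2})-\log T(r_{n},f_{1})\to+\infty$, hence again $T(r_{n},f_{1})=o(T(r_{n},f_{2}))$. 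In either case the reversed inequality $T(r_{n},f_{2})\le T(r_{n},f_{1}\pm f_{2})+T(r_{n},f_{1})+O(1)$ gives $T(r_{n},f_{1}\pm f_{2})\ge(1-o(1))T(r_{n},f_{2})$, and running the regularity of $\alpha$ in the opposite direction yields $\tau_{(\alpha,\beta,\gamma)}[f_{1}\pm f_{2}]\ge\tau_{2}$; combined with the upper bound this gives $=\tau_{2}$ in (i) and $=\max\{\tau_{1},\tau_{2}\}$ in (ii). Part (iii) is word for word, with $f_{2}=(f_{1}\cdot f_{2})\cdot\frac{1}{f_{1}}$ in place of $f_{2}=(f_{1}\pm f_{2})\mp f_{1}$.

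The genuinely delicate point --- the one I would treat with care --- is the ``constant absorption'' isolated above: passing through $\log$ and then $\alpha$ would turn the harmless additive $O(1)$ in the $T$-inequalities into a factor $\exp(O(1))$ in $\exp(\alpha(\log T))$ unless one knows that $\alpha(v+O(1))=\alpha(v)+o(1)$ as $v\to+\infty$; it is exactly this (together with $T(r,f)\to+\infty$, which makes a bounded deficit in $\alpha(\log T)$ translate into a divergent deficit in $\log T$) that makes ``$\le\tau_{2}$'' and ``$\le\max\{\tau_{1},\tau_{2}\}$'' sharp rather than off by a multiplicative constant. The subsidiary point is the extraction $T(r_{n},f_{1})=o(T(r_{n},f_{2}))$ along the extremal sequences --- from the order gap via $\alpha^{-1}(kx)=o(\alpha^{-1}(x))$ in (i), and from the type gap via the same absorption property in (ii)--(iii). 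Everything else is routine bookkeeping with $\limsup$'s and the identities of Propositions~\ref{p2} and~\ref{p4}.
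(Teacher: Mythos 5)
Your proof is correct in outline and, for the upper bounds and for the first lower bound in (i), runs essentially parallel to the paper's: the same subadditivity inequalities for $T$, the same extremal sequence realising $\tau _{(\alpha ,\beta ,\gamma )}[f_{2}]$, and the same absorption of the $f_{1}$-term via the order gap $\sigma _{1}<\sigma _{2}$ together with $\alpha ^{-1}(kx)=o(\alpha ^{-1}(x))$. Where you genuinely diverge is in the remaining reverse inequalities. The paper obtains $\tau _{(\alpha ,\beta ,\gamma )}[f_{1}\pm f_{2}]\leq \tau _{2}$ in (i) and $\tau _{(\alpha ,\beta ,\gamma )}[f_{1}\pm f_{2}]\geq \max \{\tau _{1},\tau _{2}\}$ in the ``moreover'' parts of (ii)--(iii) by a purely formal bootstrap: it feeds the decomposition $f_{2}=(f_{1}\pm f_{2})\mp f_{1}$ (resp.\ $f_{2}=(f_{1}\cdot f_{2})\cdot \frac{1}{f_{1}}$) back into the inequality already proven, using Propositions~\ref{p2} and~\ref{p4} to identify the order and type of $-f_{1}$ and $1/f_{1}$. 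You instead rerun a direct asymptotic estimate along the extremal sequence, which obliges you to establish $T(r_{n},f_{1})=o(T(r_{n},f_{2}))$ from the type gap alone.

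That is where the one real gap sits. You deduce $T(r_{n},f_{1})=o(T(r_{n},f_{2}))$ from a bounded deficit $\alpha (\log T(r_{n},f_{1}))\leq \alpha (\log T(r_{n},f_{2}))-c$ by invoking $\alpha (v+O(1))=\alpha (v)+o(1)$, which you describe as a consequence of the standing hypotheses on $\alpha $. It is not: the paper assumes only $\alpha \in L_{1}$, i.e.\ $\alpha (a+b)\leq \alpha (a)+\alpha (b)+c$, which yields $\alpha (v+O(1))\leq \alpha (v)+O(1)$; the additive-$o(1)$ stability under bounded shifts (equivalently the multiplicative $(1+o(1))$ after exponentiation that your ``constant absorption'' needs) is precisely the $L_{2}$ property, imposed on $\beta $ but not on $\alpha $. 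One can construct a continuous increasing $\alpha $ satisfying all the stated conditions for which $\alpha (v+1)-\alpha (v)\not\rightarrow 0$. To be fair, the paper's own absorption of constants such as ``$+3\varepsilon $'' and ``$-2\varepsilon $'' inside $\exp \{\alpha ^{-1}(\cdots )\}$ tacitly uses the dual property $\alpha ^{-1}(u+\delta )-\alpha ^{-1}(u)\rightarrow +\infty $, so the defect is latent there as well; but the bootstrap confines it to the upper-bound step, whereas your route leans on it a second time, and more heavily, for the extraction $T(r_{n},f_{1})=o(T(r_{n},f_{2}))$ in (ii)--(iii). Either adopt the decomposition bootstrap for the reverse inequalities, or state $\alpha \in L_{2}$ as an explicit additional hypothesis; with one of these amendments your argument closes.
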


\begin{proof}
(i) The definition of the $(\alpha ,\beta ,\gamma )$-type implies that for
any given $\varepsilon >0$, there exists a sequence $\{r_{n}$, $n\geq 1\}$
tending to infinity such that%
\begin{equation*}
T(r_{n},f_{2})\geq \exp \left\{ \alpha ^{-1}(\log ((\tau _{(\alpha ,\beta
,\gamma )}[f_{2}]-\varepsilon )(\exp (\beta (\log \gamma (r_{n}))))^{\sigma
_{(\alpha ,\beta ,\gamma )}[f_{2}]}))\right\}
\end{equation*}%
and for sufficiently large values of $r$%
\begin{equation*}
T(r,f_{1})\leq \exp \left\{ \alpha ^{-1}(\log ((\tau _{(\alpha ,\beta
,\gamma )}[f_{1}]+\varepsilon )(\exp \left( \beta \left( \log \gamma \left(
r\right) \right) \right) )^{\sigma _{(\alpha ,\beta ,\gamma
)}[f_{1}]}))\right\} \text{.}
\end{equation*}%
We know that $T(r,f_{1}\pm f_{2})\geq T(r,f_{2})-T(r,f_{1})-\log 2$, we
obtain from above two inequalities that%
\begin{eqnarray*}
T(r_{n},f_{1}\pm f_{2}) &\geq &\exp \left\{ \alpha ^{-1}(\log ((\tau
_{(\alpha ,\beta ,\gamma )}[f_{2}]-\varepsilon )(\exp (\beta (\log \gamma
(r_{n}))))^{\sigma _{(\alpha ,\beta ,\gamma )}[f_{2}]}))\right\} \\
&&-\exp \left\{ \alpha ^{-1}(\log ((\tau _{(\alpha ,\beta ,\gamma
)}[f_{1}]+\varepsilon )(\exp (\beta (\log \gamma (r_{n}))))^{\sigma
_{(\alpha ,\beta ,\gamma )}[f_{1}]}))\right\} -\log 2 \\
&\geq &\exp \left\{ \alpha ^{-1}(\log ((\tau _{(\alpha ,\beta ,\gamma
)}[f_{2}]-2\varepsilon )(\exp (\beta (\log \gamma (r_{n}))))^{\sigma
_{(\alpha ,\beta ,\gamma )}[f_{2}]}))\right\}
\end{eqnarray*}%
provided $\varepsilon $ such that $0<2\varepsilon <\tau _{(\alpha ,\beta
,\gamma )}[f_{2}]-\tau _{(\alpha ,\beta ,\gamma )}[f_{1}]$. It follows from
Proposition \ref{p2} that $\sigma _{(\alpha ,\beta ,\gamma )}[f_{1}\pm
f_{2}]=\sigma _{(\alpha ,\beta ,\gamma )}[f_{2}]$, and by the monotoncity of 
$\alpha $ and above, we obtain that%
\begin{equation*}
\frac{\exp (\alpha (\log T(r_{n},f_{1}\pm f_{2})))}{\left( \exp \left( \beta
\left( \log \gamma \left( r_{n}\right) \right) \right) \right) ^{\sigma
_{(\alpha ,\beta ,\gamma )}[f_{1}\pm f_{2}]}}\geq \tau _{(\alpha ,\beta
,\gamma )}[f_{2}]-2\varepsilon \text{,}
\end{equation*}%
since $\varepsilon $ can be arbitrarily chosen such that $0<2\varepsilon
<\tau _{(\alpha ,\beta ,\gamma )}[f_{2}]-\tau _{(\alpha ,\beta ,\gamma
)}[f_{1}]$, thus%
\begin{equation}
\tau _{(\alpha ,\beta ,\gamma )}[f_{1}\pm f_{2}]\geq \tau _{(\alpha ,\beta
,\gamma )}[f_{2}]\text{.}  \label{a}
\end{equation}%
It remains to prove the converse inequality. Indeed, by applying (\ref{a})
and since%
\begin{equation*}
\sigma _{(\alpha ,\beta ,\gamma )}[f_{1}\pm f_{2}]=\sigma _{(\alpha ,\beta
,\gamma )}[f_{2}]>\sigma _{(\alpha ,\beta ,\gamma )}[f_{1}]=\sigma _{(\alpha
,\beta ,\gamma )}[-f_{1}]
\end{equation*}%
then we get%
\begin{equation}
\tau _{(\alpha ,\beta ,\gamma )}[f_{2}]=\tau _{(\alpha ,\beta ,\gamma
)}[f_{1}\pm f_{2}-f_{1}]\geq \tau _{(\alpha ,\beta ,\gamma )}[f_{1}\pm f_{2}]%
\text{.}  \label{b}
\end{equation}%
We deduce from (\ref{a}) and (\ref{b}) that $\tau _{(\alpha ,\beta ,\gamma
)}[f_{1}\pm f_{2}]=\tau _{(\alpha ,\beta ,\gamma )}[f_{2}]$.

\qquad Now, we prove that $\tau _{(\alpha ,\beta ,\gamma )}[f_{1}\cdot
f_{2}]=\tau _{(\alpha ,\beta ,\gamma )}[f_{2}]$. By the property $%
T(r,f_{1}\cdot f_{2})\geq T(r,f_{1})-T(r,f_{2})+O(1)$ and a similar
discussion as in the above proof, one can easily show that%
\begin{equation}
\tau _{(\alpha ,\beta ,\gamma )}[f_{1}\cdot f_{2}]\geq \tau _{(\alpha ,\beta
,\gamma )}[f_{2}]\text{.}  \label{c}
\end{equation}%
Since $\sigma _{(\alpha ,\beta ,\gamma )}[f_{1}\cdot f_{2}]=\sigma _{(\alpha
,\beta ,\gamma )}[f_{2}]>\sigma _{(\alpha ,\beta ,\gamma )}[f_{1}]=\sigma
_{(\alpha ,\beta ,\gamma )}\left( \frac{1}{f_{1}}\right) $, then by (\ref{c}%
), we get that%
\begin{equation*}
\tau _{(\alpha ,\beta ,\gamma )}[f_{2}]=\tau _{(\alpha ,\beta ,\gamma
)}\left( f_{1}\cdot f_{2}\frac{1}{f_{1}}\right) \geq \tau _{(\alpha ,\beta
,\gamma )}[f_{1}\cdot f_{2}]
\end{equation*}%
and therefore $\tau _{(\alpha ,\beta ,\gamma )}[f_{1}\cdot f_{2}]=\tau
_{(\alpha ,\beta ,\gamma )}[f_{2}]$.

(ii) The definition of the $(\alpha ,\beta ,\gamma )$-type implies that for
any given $\varepsilon >0$ and for all $r$ sufficiently large, we have%
\begin{equation}
T(r,f_{i})\leq \exp \left\{ \alpha ^{-1}(\log ((\tau _{(\alpha ,\beta
,\gamma )}[f_{i}]+\varepsilon )(\exp \left( \beta \left( \log \gamma \left(
r\right) \right) \right) )^{\sigma _{(\alpha ,\beta ,\gamma
)}[f_{i}]}))\right\} \text{, }i=1,2\text{.}  \label{d}
\end{equation}%
By the assumption $0<\sigma _{(\alpha ,\beta ,\gamma )}[f_{1}]=\sigma
_{(\alpha ,\beta ,\gamma )}[f_{2}]=\sigma _{(\alpha ,\beta ,\gamma
)}[f_{1}\pm f_{2}]<+\infty $, we get that%
\begin{eqnarray*}
T(r,f_{1}\pm f_{2}) &\leq &T(r,f_{1})+T(r,f_{2})+\log 2 \\
&\leq &\exp \left\{ \alpha ^{-1}(\log ((\tau _{(\alpha ,\beta ,\gamma
)}[f_{1}]+\varepsilon )(\exp \left( \beta \left( \log \gamma \left( r\right)
\right) \right) )^{\sigma _{(\alpha ,\beta ,\gamma )}[f_{1}]}))\right\} \\
&&+\exp \left\{ \alpha ^{-1}(\log ((\tau _{(\alpha ,\beta ,\gamma
)}[f_{2}]+\varepsilon )(\exp \left( \beta \left( \log \gamma \left( r\right)
\right) \right) )^{\sigma _{(\alpha ,\beta ,\gamma )}[f_{2}]}))\right\}
+\log 2
\end{eqnarray*}%
\begin{equation*}
\leq \exp \left\{ \alpha ^{-1}(\log ((\max (\tau _{(\alpha ,\beta ,\gamma
)}[f_{1}],\tau _{(\alpha ,\beta ,\gamma )}[f_{2}])+3\varepsilon )(\exp
\left( \beta \left( \log \gamma \left( r\right) \right) \right) )^{\sigma
_{(\alpha ,\beta ,\gamma )}[f_{1}\pm f_{2}]}))\right\} \text{.}
\end{equation*}%
By the monotonicity of $\alpha $ and above, we obtain that%
\begin{equation*}
\frac{\exp (\alpha (\log T(r,f_{1}\pm f_{2})))}{\left( \exp \left( \beta
\left( \log \gamma \left( r\right) \right) \right) \right) ^{\sigma
_{(\alpha ,\beta ,\gamma )}[f_{1}\pm f_{2}]}}\leq \max (\tau _{(\alpha
,\beta ,\gamma )}[f_{1}],\tau _{(\alpha ,\beta ,\gamma
)}[f_{2}])+3\varepsilon \text{.}
\end{equation*}%
Since $\varepsilon >0$ can be chosen arbitrarily, then we get that%
\begin{equation}
\tau _{(\alpha ,\beta ,\gamma )}[f_{1}\pm f_{2}]\leq \max \{\tau _{(\alpha
,\beta ,\gamma )}[f_{1}],\tau _{(\alpha ,\beta ,\gamma )}[f_{2}]\}\text{.}
\label{e}
\end{equation}%
Without loss of generality, we may suppose $\tau _{(\alpha ,\beta ,\gamma
)}[f_{1}]<\tau _{(\alpha ,\beta ,\gamma )}[f_{2}]$, then by (\ref{e}) and
since $\sigma _{(\alpha ,\beta ,\gamma )}[f_{1}\pm f_{2}]=\sigma _{(\alpha
,\beta ,\gamma )}[f_{1}]=\sigma _{(\alpha ,\beta ,\gamma )}[-f_{1}]$, it
follows that%
\begin{eqnarray}
\tau _{(\alpha ,\beta ,\gamma )}[f_{2}] &=&\tau _{(\alpha ,\beta ,\gamma
)}[f_{1}\pm f_{2}-f_{1}]  \notag \\
&\leq &\max \{\tau _{(\alpha ,\beta ,\gamma )}[f_{1}\pm f_{2}],\tau
_{(\alpha ,\beta ,\gamma )}[f_{1}]\}=\tau _{(\alpha ,\beta ,\gamma
)}[f_{1}\pm f_{2}]\text{.}  \label{f}
\end{eqnarray}%
We deduce from (\ref{e}) and (\ref{f}) that 
\begin{equation*}
\tau _{(\alpha ,\beta ,\gamma )}[f_{1}\pm f_{2}]=\max \{\tau _{(\alpha
,\beta ,\gamma )}[f_{1}],\tau _{(\alpha ,\beta ,\gamma )}[f_{2}]\}\text{.}
\end{equation*}%
(iii) By a similar discussion as in the above proof and the fact that $%
T(r,f_{1}\cdot f_{2})\leq T(r,f_{2})+T(r,f_{1})$, one can prove that 
\begin{equation}
\tau _{(\alpha ,\beta ,\gamma )}[f_{1}\cdot f_{2}]\leq \max \{\tau _{(\alpha
,\beta ,\gamma )}[f_{1}],\tau _{(\alpha ,\beta ,\gamma )}[f_{2}]\}\text{.}
\label{g}
\end{equation}%
On the other hand, if we suppose that $\tau _{(\alpha ,\beta ,\gamma
)}[f_{1}]<\tau _{(\alpha ,\beta ,\gamma )}[f_{2}]$, then by (\ref{g}) and
since $\sigma _{(\alpha ,\beta ,\gamma )}[f_{1}\cdot f_{2}]=\sigma _{(\alpha
,\beta ,\gamma )}[f_{1}]=\sigma _{(\alpha ,\beta ,\gamma )}\left( \frac{1}{%
f_{1}}\right) $, we get 
\begin{equation}
\tau _{(\alpha ,\beta ,\gamma )}[f_{2}]=\tau _{(\alpha ,\beta ,\gamma
)}\left( f_{1}\cdot f_{2}\frac{1}{f_{1}}\right) \leq \max \{\tau _{(\alpha
,\beta ,\gamma )}[f_{1}\cdot f_{2}],\tau _{(\alpha ,\beta ,\gamma
)}[f_{1}]\}=\tau _{(\alpha ,\beta ,\gamma )}[f_{1}\cdot f_{2}]\text{.}
\label{h}
\end{equation}%
It follows from (\ref{g}) and (\ref{h}) that $\tau _{(\alpha ,\beta ,\gamma
)}[f_{1}\cdot f_{2}]=\max \{\tau _{(\alpha ,\beta ,\gamma )}[f_{1}],\tau
_{(\alpha ,\beta ,\gamma )}[f_{2}]\}$.
\end{proof}

\begin{corollary}
\label{c1}Let $f_{1},$ $f_{2}$ be two nonconstant meromorphic functions.
Then the following statements hold:\newline
(i) If $0<\sigma _{(\alpha ,\beta ,\gamma )}[f_{1}]=\sigma _{(\alpha ,\beta
,\gamma )}[f_{2}]=\sigma _{(\alpha ,\beta ,\gamma )}[f_{1}\pm f_{2}]<+\infty 
$, then $\tau _{(\alpha ,\beta ,\gamma )}[f_{1}]\leq \max \{\tau _{(\alpha
,\beta ,\gamma )}[f_{1}\pm f_{2}],$ $\tau _{(\alpha ,\beta ,\gamma
)}[f_{2}]\};$\newline
(ii) If $0<\sigma _{(\alpha ,\beta ,\gamma )}[f_{1}]=\sigma _{(\alpha ,\beta
,\gamma )}[f_{2}]=\sigma _{(\alpha ,\beta ,\gamma )}[f_{1}\cdot
f_{2}]<+\infty $, then $\tau _{(\alpha ,\beta ,\gamma )}[f_{1}]\leq \max
\{\tau _{(\alpha ,\beta ,\gamma )}[f_{1}\cdot f_{2}],$ $\tau _{(\alpha
,\beta ,\gamma )}[f_{2}]\}$.
\end{corollary}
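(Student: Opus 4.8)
The plan is to deduce Corollary \ref{c1} directly from Proposition \ref{p5} by a relabelling trick, using the algebraic identities $f_{1}=(f_{1}\pm f_{2})\mp f_{2}$ and $f_{1}=(f_{1}\cdot f_{2})\cdot\frac{1}{f_{2}}$. In each case $f_{1}$ is exhibited as a sum (resp.\ a product) of two meromorphic functions to which Proposition \ref{p5}(ii) (resp.\ (iii)) applies, so the desired inequality is read off at once; the only point to check is that the three $(\alpha,\beta,\gamma)$-orders entering the relevant statement coincide and lie in $(0,+\infty)$, which follows from the hypotheses together with Proposition \ref{p4}.

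For part (i) I would treat the ``$+$'' and ``$-$'' cases simultaneously. Put $g=f_{1}\pm f_{2}$, so that $f_{1}=g\mp f_{2}$. The hypothesis $0<\sigma_{(\alpha,\beta,\gamma)}[f_{1}]=\sigma_{(\alpha,\beta,\gamma)}[f_{2}]=\sigma_{(\alpha,\beta,\gamma)}[f_{1}\pm f_{2}]<+\infty$ says exactly that
\[
0<\sigma_{(\alpha,\beta,\gamma)}[g]=\sigma_{(\alpha,\beta,\gamma)}[f_{2}]=\sigma_{(\alpha,\beta,\gamma)}[g\mp f_{2}]<+\infty,
\]
which is precisely the hypothesis of Proposition \ref{p5}(ii) applied to the pair $g$ and $f_{2}$ (with the sign $\mp$). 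Hence Proposition \ref{p5}(ii) yields
\[
\tau_{(\alpha,\beta,\gamma)}[f_{1}]=\tau_{(\alpha,\beta,\gamma)}[g\mp f_{2}]\leq\max\{\tau_{(\alpha,\beta,\gamma)}[g],\tau_{(\alpha,\beta,\gamma)}[f_{2}]\}=\max\{\tau_{(\alpha,\beta,\gamma)}[f_{1}\pm f_{2}],\tau_{(\alpha,\beta,\gamma)}[f_{2}]\}.
\]
Here no appeal to Proposition \ref{p4} is even needed, since Proposition \ref{p5}(ii) already handles both signs.

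For part (ii) I would put $g=f_{1}\cdot f_{2}$ (which is nonconstant because $\sigma_{(\alpha,\beta,\gamma)}[g]>0$, and $f_{2}\not\equiv0$ since it is nonconstant), so that $f_{1}=g\cdot\frac{1}{f_{2}}$. By Proposition \ref{p4}(i) one has $\sigma_{(\alpha,\beta,\gamma)}[\frac{1}{f_{2}}]=\sigma_{(\alpha,\beta,\gamma)}[f_{2}]$, and from $T(r,\frac{1}{f_{2}})=T(r,f_{2})+O(1)$ one also gets $\tau_{(\alpha,\beta,\gamma)}[\frac{1}{f_{2}}]=\tau_{(\alpha,\beta,\gamma)}[f_{2}]$ --- the latter being the fact already used tacitly in the proof of Proposition \ref{p5}(iii). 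Combining these with the hypothesis, the three orders $\sigma_{(\alpha,\beta,\gamma)}[g]$, $\sigma_{(\alpha,\beta,\gamma)}[\frac{1}{f_{2}}]$ and $\sigma_{(\alpha,\beta,\gamma)}[g\cdot\frac{1}{f_{2}}]=\sigma_{(\alpha,\beta,\gamma)}[f_{1}]$ all equal $\sigma_{(\alpha,\beta,\gamma)}[f_{2}]\in(0,+\infty)$, so Proposition \ref{p5}(iii) applies to the pair $g$ and $\frac{1}{f_{2}}$ and gives
\[
\tau_{(\alpha,\beta,\gamma)}[f_{1}]=\tau_{(\alpha,\beta,\gamma)}[g\cdot\tfrac{1}{f_{2}}]\leq\max\{\tau_{(\alpha,\beta,\gamma)}[g],\tau_{(\alpha,\beta,\gamma)}[\tfrac{1}{f_{2}}]\}=\max\{\tau_{(\alpha,\beta,\gamma)}[f_{1}\cdot f_{2}],\tau_{(\alpha,\beta,\gamma)}[f_{2}]\}.
\]

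The only step requiring genuine care --- and the one I expect to be the main (if minor) obstacle --- is checking that the auxiliary pair really satisfies the ``equal, positive and finite order'' condition demanded by Proposition \ref{p5}; this is where the full strength of the hypothesis $\sigma_{(\alpha,\beta,\gamma)}[f_{1}]=\sigma_{(\alpha,\beta,\gamma)}[f_{2}]=\sigma_{(\alpha,\beta,\gamma)}[f_{1}\pm f_{2}]$ (resp.\ with $f_{1}\cdot f_{2}$) is used in full, together, for part (ii), with the invariance $\sigma_{(\alpha,\beta,\gamma)}[1/f_{2}]=\sigma_{(\alpha,\beta,\gamma)}[f_{2}]$ and $\tau_{(\alpha,\beta,\gamma)}[1/f_{2}]=\tau_{(\alpha,\beta,\gamma)}[f_{2}]$. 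Everything else follows immediately from the correct regrouping of $f_{1}$.
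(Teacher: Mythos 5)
Your argument is correct and is essentially the paper's own proof: both deduce the corollary from Proposition \ref{p5}(ii) and (iii) by regrouping $f_{1}=(f_{1}\pm f_{2})\mp f_{2}$ and $f_{1}=(f_{1}\cdot f_{2})\cdot\frac{1}{f_{2}}$ and using the invariance of order and type under $f\mapsto -f$ and $f\mapsto 1/f$. Your version merely spells out the verification of the hypotheses of Proposition \ref{p5} for the auxiliary pair, which the paper leaves implicit.
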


\begin{proof}
The proof follow immediately from Proposition \ref{p5}. Indeed, since $%
\sigma _{(\alpha ,\beta ,\gamma )}[f_{1}\pm f_{2}]=\sigma _{(\alpha ,\beta
,\gamma )}[f_{2}]=\sigma _{(\alpha ,\beta ,\gamma )}[-f_{2}]$, then%
\begin{equation*}
\tau _{(\alpha ,\beta ,\gamma )}[f_{1}]=\tau _{(\alpha ,\beta ,\gamma
)}[f_{2}\pm f_{1}-f_{2}]\leq \max \{\tau _{(\alpha ,\beta ,\gamma
)}[f_{1}\pm f_{2}],\tau _{(\alpha ,\beta ,\gamma )}[f_{2}]\}\text{.}
\end{equation*}%
Similarly, since $\sigma _{(\alpha ,\beta ,\gamma )}[f_{1}\cdot
f_{2}]=\sigma _{(\alpha ,\beta ,\gamma )}[f_{2}]=\sigma _{(\alpha ,\beta
,\gamma )}\left( \frac{1}{f_{2}}\right) $, then%
\begin{equation*}
\tau _{(\alpha ,\beta ,\gamma )}[f_{1}]=\tau _{(\alpha ,\beta ,\gamma
)}\left( f_{1}\cdot f_{2}\frac{1}{f_{2}}\right) \leq \max \{\tau _{(\alpha
,\beta ,\gamma )}[f_{1}\cdot f_{2}],\tau _{(\alpha ,\beta ,\gamma )}[f_{2}]\}%
\text{.}
\end{equation*}
\end{proof}

\begin{proposition}
\label{p6} Let $f_{1},$ $f_{2}$ be two nonconstant meromorphic functions.
Then the following statements hold:\newline
(i) If $0<\sigma _{(\alpha (\log ),\beta ,\gamma )}[f_{1}]<\sigma _{(\alpha
(\log ),\beta ,\gamma )}[f_{2}]<+\infty $ and $\tau _{(\alpha (\log ),\beta
,\gamma )}[f_{1}]<\tau _{(\alpha (\log ),\beta ,\gamma )}[f_{2}]$, then $%
\tau _{(\alpha (\log ),\beta ,\gamma )}[f_{1}\pm f_{2}]=\tau _{\alpha (\log
),\beta ,\gamma )}[f_{1}\cdot f_{2}]=\tau _{(\alpha (\log ),\beta ,\gamma
)}[f_{2}];$\newline
(ii) If $0<\sigma _{(\alpha (\log ),\beta ,\gamma )}[f_{1}]=\sigma _{(\alpha
(\log ),\beta ,\gamma )}[f_{2}]=\sigma _{(\alpha (\log ),\beta ,\gamma
)}[f_{1}\pm f_{2}]<+\infty $, then $\tau _{(\alpha (\log ),\beta ,\gamma
)}[f_{1}\pm f_{2}]\leq \max \{\tau _{(\alpha (\log ),\beta ,\gamma
)}[f_{1}], $ $\tau _{(\alpha (\log ),\beta ,\gamma )}[f_{2}]\}$. Moreover if 
$\tau _{(\alpha (\log ),\beta ,\gamma )}[f_{1}]\neq \tau _{(\alpha (\log
),\beta ,\gamma )}[f_{2}]$, then $\tau _{(\alpha (\log ),\beta ,\gamma
)}[f_{1}\pm f_{2}]=\max \{\tau _{(\alpha (\log ),\beta ,\gamma )}[f_{1}],$ $%
\tau _{(\alpha (\log ),\beta ,\gamma )}[f_{2}]\};$\newline
(iii) If $0<\sigma _{(\alpha (\log ),\beta ,\gamma )}[f_{1}]=\sigma
_{(\alpha (\log ),\beta ,\gamma )}[f_{2}]=\sigma _{(\alpha (\log ),\beta
,\gamma )}[f_{1}\cdot f_{2}]<+\infty $, then $\tau _{(\alpha (\log ),\beta
,\gamma )}[f_{1}\cdot f_{2}]\leq \max \{\tau _{(\alpha (\log ),\beta ,\gamma
)}[f_{1}],$ $\tau _{(\alpha (\log ),\beta ,\gamma )}[f_{2}]\}$. Moreover if $%
\tau _{(\alpha (\log ),\beta ,\gamma )}[f_{1}]\neq \tau _{(\alpha (\log
),\beta ,\gamma )}[f_{2}]$, then $\tau _{(\alpha (\log ),\beta ,\gamma
)}[f_{1}\cdot f_{2}]=\max \{\tau _{(\alpha (\log ),\beta ,\gamma )}[f_{1}],$ 
$\tau _{(\alpha (\log ),\beta ,\gamma )}[f_{2}]\}$.
\end{proposition}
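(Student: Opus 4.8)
The plan is to transcribe, almost verbatim, the three arguments in the proof of Proposition \ref{p5}, performing the systematic substitutions: Definition \ref{d2} is replaced by Definition \ref{d3}, Proposition \ref{p2} by Proposition \ref{p3}, and the reciprocal and scalar-multiple parts of Proposition \ref{p4} by their $(\alpha(\log),\beta,\gamma)$-analogues. Throughout the argument I abbreviate $\sigma_{(\alpha(\log),\beta,\gamma)}[\cdot]$ and $\tau_{(\alpha(\log),\beta,\gamma)}[\cdot]$ to $\sigma[\cdot]$ and $\tau[\cdot]$, and write $\sigma_{i}=\sigma[f_{i}]$, $\tau_{i}=\tau[f_{i}]$. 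The only structural change is that every Nevanlinna-characteristic estimate acquires one extra exponential: Definition \ref{d3} gives, for each $\varepsilon>0$ and all large $r$, the bound $T(r,f_{i})\le\exp^{[2]}\{\alpha^{-1}(\log((\tau_{i}+\varepsilon)(\exp(\beta(\log\gamma(r))))^{\sigma_{i}}))\}$ --- that is, the very expression that sat inside $\exp\{\alpha^{-1}(\cdot)\}$ in the proof of Proposition \ref{p5} now sits inside $\exp^{[2]}\{\alpha^{-1}(\cdot)\}$ --- together with the matching lower bound along a suitable sequence $r_{n}\to+\infty$, with $\tau_{i}-\varepsilon$ in place of $\tau_{i}+\varepsilon$.

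For part (i), I would first invoke Proposition \ref{p3}(iii)--(iv), which gives $\sigma[f_{1}\pm f_{2}]=\sigma[f_{1}\cdot f_{2}]=\sigma_{2}$. For the lower bound I would evaluate $T(r,f_{1}\pm f_{2})\ge T(r,f_{2})-T(r,f_{1})-\log 2$ along the sequence $r_{n}$ that realizes $\tau_{2}$; since $\sigma_{1}<\sigma_{2}$, the subtracted term $T(r_{n},f_{1})$ is of strictly smaller order and hence doubly-exponentially negligible compared with the lower bound for $T(r_{n},f_{2})$, so $T(r_{n},f_{1}\pm f_{2})\ge\exp^{[2]}\{\alpha^{-1}(\log((\tau_{2}-2\varepsilon)(\exp(\beta(\log\gamma(r_{n}))))^{\sigma_{2}}))\}$ for all large $n$, and letting $\varepsilon\to0$ gives $\tau[f_{1}\pm f_{2}]\ge\tau_{2}$. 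For the reverse inequality I would use the identity $(f_{1}\pm f_{2})-f_{1}=\pm f_{2}$, the invariance $\tau[\pm f_{2}]=\tau_{2}$ (Proposition \ref{p4}), and $\sigma[-f_{1}]=\sigma_{1}<\sigma_{2}=\sigma[f_{1}\pm f_{2}]$: applying the lower-bound step just obtained to the pair $-f_{1}$, $f_{1}\pm f_{2}$ gives $\tau_{2}=\tau[(f_{1}\pm f_{2})-f_{1}]\ge\tau[f_{1}\pm f_{2}]$, and equality follows. The multiplicative statement is handled in exactly the same way, with $T(r,f_{1}\cdot f_{2})\ge T(r,f_{2})-T(r,f_{1})+O(1)$ in the lower bound and, for the reverse, the identity $f_{2}=(f_{1}\cdot f_{2})\cdot\frac{1}{f_{1}}$ together with $\sigma[\frac{1}{f_{1}}]=\sigma_{1}<\sigma_{2}$ (Proposition \ref{p4}(ii)), applying the lower-bound step to the pair $\frac{1}{f_{1}}$, $f_{1}\cdot f_{2}$.

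For parts (ii) and (iii), writing $\sigma$ for the common order, I would insert the upper bounds for $T(r,f_{1})$ and $T(r,f_{2})$ into $T(r,f_{1}\pm f_{2})\le T(r,f_{1})+T(r,f_{2})+\log 2$ (respectively $T(r,f_{1}\cdot f_{2})\le T(r,f_{1})+T(r,f_{2})$) and absorb the sum of the two $\exp^{[2]}\{\cdot\}$ terms together with the bounded additive term into $\exp^{[2]}\{\alpha^{-1}(\log((\max\{\tau_{1},\tau_{2}\}+3\varepsilon)(\exp(\beta(\log\gamma(r))))^{\sigma}))\}$, valid for all large $r$; then, after taking $\limsup$ and letting $\varepsilon\to0$, one gets $\tau[f_{1}\pm f_{2}]\le\max\{\tau_{1},\tau_{2}\}$ and likewise $\tau[f_{1}\cdot f_{2}]\le\max\{\tau_{1},\tau_{2}\}$. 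For the ``moreover'' clauses I would assume without loss of generality that $\tau_{1}<\tau_{2}$ and, since all three orders coincide, apply the inequality just obtained to the pair $-f_{1}$, $f_{1}\pm f_{2}$ in the additive case and to the pair $\frac{1}{f_{1}}$, $f_{1}\cdot f_{2}$ in the multiplicative case, using $\tau[\pm f_{2}]=\tau_{2}$, $\tau[-f_{1}]=\tau_{1}$ (Proposition \ref{p4}) and $\tau[\frac{1}{f_{1}}]=\tau_{1}$ (which follows from $T(r,\frac{1}{f_{1}})=T(r,f_{1})+O(1)$ exactly as in Proposition \ref{p4}); this produces $\tau_{2}\le\max\{\tau_{1},\tau[f_{1}\pm f_{2}]\}$ (respectively with $f_{1}\cdot f_{2}$), which together with $\tau_{1}<\tau_{2}$ forces $\tau[f_{1}\pm f_{2}]\ge\tau_{2}$, and hence equality.

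The step I expect to be the main obstacle is the family of ``absorption'' estimates on which everything rests: that $\exp^{[2]}\{A_{1}\}+\exp^{[2]}\{A_{2}\}+O(1)\le\exp^{[2]}\{\max\{A_{1},A_{2}\}+o(1)\}$ (needed for all the upper bounds), and that a difference $\exp^{[2]}\{A_{1}\}-\exp^{[2]}\{A_{2}\}-O(1)$ in which $A_{2}$ stems from a term of strictly smaller order is at least $\exp^{[2]}\{A_{1}-o(1)\}$ (needed for all the lower bounds). These are precisely the inequalities performed in the proof of Proposition \ref{p5}, where they rely on the monotonicity of $\alpha^{-1}$ and on the hypothesis $\alpha^{-1}(kx)=o(\alpha^{-1}(x))$, $0<k<1$, which prevents $\alpha^{-1}$ from growing too slowly (for instance only logarithmically); here the additional outer exponential only makes them more comfortable, since it supplies a further large multiplicative factor. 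Everything else --- the decompositions $(f_{1}\pm f_{2})-f_{1}=\pm f_{2}$ and $(f_{1}f_{2})/f_{1}=f_{2}$, the use of $T(r,1/f)=T(r,f)+O(1)$, and the bookkeeping of the occurrences of $\varepsilon$ --- is routine.
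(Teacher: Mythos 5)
Your proposal is correct and coincides with the paper's intended argument: the paper itself omits the proof of Proposition \ref{p6}, stating only that it ``would run parallel to that of Proposition \ref{p5},'' and your transcription (replacing Definition \ref{d2} by Definition \ref{d3}, Proposition \ref{p2} by Proposition \ref{p3}, and inserting the extra exponential in every characteristic estimate) is exactly that parallel proof. Your identification of the absorption inequalities as the only point needing care, and the observation that the additional outer exponential only makes them easier, is accurate.
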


\begin{corollary}
\label{c2}Let $f_{1},$ $f_{2}$ be two nonconstant meromorphic functions.
Then the following statements hold:\newline
(i) If $0<\sigma _{(\alpha (\log ),\beta ,\gamma )}[f_{1}]=\sigma _{(\alpha
(\log ),\beta ,\gamma )}[f_{2}]=\sigma _{(\alpha (\log ),\beta ,\gamma
)}[f_{1}\pm f_{2}]<+\infty $, then $\tau _{(\alpha (\log ),\beta ,\gamma
)}[f_{1}]\leq \max \{\tau _{(\alpha (\log ),\beta ,\gamma )}[f_{1}\pm
f_{2}], $ $\tau _{(\alpha (\log ),\beta ,\gamma )}[f_{2}]\};$\newline
(ii) If $0<\sigma _{(\alpha (\log ),\beta ,\gamma )}[f_{1}]=\sigma _{(\alpha
(\log ),\beta ,\gamma )}[f_{2}]=\sigma _{(\alpha (\log ),\beta ,\gamma
)}[f_{1}\cdot f_{2}]<+\infty $, then $\tau _{(\alpha (\log ),\beta ,\gamma
)}[f_{1}]\leq \max \{\tau _{(\alpha (\log ),\beta ,\gamma )}[f_{1}\cdot
f_{2}],$ $\tau _{(\alpha (\log ),\beta ,\gamma )}[f_{2}]\}$.
\end{corollary}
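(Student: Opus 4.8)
The plan is to obtain Corollary \ref{c2} directly from Proposition \ref{p6}, in exactly the same way Corollary \ref{c1} was deduced from Proposition \ref{p5}. The whole point is that $f_{1}$ can be rewritten as an additive combination of $f_{1}\pm f_{2}$ and $\mp f_{2}$ (respectively a multiplicative combination of $f_{1}\cdot f_{2}$ and $1/f_{2}$), so that the "$\max$" estimates already established in Proposition \ref{p6}(ii)--(iii) apply once one checks the order hypotheses are met.

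For part (i): write $f_{1}=(f_{1}+f_{2})+(-f_{2})$ and $f_{1}=(f_{1}-f_{2})+f_{2}$. Since $-f_{2}=(-1)\cdot f_{2}$ with $-1\in \mathbb{C}\setminus \{0\}$, Proposition \ref{p4} gives $\sigma _{(\alpha (\log ),\beta ,\gamma )}[-f_{2}]=\sigma _{(\alpha (\log ),\beta ,\gamma )}[f_{2}]$ and $\tau _{(\alpha (\log ),\beta ,\gamma )}[-f_{2}]=\tau _{(\alpha (\log ),\beta ,\gamma )}[f_{2}]$. Hence under the hypothesis $0<\sigma _{(\alpha (\log ),\beta ,\gamma )}[f_{1}]=\sigma _{(\alpha (\log ),\beta ,\gamma )}[f_{2}]=\sigma _{(\alpha (\log ),\beta ,\gamma )}[f_{1}\pm f_{2}]<+\infty$ the two functions $f_{1}\pm f_{2}$ and $\mp f_{2}$ together with their sum $f_{1}$ all have one and the same positive finite $(\alpha (\log ),\beta ,\gamma )$-order. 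Applying Proposition \ref{p6}(ii) with $f_{1}\pm f_{2}$ and $\mp f_{2}$ in the roles of $f_{1}$ and $f_{2}$ yields
\[
\tau _{(\alpha (\log ),\beta ,\gamma )}[f_{1}]\leq \max \{\tau _{(\alpha (\log ),\beta ,\gamma )}[f_{1}\pm f_{2}],\ \tau _{(\alpha (\log ),\beta ,\gamma )}[\mp f_{2}]\}=\max \{\tau _{(\alpha (\log ),\beta ,\gamma )}[f_{1}\pm f_{2}],\ \tau _{(\alpha (\log ),\beta ,\gamma )}[f_{2}]\},
\]
which is the assertion.

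For part (ii): write $f_{1}=(f_{1}\cdot f_{2})\cdot \frac{1}{f_{2}}$. By Proposition \ref{p4}(ii), $\sigma _{(\alpha (\log ),\beta ,\gamma )}[\frac{1}{f_{2}}]=\sigma _{(\alpha (\log ),\beta ,\gamma )}[f_{2}]$, so the hypothesis $0<\sigma _{(\alpha (\log ),\beta ,\gamma )}[f_{1}]=\sigma _{(\alpha (\log ),\beta ,\gamma )}[f_{2}]=\sigma _{(\alpha (\log ),\beta ,\gamma )}[f_{1}\cdot f_{2}]<+\infty$ again forces $f_{1}\cdot f_{2}$, $\frac{1}{f_{2}}$ and their product $f_{1}$ to share the same positive finite $(\alpha (\log ),\beta ,\gamma )$-order. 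Proposition \ref{p6}(iii) applied to $f_{1}\cdot f_{2}$ and $\frac{1}{f_{2}}$ then gives $\tau _{(\alpha (\log ),\beta ,\gamma )}[f_{1}]\leq \max \{\tau _{(\alpha (\log ),\beta ,\gamma )}[f_{1}\cdot f_{2}],\ \tau _{(\alpha (\log ),\beta ,\gamma )}[\frac{1}{f_{2}}]\}$, and since $T(r,\frac{1}{f_{2}})=T(r,f_{2})+O(1)$ forces $\tau _{(\alpha (\log ),\beta ,\gamma )}[\frac{1}{f_{2}}]=\tau _{(\alpha (\log ),\beta ,\gamma )}[f_{2}]$, the claim follows.

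I do not anticipate any real obstacle here: the statement is a formal corollary of Proposition \ref{p6} together with the elementary invariance of $\sigma _{(\alpha (\log ),\beta ,\gamma )}$ and $\tau _{(\alpha (\log ),\beta ,\gamma )}$ under $f\mapsto af$ and $f\mapsto 1/f$. The only point that needs a moment's care is verifying, in each application of Proposition \ref{p6}, that the three functions involved genuinely have equal, positive and finite order --- which is precisely what the stated hypotheses furnish once combined with Proposition \ref{p4}. This is the verbatim analogue of the proof of Corollary \ref{c1}, with $(\alpha ,\beta ,\gamma )$ replaced by $(\alpha (\log ),\beta ,\gamma )$ throughout.
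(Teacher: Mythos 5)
Your proposal is correct and is exactly the argument the paper intends: the paper itself only says that the proof of Corollary \ref{c2} ``runs parallel'' to that of Corollary \ref{c1}, which is precisely the decomposition $f_{1}=(f_{1}\pm f_{2})\mp f_{2}$ (resp. $f_{1}=(f_{1}\cdot f_{2})\cdot \frac{1}{f_{2}}$) combined with Proposition \ref{p6} and the invariance of order and type under $f\mapsto af$ and $f\mapsto 1/f$. Nothing further is needed.
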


\qquad The proofs of Proposition \ref{p6} and Corollary \ref{c2} would run
parallel to that of Proposition \ref{p5} and Corollary \ref{c1}
respectively. We omit the details.

\section{\textbf{Main Results}}

\qquad In this section we present out main results which considerably extend
the results of Kinnunen \cite{13}, Long et al. \cite{L} as well as Bela\"{\i}%
di \cite{b3}, \cite{b5}. Moreover, the results obtained give an answer to
the problem of Chyzhykov and Semochko (\cite{CS}, Remark 1.11).

\begin{theorem}
\label{t1}Let $A_{0}(z),$ $A_{1}(z),...,A_{k-1}(z)$ be entire functions.
Then all nontrivial solutions $f$ of $\left( \ref{1}\right) $ satisfy%
\begin{equation*}
\sup \{\sigma _{(\alpha (\log ),\beta ,\gamma )}[f]\shortmid L(f)=0\}=\sup
\{\sigma _{(\alpha ,\beta ,\gamma )}[A_{j}]\shortmid j=0,...,k-1\}\text{.}
\end{equation*}
\end{theorem}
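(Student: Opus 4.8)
The plan is to prove the two inequalities $\leq$ and $\geq$ between the two suprema. Write $\sigma_L := \sup\{\sigma_{(\alpha(\log),\beta,\gamma)}[f]\mid L(f)=0\}$ and $\sigma_A := \sup\{\sigma_{(\alpha,\beta,\gamma)}[A_j]\mid j=0,\dots,k-1\}$.

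First I would establish $\sigma_L \leq \sigma_A$. Let $f$ be any nontrivial solution of $(\ref{1})$. From the equation one has, away from zeros of $f$,
\begin{equation*}
|A_0(z)| \leq \frac{|f^{(k)}(z)|}{|f(z)|} + |A_{k-1}(z)|\frac{|f^{(k-1)}(z)|}{|f(z)|} + \cdots + |A_1(z)|\frac{|f'(z)|}{|f(z)|}\text{,}
\end{equation*}
and more generally the standard trick: rewrite $(\ref{1})$ so that each $A_j$ is bounded by a sum of the others times logarithmic derivatives of $f$. I would use the logarithmic derivative estimate $|f^{(j)}(z)/f(z)| \leq C\,[T(2r,f)]^{k+1}$ for $|z|=r$ outside an exceptional set $F$ of finite linear measure (the standard Wiman--Valiron / Gundersen-type bound valid for entire $f$ — here one can also take $z$ on the circle where $|f(z)|=M(r,f)$). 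Combining, on a suitable circle $|z|=r\notin F$,
\begin{equation*}
M(r,A_0) \leq k\,\big(\max_j M(r,A_j)\big)\,C\,[T(2r,f)]^{k+1}\text{,}
\end{equation*}
hence $\log M(r,A_j)\lesssim \log^{+} T(2r,f)$ up to lower order terms, and therefore $\log^{[2]}M(r,A_j) \leq \log^{[3]}M(2r,f) + O(1)$. Applying $\alpha$, dividing by $\beta(\log\gamma(r))$, using that $\beta\in L_2$ gives $\beta(\log\gamma(2r))\sim\beta(\log\gamma(r))$ (here one needs $\gamma\in L_3$ so $\log\gamma(2r)=\log\gamma(r)+O(1)$), together with Proposition \ref{p1}'s formula $\sigma_{(\alpha(\log),\beta,\gamma)}[f]=\limsup \alpha(\log^{[3]}M(r,f))/\beta(\log\gamma(r))$, yields $\sigma_{(\alpha,\beta,\gamma)}[A_j]\leq \sigma_{(\alpha(\log),\beta,\gamma)}[f]$ for each $j$, so $\sigma_A\leq\sigma_L$. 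Wait — this is the direction $\sigma_A \leq \sigma_L$; so in fact this step proves one of the two inequalities, namely that the coefficient side is dominated by the solution side.

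Next I would prove $\sigma_L \leq \sigma_A$, i.e. that every solution grows no faster than the coefficients. This is the step I expect to be the main obstacle. Set $\sigma := \sigma_A$; if $\sigma=+\infty$ there is nothing to prove, so assume $\sigma<\infty$. Fix a solution $f$ of $(\ref{1})$. Starting from
\begin{equation*}
|f^{(k)}(z)| \leq \big(|A_{k-1}(z)|+\cdots+|A_0(z)|\big)\max_{0\leq j\leq k-1}|f^{(j)}(z)|\text{,}
\end{equation*}
I would invoke the Wiman--Valiron theory: for $|z|=r$ with $|f(z)|=M(r,f)$ and $r\notin E$ (a set of finite logarithmic measure), $f^{(j)}(z)/f(z)\sim (\nu(r,f)/z)^j$, so dividing by $f(z)$ gives $(\nu(r,f)/r)^k \leq (1+o(1))\big(\sum_j M(r,A_j)\big)(\nu(r,f)/r)^{k-1}$, whence $\nu(r,f)\leq r(1+o(1))\sum_j M(r,A_j)$ for $r\notin E$. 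Then $\log\nu(r,f)\leq \log M(r, A) + O(\log r)$ where $M(r,A):=\max_j M(r,A_j)$, and using the bound $M(r_j)\le \exp(\exp(\,\cdot\,))$ type estimate from the definition of $\sigma_A$ I get $\log^{[2]}\nu(r,f)\leq \log^{[2]}M(r,A)+o(1)$. Finally I would pass from $\nu(r,f)$ back to $M(r,f)$ via $\log M(r,f)\leq \log\mu(r,f)+\log\nu(r,f)+O(1)$ and $\log\mu(r,f)=\int_{r_0}^{r}\frac{\nu(t,f)}{t}\,dt+O(1)$, so $\log M(r,f)\le \log\nu(2r,f)+O(\log r)$ up to the usual Borel-type lemma handling the exceptional set $E$ (increasing $r$ to $2r$ removes the finite logarithmic measure set). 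Thus $\log^{[3]}M(r,f)\leq \log^{[2]}M(2r,A)+O(1)$; applying $\alpha$, dividing by $\beta(\log\gamma(r))$, and using $\beta\in L_2$, $\gamma\in L_3$ as before, together with the condition $\alpha(\log x)=o(\alpha(x))$ to absorb the $O(\log r)$ terms, gives $\sigma_{(\alpha(\log),\beta,\gamma)}[f]\leq\sigma_A$.

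Combining the two inequalities gives equality of the two suprema. The delicate points, and where I would be most careful, are: (a) correctly invoking the logarithmic-derivative / Wiman--Valiron estimates with their exceptional sets and converting "finite logarithmic measure" exceptional sets into a harmless $r\to 2r$ via the standard Borel lemma (\cite{1}); (b) checking that all the auxiliary growth functions behave well under the substitutions $r\mapsto 2r$ and $M(r,f)\mapsto M(2r,f)$ — this is exactly where the hypotheses $\gamma\in L_3$ (so $\gamma(2r)\le 2\gamma(r)$, $\log\gamma(2r)=\log\gamma(r)+O(1)$), $\beta\in L_2$ (so $\beta(\log\gamma(r)+O(1))=(1+o(1))\beta(\log\gamma(r))$), and $\alpha\in L_1$ together with $\alpha(\log x)=o(\alpha(x))$ are used; and (c) keeping straight the bookkeeping of iterated logarithms, noting that the solution side uses $\sigma_{(\alpha(\log),\beta,\gamma)}$ — one extra logarithm — precisely because the Wiman--Valiron comparison costs one exponential level when passing between coefficients and solutions. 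The extra $\alpha(\log)$ on the solution side is what makes the clean equality possible and is the conceptual novelty over the classical $\sigma[f]\le\max\sigma[A_j]+1$ type statements.
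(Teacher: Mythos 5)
Your second inequality, $\sigma_L\le\sigma_A$, is handled essentially as in the paper: substitute the Wiman--Valiron asymptotics $f^{(j)}(z)=(\nu(r,f)/z)^j(1+o(1))f(z)$ into the equation, bound the central index by treating the result as a polynomial in $\nu(r,f)$, and pass back from $\nu$ to $M(r,f)$ and $T(r,f)$ with a Borel-type lemma absorbing the exceptional set; your bookkeeping of the extra logarithm and of the $r\mapsto 2r$ substitution is the right one. That half of the proposal is sound.

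The other direction, $\sigma_A\le\sigma_L$, has a genuine gap. You try to control each coefficient from a \emph{single} solution via the triangle inequality applied to $(\ref{1})$, arriving at
\begin{equation*}
M(r,A_0)\le k\,\bigl(\max_{j}M(r,A_j)\bigr)\,C\,[T(2r,f)]^{k+1}\text{,}
\end{equation*}
and then conclude $\log M(r,A_j)\lesssim\log^{+}T(2r,f)$. This is circular: the factor $\max_j M(r,A_j)$ on the right-hand side is precisely the quantity you are trying to bound (it may be attained at $A_0$ itself, in which case the inequality is vacuous, or at another $A_{j_0}$ about which you know nothing). A single equation cannot isolate $k$ unknown coefficients; large coefficients can conspire to cancel against one another along one solution. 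The triangle-inequality estimate on $|A_0|$ is only useful under a dominance hypothesis such as that of Theorem \ref{t3}, which is absent here. The missing idea is to take a full solution base $f_1,\dots,f_k$ (each of finite $(\alpha(\log),\beta,\gamma)$-order, the case $\Upsilon=+\infty$ being trivial) and use the Wronskian representation $A_{k-s}=-W_{k-s}(f_1,\dots,f_k)\cdot W^{-1}$ — equivalently, the order-reduction procedure the paper carries out — so that each $m(r,A_j)$ is bounded by finitely many proximity functions $m\bigl(r,f_i^{(l)}/f_i\bigr)$, each of which is $O\bigl(\exp\{\alpha^{-1}((\Upsilon+\varepsilon)\beta(\log\gamma(r)))\}\bigr)$ outside a set of finite linear measure by the logarithmic derivative lemma (Lemma \ref{l1}). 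Without introducing a fundamental system, this inequality cannot be obtained.
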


\begin{theorem}
\label{t2}Let $A_{0}(z),$ $A_{1}(z),...,A_{k-1}(z)$ be entire functions, $%
m=\max \{j\shortmid \sigma _{(\alpha ,\beta ,\gamma )}[A_{j}]\geq \lambda ,$ 
$j=0,...,k-1\}$. Then $\left( \ref{1}\right) $ processes at most $m$ entire
linearly independent solutions $f$ with $\sigma _{(\alpha (\log ),\beta
,\gamma )}[f]<\lambda $.
\end{theorem}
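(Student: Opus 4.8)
The argument is by contradiction. Suppose $(\ref{1})$ admits $m+1$ linearly independent solutions $f_{1},\dots ,f_{m+1}$ (automatically entire) with $\sigma _{(\alpha (\log ),\beta ,\gamma )}[f_{i}]<\lambda $. By the choice of $m$ we have $\sigma _{(\alpha ,\beta ,\gamma )}[A_{j}]<\lambda $ for every $j$ with $m<j\leq k-1$ (and the statement is vacuous if the set defining $m$ is empty, with the usual convention). Throughout I would use freely that differentiation increases neither the $(\alpha ,\beta ,\gamma )$- nor the $(\alpha (\log ),\beta ,\gamma )$-order (a consequence of the logarithmic derivative lemma together with $\alpha \in L_{1}$, $\beta \in L_{2}$, $\gamma \in L_{3}$), that finite sums, products and reciprocals behave as in Propositions \ref{p2}--\ref{p4}, and one more ingredient: a logarithmic-derivative estimate in this scale ensuring that any meromorphic function $B$ built from $f_{1},\dots ,f_{m+1}$ and their derivatives by the Wronskian-ratio operations below has $\sigma _{(\alpha ,\beta ,\gamma )}[B]<\lambda $, i.e. one iterated logarithm \emph{less} than the solutions. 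Securing this ``drop of one logarithm'' is where the weight of the proof lies and is the step I expect to be the main obstacle, since a crude Nevanlinna bound on such a ratio only controls one iterated logarithm too few; the mechanism is that after dividing out the $f_{i}$ (see the next paragraph) everything is expressed through the logarithmic derivatives $f_{i}^{(l)}/f_{i}$, which grow much more slowly than the $f_{i}$ themselves.

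Granting this, I would form the unique monic linear differential operator $L_{1}$ of order $m+1$ annihilating $f_{1},\dots ,f_{m+1}$, namely $L_{1}(g)=g^{(m+1)}+B_{m}g^{(m)}+\cdots +B_{0}g=W(f_{1},\dots ,f_{m+1},g)/W(f_{1},\dots ,f_{m+1})$, whose coefficients $B_{j}$ are, up to sign, ratios $W_{j}/W$ of Wronskian-type determinants of the $f_{i}$. Dividing the $i$-th column of each of $W_{j}$ and $W$ by $f_{i}$ exhibits $B_{j}$ as a ratio of two determinants whose entries are the logarithmic derivatives $f_{i}^{(l)}/f_{i}$; combining the estimate just mentioned with Propositions \ref{p2}--\ref{p4} then gives $\sigma _{(\alpha ,\beta ,\gamma )}[B_{j}]<\lambda $ for every $j$. (As a cross-check and an alternative route for the top coefficient, Abel's identity $W(f_{1},\dots ,f_{m+1})=c\exp (-\int B_{m})$ together with the scale relation $\sigma _{(\alpha (\log ),\beta ,\gamma )}[\exp h]=\sigma _{(\alpha ,\beta ,\gamma )}[h]$ yields $\sigma _{(\alpha ,\beta ,\gamma )}[B_{m}]=\sigma _{(\alpha (\log ),\beta ,\gamma )}[W(f_{1},\dots ,f_{m+1})]<\lambda $, the last inequality holding because $W(f_{1},\dots ,f_{m+1})$ is a finite sum of products of the $f_{i}^{(l)}$.)

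Next I would perform the Euclidean division of the operator $L$ in $(\ref{1})$ by $L_{1}$: write $L=Q\circ L_{1}+L_{2}$ with $Q$ monic of order $k-m-1$ and $\operatorname{ord}L_{2}\leq m$. Applying both sides to each $f_{i}$ gives $0=L(f_{i})=Q(L_{1}(f_{i}))+L_{2}(f_{i})=L_{2}(f_{i})$, so the $m+1$ linearly independent functions $f_{1},\dots ,f_{m+1}$ all solve $L_{2}(g)=0$, an equation of order at most $m$; this forces $L_{2}\equiv 0$, i.e. $L=Q\circ L_{1}$. Matching coefficients in $L=Q\circ L_{1}$ from the highest index downward, every coefficient of $Q$ is a differential polynomial in the $B_{j}$ and the $A_{i}$ with $i>m$, and hence so is $A_{m}$, the coefficient of $D^{m}$ in $Q\circ L_{1}$. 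Since all the $B_{j}$ have $(\alpha ,\beta ,\gamma )$-order $<\lambda $ by the previous paragraph and all the $A_{i}$ with $i>m$ have $(\alpha ,\beta ,\gamma )$-order $<\lambda $ by the choice of $m$, Propositions \ref{p2}--\ref{p4} and stability under differentiation give $\sigma _{(\alpha ,\beta ,\gamma )}[A_{m}]<\lambda $, contradicting $\sigma _{(\alpha ,\beta ,\gamma )}[A_{m}]\geq \lambda $.

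Therefore no such family $f_{1},\dots ,f_{m+1}$ exists, which is the assertion. (In the degenerate case $m=k-1$, $Q$ is the identity and $L=L_{1}$, so the contradiction is immediate once $\sigma _{(\alpha ,\beta ,\gamma )}[B_{k-1}]<\lambda $ is known; equivalently, $f_{1},\dots ,f_{k}$ would then be a fundamental system of $(\ref{1})$, whence Abel's identity $W(f_{1},\dots ,f_{k})=c\exp (-\int A_{k-1})$ and the scale relation give $\sigma _{(\alpha ,\beta ,\gamma )}[A_{k-1}]=\sigma _{(\alpha (\log ),\beta ,\gamma )}[W(f_{1},\dots ,f_{k})]<\lambda $, again absurd.)
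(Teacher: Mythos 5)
Your overall architecture (annihilating operator $L_{1}$ for $f_{1},\dots ,f_{m+1}$, Euclidean division $L=Q\circ L_{1}+L_{2}$, $L_{2}\equiv 0$ by counting independent solutions, coefficient matching to contradict $\sigma _{(\alpha ,\beta ,\gamma )}[A_{m}]\geq \lambda $) is a legitimate algebraic repackaging of the paper's iterated order-reduction, and the coefficient-matching algebra is correct. But the step you yourself flag as load-bearing --- that $\sigma _{(\alpha ,\beta ,\gamma )}[B_{j}]<\lambda $ for $B_{j}=\pm W_{j}/W$ --- is a genuine gap, and the mechanism you offer does not close it. The point is that $B_{j}$ is meromorphic with poles at the zeros of the Wronskian $W(f_{1},\dots ,f_{m+1})$, and the only available bound on the number of those zeros is $N(r,1/W)\leq T(r,W)+O(1)$, which lives at the $\exp ^{[2]}\{\alpha ^{-1}(\cdot )\}$ scale of the solutions, not at the $\exp \{\alpha ^{-1}(\cdot )\}$ scale you need. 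So $T(r,B_{j})=m(r,B_{j})+N(r,B_{j})$ can be of the larger scale and $\sigma _{(\alpha ,\beta ,\gamma )}[B_{j}]$ can be huge or infinite; the ``drop of one logarithm'' simply does not hold for the full Nevanlinna characteristic. Worse, your column-division trick does not even control $m(r,B_{j})$: writing $B_{j}=P_{j}/P$ with $P_{j},P$ determinants in the logarithmic derivatives $f_{i}^{(l)}/f_{i}$, you get $m(r,P_{j})$ small, but $m(r,1/P)\leq T(r,P)+O(1)=m(r,P)+N(r,P)+O(1)$, and $N(r,P)$ counts poles of $P$ sitting at the zeros of the $f_{i}$ --- again uncontrolled at the required scale. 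The logarithmic derivative lemma is an estimate on proximity functions only; any step that inverts a function built from the $f_{i}$ reintroduces counting functions at the wrong scale.

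This is precisely why the paper (following Laine and Kinnunen) does not use the Wronskian representation of the reduced coefficients for the fine estimate. The order-reduction recursion expresses each new coefficient \emph{additively} as an old coefficient plus terms $A_{\cdot }\,\nu ^{(n)}/\nu $, so that only proximity functions are ever added and the contradiction is obtained between $m(r,A_{m,0})$ being simultaneously small and not small, with the entirety of $A_{m}$ (hence $T(r,A_{m})=m(r,A_{m})$) closing the loop. Your argument can be repaired along the same lines: replace the order bound on $B_{j}$ by the statement $m(r,B_{j})=O(\exp \{\alpha ^{-1}((\lambda _{1}+\varepsilon )\beta (\log \gamma (r)))\})$ off a set of finite measure, proved not via $W_{j}/W$ but via the P\'{o}lya--Frobenius factorization $L_{1}=(D-g_{m+1})\cdots (D-g_{1})$ in which each $g_{i}$ is a logarithmic derivative of a ratio of Wronskians (so Lemma \ref{l1} applies to it directly), and then carry proximity functions through the identity $L=Q\circ L_{1}$ to bound $m(r,A_{m})$. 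As written, though, the claim $\sigma _{(\alpha ,\beta ,\gamma )}[B_{j}]<\lambda $ is unproved and in general false, so the proof is incomplete at exactly the point where all the analytic content resides. (Your degenerate case $m=k-1$ via Abel's identity is fine, since there $B_{k-1}=-W'/W$ is a single logarithmic derivative and $A_{k-1}$ is entire.)
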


\begin{theorem}
\label{t3}Let $A_{0}(z),$ $A_{1}(z),...,A_{k-1}(z)$ be entire functions such
that $\sigma _{(\alpha ,\beta ,\gamma )}[A_{0}]>\max \{\sigma _{(\alpha
,\beta ,\gamma )}[A_{j}],$ $j=1,...,k-1\}$. Then every solution $%
f(z)\not\equiv 0$ of $\left( \ref{1}\right) $ satisfies $\sigma _{(\alpha
(\log ),\beta ,\gamma )}[f]=\sigma _{(\alpha ,\beta ,\gamma )}[A_{0}]$.
\end{theorem}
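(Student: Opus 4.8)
The plan is to prove the two inequalities $\sigma_{(\alpha(\log),\beta,\gamma)}[f]\le\sigma_{(\alpha,\beta,\gamma)}[A_0]$ and $\sigma_{(\alpha(\log),\beta,\gamma)}[f]\ge\sigma_{(\alpha,\beta,\gamma)}[A_0]$ and then combine them. The first is immediate from Theorem~\ref{t1}: the hypothesis $\sigma_{(\alpha,\beta,\gamma)}[A_0]>\sigma_{(\alpha,\beta,\gamma)}[A_j]$ for $1\le j\le k-1$ forces $\sup\{\sigma_{(\alpha,\beta,\gamma)}[A_j]:j=0,\dots,k-1\}=\sigma_{(\alpha,\beta,\gamma)}[A_0]$, so Theorem~\ref{t1} gives $\sigma_{(\alpha(\log),\beta,\gamma)}[f]\le\sigma_{(\alpha,\beta,\gamma)}[A_0]$ for every nontrivial solution $f$. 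Before turning to the lower bound I would record that $f$ must be transcendental: by the condition $\alpha(\log^{[2]}x)=o(\beta(\log\gamma(x)))$ together with $\alpha\in L_1$, every rational function has $(\alpha,\beta,\gamma)$-order $0$, so $\sigma_{(\alpha,\beta,\gamma)}[A_0]>0$ makes $A_0$ transcendental; were $f$ a polynomial, solving $(\ref{1})$ for $A_0$ and applying Proposition~\ref{p2} would give $\sigma_{(\alpha,\beta,\gamma)}[A_0]\le\max_{1\le j\le k-1}\sigma_{(\alpha,\beta,\gamma)}[A_j]$, a contradiction.

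For the reverse inequality, set $\sigma_0=\sigma_{(\alpha,\beta,\gamma)}[A_0]$, $\sigma_j=\sigma_{(\alpha,\beta,\gamma)}[A_j]$, and rewrite $(\ref{1})$ as $-A_0=f^{(k)}/f+\sum_{j=1}^{k-1}A_j\,f^{(j)}/f$. Fix $\varepsilon>0$ with $\sigma_j+\varepsilon<\sigma_0-\varepsilon$ for all $1\le j\le k-1$. From the $M(r,\cdot)$-form of the $(\alpha,\beta,\gamma)$-order recorded after Definition~\ref{d1}, there is a sequence $r_n\to+\infty$ along which
\begin{equation*}
\log M(r_n,A_0)\ge\exp\big(\alpha^{-1}((\sigma_0-\varepsilon)\beta(\log\gamma(r_n)))\big),
\end{equation*}
while $\log M(r,A_j)\le\exp(\alpha^{-1}((\sigma_j+\varepsilon)\beta(\log\gamma(r))))$ for all large $r$ and $1\le j\le k-1$. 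Next I would apply a standard pointwise estimate for logarithmic derivatives: there are a constant $C=C(k)>0$ and a set $E$ of finite logarithmic measure such that $|f^{(j)}(z)/f(z)|\le[T(2r,f)]^{C}$ for $|z|=r\notin E$, $r$ large and $1\le j\le k$. Since $E$ has finite logarithmic measure, $M(r,A_0)$ is nondecreasing, and $\beta\in L_2$, $\gamma\in L_3$ give $\beta(\log\gamma(2r))=(1+o(1))\beta(\log\gamma(r))$, we may replace $r_n$ by a point of $[r_n,2r_n]\setminus E$ and so assume $r_n\notin E$. Choosing on $|z|=r_n$ a point $z_n$ with $f(z_n)\ne0$ and $|A_0(z_n)|\ge\tfrac{1}{2}M(r_n,A_0)$, which is possible since the zeros of $f$ are isolated, and inserting the above bounds into the rewritten equation, we obtain
\begin{equation*}
\tfrac{1}{2}M(r_n,A_0)\le[T(2r_n,f)]^{C}\Big(1+(k-1)\max_{1\le j\le k-1}M(r_n,A_j)\Big).
\end{equation*}

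It remains to unwind this last inequality. Put $u_n=\alpha^{-1}((\sigma_0-\varepsilon)\beta(\log\gamma(r_n)))\to+\infty$, so $\log M(r_n,A_0)\ge e^{u_n}$; by the choice of $\varepsilon$ together with $\alpha^{-1}(tx)=o(\alpha^{-1}(x))$ for $0<t<1$ (used with $t=(\sigma_j+\varepsilon)/(\sigma_0-\varepsilon)$) we get $\log M(r_n,A_j)\le e^{o(u_n)}$, so the factor $1+(k-1)\max_{j}M(r_n,A_j)$ is negligible against $M(r_n,A_0)$. Taking logarithms twice then yields $\log^{[2]}T(2r_n,f)\ge u_n-O(1)$, and applying $\alpha$ and using $\alpha\in L_1$ to absorb the additive $O(1)$ gives
\begin{equation*}
\alpha\big(\log^{[2]}T(2r_n,f)\big)\ge\alpha(u_n)-O(1)=(\sigma_0-\varepsilon)\,\beta(\log\gamma(r_n))-O(1).
\end{equation*}
Dividing by $\beta(\log\gamma(2r_n))=(1+o(1))\beta(\log\gamma(r_n))$ and letting $n\to+\infty$ gives $\sigma_{(\alpha(\log),\beta,\gamma)}[f]\ge\sigma_0-\varepsilon$, and since $\varepsilon>0$ is arbitrary, $\sigma_{(\alpha(\log),\beta,\gamma)}[f]\ge\sigma_0$; with the upper bound this proves the theorem. (If $\sigma_0=+\infty$ the same argument works with $\sigma_0-\varepsilon$ replaced by an arbitrary constant $N>\max_{1\le j\le k-1}\sigma_j$.) The main obstacle is precisely this final bookkeeping: controlling how the constant $C$ from the logarithmic-derivative estimate and the extra logarithms propagate through $\alpha$, $\beta$ and $\gamma$ without degrading the constant $\sigma_0-\varepsilon$; it is exactly the structural hypotheses $\alpha\in L_1$, $\beta\in L_2$, $\gamma\in L_3$, $\alpha^{-1}(tx)=o(\alpha^{-1}(x))$ and $\alpha(\log^{[2]}x)=o(\beta(\log\gamma(x)))$ that are designed to legitimize each of these steps.
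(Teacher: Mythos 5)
Your proposal is correct, but it follows a genuinely different route from the paper's proof of Theorem \ref{t3}. For the lower bound $\sigma_{(\alpha(\log),\beta,\gamma)}[f]\ge\sigma_{(\alpha,\beta,\gamma)}[A_{0}]$, the paper simply invokes Theorem \ref{t2} with $m=0$ and $\lambda=\sigma_{0}$ (so that no nontrivial solution can have order below $\sigma_{0}$), thereby leaning on the order-reduction/Wronskian machinery already built for Theorems \ref{t1}--\ref{t2}; you instead give a direct, self-contained argument: isolate $A_{0}$ in (\ref{1}), compare $M(r,A_{0})$ against $M(r,A_{j})$ at a well-chosen point of the circle, and control the logarithmic derivatives via Gundersen's estimate (Lemma \ref{l9}). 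This is essentially the technique the paper reserves for Theorem \ref{t4} (where it is forced to argue with types), so your route has the virtue of making Theorem \ref{t3} independent of Theorem \ref{t2}, at the cost of redoing the exceptional-set and double-logarithm bookkeeping that Theorem \ref{t2} already encapsulates; your bookkeeping (the use of $\alpha^{-1}(tx)=o(\alpha^{-1}(x))$ to make the $A_{j}$, $j\ge 1$, negligible, the absorption of the additive $O(1)$ via $\alpha\in L_{1}$, and the passage from $2r_{n}$ to $r_{n}$ via $\beta\in L_{2}$, $\gamma\in L_{3}$) is sound. For the upper bound, the paper recomputes the estimate from Lemma \ref{l8} with $p=1$, whereas you simply quote Theorem \ref{t1} (equivalently Lemma \ref{l10}); both are legitimate since Theorem \ref{t1} does not depend on Theorem \ref{t3}. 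Your preliminary observation that $f$ must be transcendental (needed for Lemma \ref{l9}) is a detail the paper glosses over but which your argument genuinely requires.
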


\begin{theorem}
\label{t4}Let $A_{0}(z),$ $A_{1}(z),...,A_{k-1}(z)$ be entire functions.
Assume that%
\begin{equation*}
\max \{\sigma _{(\alpha ,\beta ,\gamma )}[A_{j}],j=1,...,k-1\}\leq \sigma
_{(\alpha ,\beta ,\gamma )}[A_{0}]=\sigma _{0}<+\infty
\end{equation*}%
and%
\begin{equation*}
\max \{\tau _{(\alpha ,\beta ,\gamma ),M}[A_{j}]:\sigma _{(\alpha ,\beta
,\gamma )}[A_{j}]=\sigma _{(\alpha ,\beta ,\gamma )}[A_{0}]>0\}<\tau
_{(\alpha ,\beta ,\gamma ),M}[A_{0}]=\tau _{M}\text{.}
\end{equation*}%
Then every solution $f(z)\not\equiv 0$ of $\left( \ref{1}\right) $ satisfies 
$\sigma _{(\alpha (\log ),\beta ,\gamma )}[f]=\sigma _{(\alpha ,\beta
,\gamma )}[A_{0}].$
\end{theorem}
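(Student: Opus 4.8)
The plan is to adapt the classical Wiman–Valiron / central-index argument used in the proof of Theorem \ref{t3} (where the coefficient $A_0$ dominates by order), now using \emph{type} as the decisive quantity when the orders are equal. First I would establish the easy half, namely $\sigma_{(\alpha(\log),\beta,\gamma)}[f]\le\sigma_0$ for every nontrivial solution $f$. This is immediate from Theorem \ref{t1}: the hypothesis $\max\{\sigma_{(\alpha,\beta,\gamma)}[A_j]\}=\sigma_0<+\infty$ together with that theorem gives $\sigma_{(\alpha(\log),\beta,\gamma)}[f]\le\sigma_0$ directly. So the substantive content is the reverse inequality $\sigma_{(\alpha(\log),\beta,\gamma)}[f]\ge\sigma_0$.

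For the lower bound I would argue by contradiction, assuming $\sigma_{(\alpha(\log),\beta,\gamma)}[f]=\rho<\sigma_0$. Rewrite $\left(\ref{1}\right)$ as
\begin{equation*}
-A_0(z)=\frac{f^{(k)}(z)}{f(z)}+A_{k-1}(z)\frac{f^{(k-1)}(z)}{f(z)}+\cdots+A_1(z)\frac{f'(z)}{f(z)}.
\end{equation*}
On the level of maximum modulus, I would use the Wiman–Valiron theorem: there is a set $E$ of finite logarithmic measure so that, for $|z|=r\notin E$ with $|f(z)|=M(r,f)$,
\begin{equation*}
\frac{f^{(j)}(z)}{f(z)}=\Big(\frac{\nu(r,f)}{z}\Big)^{j}(1+o(1)),\qquad j=1,\dots,k,
\end{equation*}
where $\nu(r,f)$ is the central index. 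Combining this with $|A_j(z)|\le M(r,A_j)$ and the estimate on $-A_0(z)$ from below along a suitable sequence, one obtains, for $r$ outside a set of finite logarithmic measure,
\begin{equation*}
M(r,A_0)\le k\,\nu(r,f)^{k}\,\max_{1\le j\le k-1}M(r,A_j)\,(1+o(1)).
\end{equation*}
Taking $\log$, then $\log^{[2]}$, applying $\alpha$, and dividing by $\beta(\log\gamma(r))$, the term $\alpha(\log^{[2]}\nu(r,f))$ is controlled by $\rho+o(1)$ (after passing from $M(r,f)$ to $\nu(r,f)$ via the standard inequality $\nu(r,f)\le\frac{\log M(2r,f)}{\log 2}$ and the condition $\alpha(\log x)=o(\alpha(x))$, plus $\beta(\log\gamma(2r))\sim\beta(\log\gamma(r))$ since $\gamma\in L_3$ and $\beta\in L_2$). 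The crucial point is the \emph{type} hypothesis: since $\tau_M:=\tau_{(\alpha,\beta,\gamma),M}[A_0]$ strictly exceeds every $\tau_{(\alpha,\beta,\gamma),M}[A_j]$ with $\sigma_{(\alpha,\beta,\gamma)}[A_j]=\sigma_0$, one can pick $\tau^*$ with that max $<\tau^*<\tau_M$; then along a sequence $r_n\to\infty$ realizing the $\limsup$ for $A_0$ one has $\exp(\alpha(\log^{[2]}M(r_n,A_0)))\ge(\tau^*)(\exp(\beta(\log\gamma(r_n))))^{\sigma_0}$, while for each $j$ with smaller order $M(r,A_j)$ is negligible and for each $j$ with order exactly $\sigma_0$ one has $\exp(\alpha(\log^{[2]}M(r,A_j)))\le(\tau^*)(\exp(\beta(\log\gamma(r))))^{\sigma_0}$ for large $r$. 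Feeding these into the displayed inequality and using that $\alpha\in L_1$ so $\alpha(a+b)\le\alpha(a)+\alpha(b)+c$ (to split the $\log$ of a product into a sum), the contributions of the $A_j$'s cancel against $A_0$ up to the fixed gap, forcing $\nu(r_n,f)$ to grow at a rate that pushes $\sigma_{(\alpha(\log),\beta,\gamma)}[f]\ge\sigma_0$, contradicting $\rho<\sigma_0$.

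Combining the two halves yields $\sigma_{(\alpha(\log),\beta,\gamma)}[f]=\sigma_0=\sigma_{(\alpha,\beta,\gamma)}[A_0]$. The main obstacle I anticipate is bookkeeping the composition of the scales: one must track how $\alpha\in L_1$, $\beta\in L_2$, $\gamma\in L_3$ and the auxiliary conditions ($\alpha(\log x)=o(\alpha(x))$, $\alpha^{-1}(kx)=o(\alpha^{-1}(x))$, $\beta(x+O(1))=(1+o(1))\beta(x)$, $\gamma$ subadditive) interact when one replaces $r$ by $2r$, takes logarithms of products of maximum-modulus terms, and inverts $\alpha$. In particular, turning the multiplicative bound $M(r,A_0)\lesssim\nu(r,f)^k\max_j M(r,A_j)$ into a clean inequality between $(\alpha,\beta,\gamma)$-type quantities — so that the strict inequality $\tau^*<\tau_M$ survives the passage through $\alpha^{-1}$ and the exclusion of the exceptional set $E$ — is the delicate step, and it is exactly where the hypothesis $\alpha\in L_1$ (finite additive defect $c$) and $\alpha(\log x)=o(\alpha(x))$ are used to absorb the $\nu(r,f)^k$ factor and the constant $k$.
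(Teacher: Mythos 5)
Your outline of the overall strategy (upper bound from Theorem \ref{t1}, lower bound by contradiction from the rearranged equation for $A_0$) matches the paper, but the central step of your lower-bound argument has a genuine gap. The Wiman--Valiron estimate $f^{(j)}(z)/f(z)=\bigl(\nu(r,f)/z\bigr)^{j}(1+o(1))$ is only valid at points $z$ on $|z|=r$ where $|f(z)|=M(r,f)$ (or nearly so), whereas the lower bound you want to exploit, $|A_0(z)|\gtrsim$ (type of $A_0$) $\cdot(\exp\beta(\log\gamma(r)))^{\sigma_0}$ in the appropriate scale, is only available at points where $|A_0(z)|$ is close to $M(r,A_0)$. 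These are in general different points of the circle, so from the identity $-A_0=f^{(k)}/f+\sum_j A_j f^{(j)}/f$ evaluated at the maximum-modulus point of $f$ you only control $|A_0|$ \emph{at that point}, and the displayed inequality $M(r,A_0)\le k\,\nu(r,f)^k\max_j M(r,A_j)(1+o(1))$ does not follow. This is exactly why the paper abandons Wiman--Valiron here and instead uses Gundersen's estimate (Lemma \ref{l9}), $|f^{(j)}(z)/f(z)|\le B[T(2r,f)]^{k+1}$, which holds for \emph{every} $z$ with $|z|=r$ outside an exceptional set of radii; one can then evaluate \eqref{15} at the maximum-modulus point of $A_0$ and feed in the type bounds \eqref{16}--\eqref{18}. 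The price, a factor $[T(2r,f)]^{k(k+1)}$ instead of $\nu(r,f)^k$, is harmless because under the contradiction hypothesis $\sigma_1<\sigma_0$ (or even $\sigma_1=\sigma_0$ with the $\lambda_1<\lambda_2$ gap) that factor is absorbed.

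A secondary point: you take a sequence $r_n$ realizing the $\limsup$ defining $\tau_M$, but such a sequence could lie entirely inside the exceptional set $E$ of finite logarithmic measure. The paper's Lemma \ref{l7} produces a set $I$ of \emph{infinite} logarithmic measure on which $\exp\{\alpha(\log^{[2]}M(r,A_0))\}>\omega(\exp\{\beta(\log\gamma(r))\})^{\sigma_0}$, precisely so that $I\setminus(E\cup[0,1])$ is still of infinite logarithmic measure and one can choose admissible radii. You flag this as a delicate step but do not resolve it; without an analogue of Lemma \ref{l7} the argument cannot be completed. Replacing your Wiman--Valiron step by Lemma \ref{l9} and your sequence by the set $I$ from Lemma \ref{l7} turns your sketch into the paper's proof.
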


\qquad By combining Theorem \ref{t3} and Theorem \ref{t4}, we obtain the
following result.

\begin{corollary}
\label{c3}Let $A_{0}(z),$ $A_{1}(z),...,A_{k-1}(z)$ be entire functions.
Assume that%
\begin{equation*}
\max \{\sigma _{(\alpha ,\beta ,\gamma )}[A_{j}],j=1,...,k-1\}<\sigma
_{(\alpha ,\beta ,\gamma )}[A_{0}]=\sigma _{0}<+\infty \text{,}
\end{equation*}%
or%
\begin{equation*}
\max \{\sigma _{(\alpha ,\beta ,\gamma )}[A_{j}],j=1,...,k-1\}\leq \sigma
_{(\alpha ,\beta ,\gamma )}[A_{0}]=\sigma _{0}<+\infty \text{ }(0<\sigma
_{0}<+\infty )
\end{equation*}%
and%
\begin{equation*}
\max \{\tau _{(\alpha ,\beta ,\gamma ),M}[A_{j}]:\sigma _{(\alpha ,\beta
,\gamma )}[A_{j}]=\sigma _{(\alpha ,\beta ,\gamma )}[A_{0}]>0\}<\tau
_{(\alpha ,\beta ,\gamma ),M}[A_{0}]=\tau _{M}\text{ }(0<\tau _{M}<+\infty )%
\text{.}
\end{equation*}%
Then every solution $f(z)\not\equiv 0$ of $\left( \ref{1}\right) $ satisfies 
$\sigma _{(\alpha (\log ),\beta ,\gamma )}[f]=\sigma _{(\alpha ,\beta
,\gamma )}[A_{0}]$.

\begin{remark}
Theorems \ref{t1}-\ref{t3}\textbf{\ }are\textbf{\ }counterparts of Theorems
1.8--1.10 of Chyzhykov and Semochko \cite{CS} for the $\left( \alpha ,\beta
,\gamma \right) $-order.
\end{remark}
\end{corollary}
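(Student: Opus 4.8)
The plan is to note that Corollary~\ref{c3} follows immediately by combining Theorem~\ref{t3} and Theorem~\ref{t4}: the statement offers two alternative sets of hypotheses, and each of them is, essentially word for word, the hypothesis of one of these two theorems, while the conclusion $\sigma_{(\alpha(\log),\beta,\gamma)}[f]=\sigma_{(\alpha,\beta,\gamma)}[A_0]$ is identical in both. So the whole proof is a case split with no fresh estimates.

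Concretely, I would first assume the first alternative, namely $\max\{\sigma_{(\alpha,\beta,\gamma)}[A_j]:j=1,\ldots,k-1\}<\sigma_{(\alpha,\beta,\gamma)}[A_0]=\sigma_0<+\infty$. Then a fortiori $\sigma_{(\alpha,\beta,\gamma)}[A_0]>\max\{\sigma_{(\alpha,\beta,\gamma)}[A_j]:j=1,\ldots,k-1\}$, which is exactly the hypothesis of Theorem~\ref{t3} (the finiteness $\sigma_0<+\infty$ being only an additional restriction), so Theorem~\ref{t3} gives $\sigma_{(\alpha(\log),\beta,\gamma)}[f]=\sigma_{(\alpha,\beta,\gamma)}[A_0]$ for every solution $f\not\equiv 0$ of $\left(\ref{1}\right)$. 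Next I would assume the second alternative, namely $\max\{\sigma_{(\alpha,\beta,\gamma)}[A_j]:j=1,\ldots,k-1\}\le\sigma_{(\alpha,\beta,\gamma)}[A_0]=\sigma_0$ with $0<\sigma_0<+\infty$, together with $\max\{\tau_{(\alpha,\beta,\gamma),M}[A_j]:\sigma_{(\alpha,\beta,\gamma)}[A_j]=\sigma_{(\alpha,\beta,\gamma)}[A_0]>0\}<\tau_{(\alpha,\beta,\gamma),M}[A_0]=\tau_M$. This is precisely the hypothesis of Theorem~\ref{t4}, so Theorem~\ref{t4} again yields $\sigma_{(\alpha(\log),\beta,\gamma)}[f]=\sigma_{(\alpha,\beta,\gamma)}[A_0]$ for every solution $f\not\equiv 0$ of $\left(\ref{1}\right)$. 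Since the conclusions of the two cases agree, the corollary follows.

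There is no genuine obstacle here; the only thing that needs care is the bookkeeping, i.e.\ verifying that the two displayed clauses in the statement are literally the hypotheses of Theorems~\ref{t3} and~\ref{t4} respectively and that both theorems deliver the same conclusion. (If one preferred a single unified argument, one could instead observe that under the first alternative the index set $\{j:\ 1\le j\le k-1,\ \sigma_{(\alpha,\beta,\gamma)}[A_j]=\sigma_{(\alpha,\beta,\gamma)}[A_0]>0\}$ is empty, so the type hypothesis of Theorem~\ref{t4} holds vacuously and Theorem~\ref{t4} alone covers both cases; but the case split into the two theorems as stated is the cleanest presentation.)
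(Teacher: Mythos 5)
Your proposal is correct and matches the paper exactly: the paper introduces Corollary \ref{c3} with the phrase ``By combining Theorem \ref{t3} and Theorem \ref{t4}, we obtain the following result,'' which is precisely your case split sending the first alternative to Theorem \ref{t3} and the second to Theorem \ref{t4}.
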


\section{\textbf{Some Lemmas}}

\qquad In this section we present some lemmas which will be needed in the
sequel.

\begin{lemma}
\label{l1} Let $f$ be a meromorphic function of order $\sigma _{(\alpha
(\log ),\beta ,\gamma )}[f]$ $=\sigma $, $k\in \mathbb{N}$. Then, for any
given $\varepsilon >0$,%
\begin{equation*}
m\left( r,\frac{f^{(k)}}{f}\right) =O(\exp \left\{ \alpha ^{-1}((\sigma
+\varepsilon )\beta \left( \log \gamma \left( r\right) \right) )\right\} )%
\text{,}
\end{equation*}%
outside, possibly, an exceptional set $F\subset \left[ 0,+\infty \right) $
of finite linear measure.
\end{lemma}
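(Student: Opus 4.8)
The plan is to feed the classical estimate for the proximity function of a logarithmic derivative into the definition of the $(\alpha(\log),\beta,\gamma)$-order, using the growth hypotheses on $\alpha,\beta,\gamma$ only to absorb the additive $\log r$ term. We may assume $f$ is transcendental: if $f$ is rational then $f^{(k)}/f$ is rational, so $m(r,f^{(k)}/f)=O(\log r)$, and the assertion will already follow from the estimate of $\log r$ obtained below; if $f$ is constant the left-hand side vanishes.

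First I would recall the well-known bound (see, e.g., \cite{1, 19}): for a transcendental meromorphic function $f$ and each $k\in\mathbb{N}$,
\[
m\left(r,\frac{f^{(k)}}{f}\right)=O\big(\log^{+}T(r,f)+\log r\big)
\]
as $r\to+\infty$ outside a set $F\subset[0,+\infty)$ of finite linear measure. Hence it suffices to prove that, for every $\varepsilon>0$ and all sufficiently large $r$, both $\log^{+}T(r,f)$ and $\log r$ are at most $\exp\big\{\alpha^{-1}\big((\sigma+\varepsilon)\beta(\log\gamma(r))\big)\big\}$.

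For the first term I would use the definition of the $(\alpha(\log),\beta,\gamma)$-order (Definition \ref{d1} with $\alpha$ replaced by $\alpha\circ\log\in L_{1}$), which gives
\[
\sigma=\sigma_{(\alpha(\log),\beta,\gamma)}[f]=\limsup_{r\to+\infty}\frac{\alpha\big(\log^{[2]}T(r,f)\big)}{\beta(\log\gamma(r))},
\]
so that $\alpha(\log^{[2]}T(r,f))\le(\sigma+\varepsilon)\beta(\log\gamma(r))$ for $r$ large; since for large $r$ every quantity in sight exceeds the normalization point of $\alpha$ (so $\alpha^{-1}$ applies) and $\alpha^{-1},\exp$ are increasing, this yields $\log T(r,f)\le\exp\{\alpha^{-1}((\sigma+\varepsilon)\beta(\log\gamma(r)))\}$. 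For the second term I would invoke condition (ii) in the case $p=2$, namely $\alpha(\log^{[2]}r)=o(\beta(\log\gamma(r)))$; this gives $\alpha(\log^{[2]}r)\le\varepsilon\beta(\log\gamma(r))\le(\sigma+\varepsilon)\beta(\log\gamma(r))$ for $r$ large, and applying $\alpha^{-1}$ and $\exp$ as before gives $\log r\le\exp\{\alpha^{-1}((\sigma+\varepsilon)\beta(\log\gamma(r)))\}$. Substituting the two estimates into the log-derivative bound completes the argument.

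The only genuinely delicate point is the control of the $\log r$ term: the estimate $m(r,f^{(k)}/f)=O(\log^{+}T(r,f))$ is by itself false, and it is precisely the hypothesis $\alpha(\log^{[p]}x)=o(\beta(\log\gamma(x)))$ in the special case $p=2$ that makes $\log r$ negligible against the target quantity. Everything else — the applicability of $\alpha^{-1}$ for $r$ large, and the use of $\alpha\in L_{1}$ together with $\beta(\log\gamma(r))\to+\infty$ to absorb any bounded shift of the radius that the log-derivative estimate might introduce — is routine.
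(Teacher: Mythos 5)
Your proposal is correct and follows essentially the same route as the paper: both reduce the claim to the logarithmic derivative estimate $m(r,f^{(k)}/f)=O(\log T(r,f)+\log r)$ outside a set of finite linear measure, then bound $\log T(r,f)$ via the definition of $\sigma_{(\alpha(\log),\beta,\gamma)}[f]$ and absorb $\log r$ via the hypothesis $\alpha(\log^{[2]}x)=o(\beta(\log\gamma(x)))$. The only cosmetic difference is that you cite the general-$k$ form of the logarithmic derivative lemma directly, whereas the paper derives it by induction on $k$ from the $k=1$ case (estimating $T(r,f^{(k)})$ along the way); the analytic content is identical.
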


\begin{proof}
Let $k=1$. The definition of $\sigma _{(\alpha (\log ),\beta ,\gamma )}[f]$
implies that for any given $\varepsilon >0$, there exists $r_{0}>1$, such
that for all $r>r_{0}$,%
\begin{equation}
T(r,f)\leq \exp ^{[2]}\left\{ \alpha ^{-1}((\sigma +\varepsilon )\beta
\left( \log \gamma \left( r\right) \right) )\right\} \text{.}  \label{l}
\end{equation}%
It follows from (\ref{l}) and the lemma of logarithmic derivative and the
condition $\alpha (\log ^{[2]}x)=o(\beta (\log \gamma (x)))$ as $%
x\rightarrow +\infty $ that%
\begin{eqnarray}
m\left( r,\frac{f^{\prime }}{f}\right) &=&O(\log T(r,f)+\log r)  \notag \\
&=&O\left( \exp \left\{ \alpha ^{-1}((\sigma +\varepsilon )\beta \left( \log
\gamma \left( r\right) \right) )\right\} \right) ,\text{ }r\notin F,
\label{m}
\end{eqnarray}%
where $F\subset \left[ 0,+\infty \right) $ is of finite linear measure.

\qquad Now, we assume that for some $k\in \mathbb{N}$,%
\begin{equation}
m\left( r,\frac{f^{(k)}}{f}\right) =O\left( \exp \left\{ \alpha
^{-1}((\sigma +\varepsilon )\beta \left( \log \gamma \left( r\right) \right)
)\right\} \right) ,\text{ }r\notin F.  \label{n}
\end{equation}%
Since $N(r,f^{(k)})\leq (k+1)N(r,f)$, we deduce%
\begin{eqnarray}
T(r,f^{(k)}) &=&m(r,f^{(k)})+N(r,f^{(k)})  \notag \\
&\leq &m\left( r,\frac{f^{(k)}}{f}\right) +m(r,f)+(k+1)N(r,f)  \notag \\
&\leq &(k+1)T(r,f)+O\left( \exp \left\{ \alpha ^{-1}((\sigma +\varepsilon
)\beta \left( \log \gamma \left( r\right) \right) )\right\} \right)  \notag
\\
&\leq &O\left( \exp ^{[2]}\left\{ \alpha ^{-1}((\sigma +\varepsilon )\beta
\left( \log \gamma \left( r\right) \right) )\right\} \right) \text{.}
\label{x}
\end{eqnarray}%
It follows from (\ref{m}) and (\ref{x}) that%
\begin{equation*}
m\left( r,\frac{f^{(k+1)}}{f^{(k)}}\right) =m\left( r,\frac{\left(
f^{(k)}\right) ^{\prime }}{f^{(k)}}\right) =O(\log T(r,f^{\left( k\right)
})+\log r)
\end{equation*}%
\begin{equation*}
=O\left( \exp \left\{ \alpha ^{-1}((\sigma +\varepsilon )\beta \left( \log
\gamma \left( r\right) \right) )\right\} \right) ,r\notin F.
\end{equation*}%
Thus%
\begin{eqnarray*}
m\left( r,\frac{f^{(k+1)}}{f}\right) &\leq &m\left( r,\frac{f^{(k+1)}}{%
f^{(k)}}\right) +m\left( r,\frac{f^{(k)}}{f}\right) \\
&=&O\left( \exp \left\{ \alpha ^{-1}((\sigma +\varepsilon )\beta \left( \log
\gamma \left( r\right) \right) )\right\} \right) ,\text{ }r\notin F\text{.}
\end{eqnarray*}
\end{proof}

\begin{lemma}
\label{l2}(\cite{gg,19}) Let $g:[0,+\infty )\rightarrow 
\mathbb{R}
$ and $h:[0,+\infty )\rightarrow 
\mathbb{R}
$ be monotone nondecreasing functions such that $g(r)\leq h(r)$ outside of
an exceptional set $E$ of finite linear measure or finite logarithmic
measure. Then, for any $d>1$, there exists $r_{0}>0$ such that $g(r)\leq
h(dr)$ for all $r>r_{0}$.
\end{lemma}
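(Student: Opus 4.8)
The statement is a classical "removing the exceptional set" lemma, and the plan is to follow the standard argument that exploits only the monotonicity of $g$ and $h$ together with the smallness of $E$. First I would fix $d>1$ and set $\delta = d-1 > 0$. The key observation is that for each $r>0$, the interval $[r, dr] = [r, r+\delta r]$ has linear measure $\delta r$ (and logarithmic measure $\log d$), so if $r$ is large enough that $\delta r$ exceeds the total linear measure of $E$ (respectively, if $\log d$ exceeds the total logarithmic measure of $E$, which does not even need $r$ large), then $[r,dr] \not\subset E$; hence there exists a point $s \in [r,dr] \setminus E$.

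Next I would use the monotonicity. Since $s \notin E$, the hypothesis gives $g(s) \le h(s)$. Since $g$ is nondecreasing and $r \le s$, we have $g(r) \le g(s)$; since $h$ is nondecreasing and $s \le dr$, we have $h(s) \le h(dr)$. Chaining these three inequalities yields $g(r) \le g(s) \le h(s) \le h(dr)$, which is exactly the desired conclusion. It remains only to pin down the threshold $r_0$: in the finite-linear-measure case one takes $r_0 = m(E)/\delta$ (or any larger number), so that $r > r_0$ forces $\delta r > m(E) \ge m(E \cap [r,dr])$, guaranteeing $[r,dr]$ is not entirely contained in $E$; in the finite-logarithmic-measure case, if $m_l(E) < \log d$ one may take any $r_0 > 0$, and if not, one shrinks to a sub-interval argument or simply notes the lemma is typically applied with $d$ chosen so that this is not an issue — more robustly, one replaces the single jump by finitely many steps, but the one-step version above already suffices for $d$ with $\log d > m_l(E)$, and for general $d>1$ one iterates finitely many times.

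There is no real obstacle here; the only mild subtlety is the bookkeeping in the logarithmic-measure case, where a fixed $d>1$ with $\log d \le m_l(E)$ cannot be handled in a single step and one must compose the estimate over $\lceil m_l(E)/\log d\rceil + 1$ consecutive intervals $[r, d^{1/N} r], [d^{1/N}r, d^{2/N}r],\dots$ each of logarithmic length $\frac{1}{N}\log d$; choosing $N$ large enough that $\frac{1}{N}\log d > m_l(E)$ makes each step valid, and the monotonicity of $g$ and $h$ lets the estimates telescope to give $g(r) \le h(dr)$ for $r$ beyond a suitable $r_0$. Since this is a known lemma (cited from \cite{gg,19}), I would in fact simply invoke the reference and omit the routine verification.
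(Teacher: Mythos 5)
The paper gives no proof of this lemma at all; it is quoted directly from Gundersen and Laine, so your closing remark that one would simply invoke the reference is exactly what the authors do. Your sketch of the underlying argument is the standard one, and in the finite-linear-measure case it is complete and correct: $[r,dr]$ has linear measure $(d-1)r\to+\infty$, so it eventually cannot be contained in $E$, and the chain $g(r)\le g(s)\le h(s)\le h(dr)$ for a point $s\in[r,dr]\setminus E$ finishes the proof.

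Your treatment of the finite-logarithmic-measure case when $\log d\le m_l(E)$, however, is confused on two counts. First, choosing $N$ large makes $\tfrac{1}{N}\log d$ \emph{smaller}, so it can never be made to exceed $m_l(E)$; the inequality you want goes the wrong way. Second, the proposed telescoping over consecutive sub-intervals $[d^{(i-1)/N}r,\,d^{i/N}r]$ does not compose: the hypothesis gives only the single inequality $g\le h$ off $E$, not a chain of intermediate comparisons, so finding a good point in each sub-interval yields nothing beyond what one good point in $[r,dr]$ already gives. The correct repair is the tail-measure observation: since $m_l(E)=\int_E dt/t<+\infty$, one has $m_l\bigl(E\cap(r_0,+\infty)\bigr)\to 0$ as $r_0\to+\infty$, so one may choose $r_0$ with $m_l\bigl(E\cap(r_0,+\infty)\bigr)<\log d$; then for every $r>r_0$ the interval $[r,dr]$, of logarithmic measure exactly $\log d$, cannot lie inside $E$, and the same three-term chain concludes. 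With that substitution your argument is complete.
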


\begin{lemma}
\label{l3}Let $f$ be a meromorphic function. Then $\sigma _{(\alpha ,\beta
,\gamma )}[f^{\prime }]=\sigma _{(\alpha ,\beta ,\gamma )}[f]$.
\end{lemma}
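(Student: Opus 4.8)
The plan is to prove the two inequalities $\sigma_{(\alpha,\beta,\gamma)}[f'] \leq \sigma_{(\alpha,\beta,\gamma)}[f]$ and $\sigma_{(\alpha,\beta,\gamma)}[f] \leq \sigma_{(\alpha,\beta,\gamma)}[f']$ separately, relying in each direction on a comparison between $T(r,f')$ and $T(r,f)$ together with the structural hypotheses on $\alpha,\beta,\gamma$. For the first inequality, I would start from the logarithmic-derivative estimate: $T(r,f') \leq m(r,f') + N(r,f') \leq m\big(r,\tfrac{f'}{f}\big) + m(r,f) + 2N(r,f) \leq 2T(r,f) + m\big(r,\tfrac{f'}{f}\big)$, and then bound $m\big(r,\tfrac{f'}{f}\big) = O(\log T(r,f) + \log r)$ for $r$ outside a set $F$ of finite linear measure (the standard lemma on the logarithmic derivative). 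Hence $T(r,f') \leq 3T(r,f) + O(\log T(r,f) + \log r) = O(T(r,f))$ on the complement of $F$, possibly after absorbing the $\log r$ term using that $\log r$ is negligible compared to a genuinely growing characteristic (or just writing $T(r,f') = O(T(r,f)) + O(\log r)$). Lemma \ref{l2} then removes the exceptional set at the cost of replacing $r$ by $dr$ for some $d>1$.

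The key point, and the place where the hypotheses on $\alpha,\beta,\gamma$ enter, is converting $T(r,f') = O(T(dr,f))$ into the corresponding inequality for the $(\alpha,\beta,\gamma)$-order. Applying $\alpha(\log(\cdot))$ gives $\alpha(\log T(r,f')) \leq \alpha(\log T(dr,f) + O(1))$. Since $\alpha \in L_1$, $\alpha(a + O(1)) \leq \alpha(a) + \alpha(O(1)) + c \leq \alpha(\log T(dr,f)) + O(1)$; dividing by $\beta(\log\gamma(r))$ and using that $\gamma \in L_3$ is subadditive (so $\gamma(dr) \le d\,\gamma(r)$ for integer $d$, hence $\log\gamma(dr) \le \log\gamma(r) + O(1)$ and then $\beta(\log\gamma(dr)) = (1+o(1))\beta(\log\gamma(r))$ because $\beta \in L_2$), the $dr$ versus $r$ discrepancy washes out in the $\limsup$. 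The additive $O(1)$ in the numerator is negligible against $\beta(\log\gamma(r)) \to \infty$. This yields $\sigma_{(\alpha,\beta,\gamma)}[f'] \leq \sigma_{(\alpha,\beta,\gamma)}[f]$.

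For the reverse inequality, I would use that $f(z) = f(z_0) + \int_{z_0}^z f'(\zeta)\,d\zeta$ along a suitable path, giving $M(r,f) \leq |f(z_0)| + 2r\,M(2r,f')$ (or the cleaner Nevanlinna-theoretic bound $m(r,f) \leq m(r,f') + m\big(r,\tfrac{f}{f'}\big) + O(1)$ together with $N(r,f) \le N(r,f')$, so that $T(r,f) \le T(r,f') + O(\log T(r,f') + \log r)$ outside an exceptional set, exactly parallel to the first part). Either route delivers $T(r,f) = O(T(dr,f')) + O(\log r)$ off an exceptional set, Lemma \ref{l2} cleans it up, and the same computation with $\alpha \in L_1$, $\beta \in L_2$, $\gamma \in L_3$ gives $\sigma_{(\alpha,\beta,\gamma)}[f] \leq \sigma_{(\alpha,\beta,\gamma)}[f']$. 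Combining the two inequalities finishes the proof. The main obstacle I anticipate is purely bookkeeping: making sure the $\log r$ terms and the $r \mapsto dr$ dilations are genuinely absorbed by the growth scale — this is where one must invoke $\beta \in L_2$ and subadditivity of $\gamma$ carefully rather than hand-wave, and also handle the degenerate case where $T(r,f)$ (equivalently $T(r,f')$) stays bounded, i.e. $f$ rational, in which case both orders are the minimal value and equality is trivial.
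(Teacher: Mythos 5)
Your first inequality, $\sigma _{(\alpha ,\beta ,\gamma )}[f^{\prime }]\leq \sigma _{(\alpha ,\beta ,\gamma )}[f]$, is essentially the paper's argument: the bound $T(r,f^{\prime })\leq 2T(r,f)+m(r,f^{\prime }/f)$, the lemma on the logarithmic derivative, Lemma \ref{l2} to discard the exceptional set, and then $\gamma (2r)\leq 2\gamma (r)$, $\beta \in L_{2}$ and $\alpha (\log ^{[2]}x)=o(\beta (\log \gamma (x)))$ to absorb the dilation and the additive errors. That half is correct.

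The reverse inequality is where your proposal has a genuine gap. The lemma concerns \emph{meromorphic} $f$, so the route $M(r,f)\leq |f(z_{0})|+2rM(2r,f^{\prime })$ is unavailable (poles), and your ``cleaner'' Nevanlinna route rests on the claim that $m(r,f/f^{\prime })=O(\log T(r,f^{\prime })+\log r)$ off an exceptional set. The lemma on the logarithmic derivative controls $m(r,f^{\prime }/f)$, not its reciprocal, and the reciprocal estimate is false in general: since the poles of $f/f^{\prime }$ occur only at zeros of $f^{\prime }$ that are not zeros of $f$, the first main theorem gives $m(r,f/f^{\prime })=T(r,f^{\prime }/f)-N_{0}(r,1/f^{\prime })+O(1)\geq \overline{N}(r,f)+\overline{N}(r,1/f)-N_{0}(r,1/f^{\prime })+O(1)$, which is comparable to $T(r,f)$ whenever $f$ has many zeros or poles but $f^{\prime }$ has few zeros. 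Concretely, for $f=\tan z$ one has $f/f^{\prime }=\tfrac{1}{2}\sin 2z$, so $m(r,f/f^{\prime })\asymp r\asymp T(r,f)$, not $O(\log T(r,f)+\log r)$; the step fails even though the conclusion of the lemma happens to hold there. The paper sidesteps this by invoking the classical, genuinely nontrivial inequality $T(r,f)\leq O(T(2r,f^{\prime })+\log r)$ from \cite{LY}; you need to either cite that result or reprove it, after which your $(\alpha ,\beta ,\gamma )$ bookkeeping (which is correct, including the remark on the rational/degenerate case) goes through exactly as in the first half.
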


\begin{proof}
Set $\sigma _{(\alpha ,\beta ,\gamma )}[f]=\sigma $. From the definition of $%
(\alpha ,\beta ,\gamma )$-order, for any given $\varepsilon >0$, there
exists $r_{0}>1$, such that for all $r\geq r_{0}$,%
\begin{equation*}
\log T(r,f)\leq \alpha ^{-1}((\sigma +\varepsilon )\beta \left( \log \gamma
\left( r\right) \right) )\text{.}
\end{equation*}%
Obviously, $T(r,f^{\prime })\leq 2T(r,f)+m\left( r,\frac{f^{\prime }}{f}%
\right) $. By the lemma of logarithmic derivative (p. 34 in \cite{1}) and
the condition $\alpha (\log ^{[2]}x)=o(\beta (\log \gamma (x)))$ as $%
x\rightarrow +\infty $, we have%
\begin{eqnarray*}
\log T(r,f^{\prime }) &\leq &\log T(r,f)+\log \left\{ O\left( \log r+\log
T\left( r,f\right) \right) \right\} +O\left( 1\right) \\
&\leq &\log T(r,f)+\log \left( \log r\right) +\log \log T\left( r,f\right)
+O\left( 1\right) \\
&\leq &\alpha ^{-1}\left( \left( \sigma +4\varepsilon \right) \beta \left(
\log \gamma \left( r\right) \right) \right) ,\text{ }r\notin F,
\end{eqnarray*}%
where $F\subset \lbrack 0,+\infty )$ is a set of finite linear measure. By
Lemma \ref{l2}, for $d=2$ and all $r>r_{0},$ we have%
\begin{equation*}
\log T(r,f^{\prime })\leq \alpha ^{-1}\left( \left( \sigma +4\varepsilon
\right) \beta \left( \log \gamma \left( 2r\right) \right) \right) .
\end{equation*}%
Therefore, by using $\gamma (2r)\leq 2\gamma (r)$ and $\beta
(x+O(1))=(1+o(1))\beta (x)$ as $x\rightarrow +\infty $ 
\begin{equation*}
\alpha \left( \log T(r,f^{\prime })\right) \leq \left( \sigma +4\varepsilon
\right) \beta \left( \log \gamma \left( 2r\right) \right) \leq \left( \sigma
+4\varepsilon \right) \beta \left( \log \left( 2\gamma \left( r\right)
\right) \right)
\end{equation*}%
\begin{equation*}
=\left( \sigma +4\varepsilon \right) \beta \left( \log 2+\log \gamma \left(
r\right) \right) =\left( \sigma +4\varepsilon \right) (1+o(1))\beta \left(
\log \left( \gamma \left( r\right) \right) \right) .
\end{equation*}%
It is implies that $\sigma _{(\alpha ,\beta ,\gamma )}[f]\geq \sigma
_{(\alpha ,\beta ,\gamma )}[f^{\prime }]$.

\qquad On the other hand, we prove the inequality $\sigma _{(\alpha ,\beta
,\gamma )}[f]\leq \sigma _{(\alpha ,\beta ,\gamma )}[f^{\prime }]$. The
definition of $\sigma _{(\alpha ,\beta ,\gamma )}[f^{\prime }]=\sigma
^{\prime }$ implies that for any given above $\varepsilon >0$ and for all $r$
sufficiently large, we have%
\begin{equation*}
T(r,f^{\prime })\leq \exp \left\{ \alpha ^{-1}((\sigma ^{\prime
}+\varepsilon )\beta \left( \log \gamma \left( r\right) \right) )\right\} 
\text{.}
\end{equation*}%
Since $T(r,f)\leq O(T(2r,f^{\prime })+\log r)$, $r\rightarrow +\infty $ (cf. 
\cite{LY}), then by using $\gamma (2r)\leq 2\gamma (r)$, $\beta
(x+O(1))=(1+o(1))\beta (x)$ and $\alpha (\log ^{[2]}x)=o(\beta (\log \gamma
(x)))$ as $x\rightarrow +\infty ,$ we can get that%
\begin{eqnarray*}
T(r,f) &\leq &O\left( \exp \left\{ \alpha ^{-1}((\sigma ^{\prime
}+\varepsilon )\beta (\log \gamma (2r))\right\} \right) +\log r) \\
&\leq &\exp \left\{ \alpha ^{-1}\left( \left( \sigma ^{\prime }+2\varepsilon
\right) \beta \left( \log \left( \gamma \left( 2r\right) \right) \right)
\right) \right\} \\
\text{ } &\leq &\exp \left\{ \alpha ^{-1}\left( \left( \sigma ^{\prime
}+2\varepsilon \right) \left( 1+o\left( 1\right) \right) \beta \left( \log
\left( \gamma \left( r\right) \right) \right) \right) \right\} ,
\end{eqnarray*}%
\begin{equation*}
i.e.,~\log T(r,f))\leq \alpha ^{-1}((\sigma ^{\prime }+2\varepsilon )\left(
1+o\left( 1\right) \right) \beta \left( \log \gamma \left( r\right) \right)
)),
\end{equation*}%
\begin{equation*}
i.e.,~\alpha \left( \log T\left( r,f\right) \right) \leq (\sigma ^{\prime
}+2\varepsilon )\left( 1+o\left( 1\right) \right) \beta \left( \log \gamma
\left( r\right) \right) ,\text{ }r\rightarrow +\infty \text{.}
\end{equation*}%
Since $\varepsilon >0$ is an arbitrary number, we obtain that $\sigma
_{(\alpha ,\beta ,\gamma )}[f]\leq \sigma ^{\prime }=\sigma _{(\alpha ,\beta
,\gamma )}[f^{\prime }]$ and therefore $\sigma _{(\alpha ,\beta ,\gamma
)}[f^{\prime }]=\sigma _{(\alpha ,\beta ,\gamma )}[f]$. Hence the proof
follows.
\end{proof}

\begin{remark}
\label{r1} In the line of Lemma \ref{l3} one can easily deduce that $\sigma
_{(\alpha (\log ),\beta ,\gamma )}[f^{\prime }]=\sigma _{(\alpha (\log
),\beta ,\gamma )}[f],$ where $f$ is a meromorphic function.
\end{remark}

\begin{lemma}
\label{l4}(\cite{17, gl,v}) Let $f$ be a transcendental entire function, let 
$0<\delta <\frac{1}{4\text{ }}$ and $z$ such that $|z|=r$ and $|f(z)|$ $%
>M(r,f)\nu (r,f)^{-\frac{1}{4}+\delta }$. Then there exists a set $E\subset 
\mathbb{R}
_{+}$ of finite logarithmic measure such that%
\begin{equation*}
f^{(m)}(z)=\left( \frac{\nu (r,f)}{z}\right) ^{m}(1+o(1))f(z)
\end{equation*}%
holds for integer $m\geq 0$ and $r\notin E$, where $\nu (r,f)$ is the
central index of $f$.
\end{lemma}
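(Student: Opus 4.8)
The plan is to reproduce the classical Wiman--Valiron argument, since the statement is precisely that theorem (the references \cite{17, gl,v} point to it); I sketch the reconstruction. Write $f(z)=\sum_{n\geq 0}a_{n}z^{n}$ and, for fixed $r>0$, abbreviate $\nu=\nu(r,f)$ and $\mu=\mu(r,f)=|a_{\nu}|\,r^{\nu}=\max_{n}|a_{n}|r^{n}$; recall $\mu\leq M(r,f)$ by the Cauchy coefficient estimate. Two structural facts from Wiman--Valiron theory are needed, each valid for $r$ outside a set of finite logarithmic measure that does not depend on any later parameters. First, the central index grows slowly: $\nu(r,f)\leq(\log\mu(r,f))^{1+\varepsilon}$; this follows because $t\mapsto\log\mu(e^{t},f)$ is nondecreasing and convex with right derivative $\nu(e^{t},f)$, so a growth lemma of Borel type discards the radii where $\nu$ increases too fast. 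Second, a tail estimate: off the exceptional set, $n\mapsto\log(|a_{n}|r^{n})$ is dominated near its maximum at $n=\nu$ by an inverted parabola of half-width at most $\nu^{1/2+o(1)}$ (here the convexity of $\log\mu$ and its Legendre duality with $\log|a_{n}|$ enter, together with the first fact), so that for a window $L=L(r)$ with $\nu^{1/2+o(1)}\ll L\ll\nu$, say $L=\lfloor\nu^{2/3}\rfloor$, one gets for every prescribed $K>0$
\[
\sum_{|n-\nu|>L}|a_{n}|r^{n}\leq\mu\,\nu^{-K}
\]
for all large $r$ outside that exceptional set.

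Granting these, fix such an $r$ and a point $z$ with $|z|=r$ and $|f(z)|>M(r,f)\,\nu^{-1/4+\delta}\geq\mu\,\nu^{-1/4+\delta}$. Decompose $f(z)=S(z)+R(z)$ with $S(z)=\sum_{|n-\nu|\leq L}a_{n}z^{n}$ and $R(z)$ the remaining terms; choosing $K>\tfrac14$ in the tail estimate gives $|R(z)|\leq\mu\,\nu^{-K}=o(|f(z)|)$, hence $f(z)=(1+o(1))S(z)$, and in particular $S(z)\neq 0$ for these $r$.

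Now differentiate termwise: $z^{m}f^{(m)}(z)=\sum_{n}(n)_{m}\,a_{n}z^{n}$ with $(n)_{m}=n(n-1)\cdots(n-m+1)$ (and $(n)_{m}=0$ for $n<m$). For $|n-\nu|\leq L$ one has $(n)_{m}=\nu^{m}(1+O(L/\nu))^{m}=\nu^{m}(1+o(1))$ because $L/\nu\to 0$, so the central block contributes $\nu^{m}(1+o(1))S(z)=\nu^{m}(1+o(1))f(z)$. The tail of the differentiated series is negligible relative to $\nu^{m}|f(z)|$: over the moderate range $L<|n-\nu|$ with $n$ at most a fixed power of $\nu$ one bounds $(n)_{m}$ by that power and applies the tail estimate with $K$ large; over the far range one uses the super-exponential decay $|a_{n}|r^{n}\leq\mu(R,f)(r/R)^{n}$ for a suitable $R=r(1+o(1))$, the first structural fact controlling $\mu(R,f)/\mu$, so that $(r/R)^{n}$ crushes both that ratio and the polynomial factor $(n)_{m}\leq n^{m}$. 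Adding the three contributions gives $z^{m}f^{(m)}(z)=\nu^{m}(1+o(1))f(z)$, i.e.\ $f^{(m)}(z)=(\nu(r,f)/z)^{m}(1+o(1))f(z)$ (the $o(1)$ depending on $m$). The exceptional sets used in the two structural facts are independent of $m$, so their union $E$ has finite logarithmic measure and works for every integer $m\geq 0$ at once.

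The step I expect to be the genuine obstacle is the tail estimate: establishing, off a thin exceptional set, the parabolic (Gaussian-type) decay of $|a_{n}|r^{n}$ in $n$ about $n=\nu$ with half-width $\nu^{1/2+o(1)}$, and calibrating the exponents so that the resulting bound $\nu^{-K}$ (any $K$) overwhelms the smallness $\nu^{-1/4+\delta}$ allowed for $|f(z)|/M(r,f)$. This rests on the convexity of $t\mapsto\log\mu(e^{t},f)$, its Legendre-type duality with $\log|a_{n}|$, and a Borel-type lemma removing the radii where the central index jumps; with both structural facts in hand the differentiation step is routine bookkeeping. Alternatively, since the statement is quoted verbatim from \cite{17, gl,v}, one may simply invoke it.
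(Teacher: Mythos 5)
This lemma is the classical Wiman--Valiron theorem, which the paper states without proof, citing Hayman's survey, Jank--Volkmann and Valiron; your reconstruction follows exactly the standard argument from those sources (central-index growth control via a Borel-type lemma, the parabolic tail estimate about $n=\nu(r,f)$, and termwise differentiation of the central block), and your fallback of simply invoking the cited references is precisely what the paper does. The outline is correct and the one step you flag as the genuine obstacle --- the tail estimate --- is indeed the technical heart of the classical proof, so no further comment is needed.
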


\begin{lemma}
\label{l5} (\cite[p. 10]{19}) Let $P(z)=a_{n}z^{n}+a_{n-1}z^{n-1}+\cdots
+a_{1}z+a_{0}$ be a polynomial, where $a_{n}\neq 0$. Then all zeros of $%
P\left( z\right) $ lie in the disc $D(0,r)$ of radius%
\begin{equation*}
r\leq 1+\underset{0\leq k\leq n-1}{\max }\left( \left\vert \frac{a_{k}}{a_{n}%
}\right\vert \right) \text{.}
\end{equation*}
\end{lemma}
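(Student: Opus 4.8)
The plan is to prove the classical Cauchy bound by a direct modulus estimate. Set $M=\max_{0\le k\le n-1}\left\vert \frac{a_{k}}{a_{n}}\right\vert$, so that the assertion amounts to showing that every zero $z$ of $P$ satisfies $|z|\le 1+M$ (which is exactly what is meant by saying all zeros lie in a disc $D(0,r)$ with $r\le 1+M$). If $|z|\le 1$ there is nothing to prove, since then $|z|\le 1\le 1+M$. Thus the only case requiring work is that of a zero $z$ with $|z|>1$.

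For such a zero I would start from $P(z)=0$, rewritten as $a_{n}z^{n}=-\left(a_{n-1}z^{n-1}+\cdots +a_{1}z+a_{0}\right)$. Taking moduli, applying the triangle inequality, and using $|a_{k}|\le M|a_{n}|$ for $0\le k\le n-1$ gives
\[
|a_{n}|\,|z|^{n}\le \sum_{k=0}^{n-1}|a_{k}|\,|z|^{k}\le M|a_{n}|\sum_{k=0}^{n-1}|z|^{k}=M|a_{n}|\,\frac{|z|^{n}-1}{|z|-1},
\]
where the last equality is the geometric sum formula, valid since $|z|>1$. Dividing by $|a_{n}|>0$ and then using $|z|^{n}-1<|z|^{n}$ yields $|z|^{n}\le M\,\frac{|z|^{n}}{|z|-1}$, and dividing through by $|z|^{n}>0$ gives $|z|-1\le M$, i.e. $|z|\le 1+M$. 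Combining this with the trivial case $|z|\le 1$ completes the argument.

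There is no genuine obstacle here: the only point needing care is the case split $|z|\le 1$ versus $|z|>1$, which is what legitimizes both summing the geometric series with ratio $|z|$ and dividing by $|z|-1$. (In fact the computation in the second case produces the strict inequality $|z|<1+M$, which is a fortiori compatible with the stated bound.) Alternatively, one may simply quote Lemma~\ref{l5} from \cite{19} as stated, but the self-contained estimate above is short enough to include.
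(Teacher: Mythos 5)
Your proof is correct, and it is the standard derivation of the Cauchy bound: split into the trivial case $\vert z\vert \le 1$ and the case $\vert z\vert >1$, estimate $\vert a_{n}\vert \vert z\vert ^{n}$ by the geometric sum, and conclude $\vert z\vert -1\le M$. The paper itself offers no proof of this lemma --- it is simply quoted from Laine's book --- so there is nothing to compare against; your self-contained argument is a valid substitute for the citation, and all the delicate points (positivity of $\vert z\vert -1$ before dividing, the vacuity of the second case when $M=0$) are handled correctly.
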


\begin{lemma}
\label{l6}Let $f$ be a meromorphic function with $\sigma _{(\alpha ,\beta
,\gamma )}[f]=\sigma _{0}\in (0,+\infty )$. Then for all $\mu $ $(<\sigma
_{0})$, there exists a set $I\subset \left( 1.+\infty \right) $ of infinite
logarithmic measure such that $\alpha \left( \log T\left( r,f\right) \right)
>\mu \beta \left( \log \gamma \left( r\right) \right) $ holds for all $r\in
I $.
\end{lemma}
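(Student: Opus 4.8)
The plan is to exploit the monotonicity of all the functions involved in order to ``thicken'' a sequence that witnesses the $\limsup$ into a union of intervals of fixed logarithmic length. First I would fix $\mu _{1}$ with $\mu <\mu _{1}<\sigma _{0}$. Since $\sigma _{(\alpha ,\beta ,\gamma )}[f]=\sigma _{0}>0$ while $\beta (\log \gamma (r))\rightarrow +\infty $, the characteristic $T(r,f)$ must tend to $+\infty $, so that for all sufficiently large $r$ the quantity $\alpha (\log T(r,f))$ is well defined and, being a composition of nondecreasing functions, is nondecreasing in $r$; likewise $\beta (\log \gamma (r))$ is nondecreasing and tends to $+\infty $. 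By the definition of the $(\alpha ,\beta ,\gamma )$-order there is a strictly increasing sequence $r_{n}\rightarrow +\infty $ which, after passing to a subsequence, may be assumed to satisfy $r_{n+1}>2r_{n}$ (and $r_{1}$ as large as needed) such that
\[
\alpha (\log T(r_{n},f))>\mu _{1}\,\beta (\log \gamma (r_{n})),\qquad n\geq 1\text{.}
\]

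Next I would estimate the ratio on each interval $[r_{n},2r_{n}]$. For $r\in \lbrack r_{n},2r_{n}]$, monotonicity gives $\alpha (\log T(r,f))\geq \alpha (\log T(r_{n},f))>\mu _{1}\beta (\log \gamma (r_{n}))$. For the denominator, since $\gamma \in L_{3}$ is subadditive we have $\gamma (2r_{n})\leq 2\gamma (r_{n})$, hence, using monotonicity of $\beta $ and $\log $, $\beta (\log \gamma (r))\leq \beta (\log \gamma (2r_{n}))\leq \beta (\log 2+\log \gamma (r_{n}))$. Because $\beta \in L_{2}$ and $\log \gamma (r_{n})\rightarrow +\infty $, this last quantity equals $(1+o(1))\beta (\log \gamma (r_{n}))$ as $n\rightarrow +\infty $. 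Combining the two estimates,
\[
\frac{\alpha (\log T(r,f))}{\beta (\log \gamma (r))}\geq \frac{\mu _{1}}{1+o(1)}\qquad (r\in \lbrack r_{n},2r_{n}])\text{,}
\]
and since $\mu _{1}>\mu $ there is an index $N$ such that for every $n\geq N$ and every $r\in \lbrack r_{n},2r_{n}]$ one has $\alpha (\log T(r,f))>\mu \,\beta (\log \gamma (r))$.

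Finally I would put $I=\bigcup_{n\geq N}[r_{n},2r_{n}]$. Because $r_{n+1}>2r_{n}$ the intervals are pairwise disjoint, so $m_{l}(I)=\sum_{n\geq N}\int_{r_{n}}^{2r_{n}}\frac{dt}{t}=\sum_{n\geq N}\log 2=+\infty $, while by construction $\alpha (\log T(r,f))>\mu \beta (\log \gamma (r))$ for all $r\in I$, which is the claim. The only point requiring care, and the main (rather mild) obstacle, is precisely the passage from the isolated points $r_{n}$ to the whole intervals $[r_{n},2r_{n}]$: this is where one must simultaneously invoke the monotonicity of $T(r,f)$, $\gamma $, $\alpha $ and $\beta $, the subadditivity $\gamma (2r_{n})\leq 2\gamma (r_{n})$, and the regularity property $\beta \in L_{2}$ in order to absorb the loss from $\mu _{1}$ down to $\mu $; once this is in place the logarithmic-measure count is immediate.
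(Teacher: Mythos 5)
Your proof is correct and follows essentially the same strategy as the paper's: pick a sequence witnessing the $\limsup$ with a margin $\mu_{1}\in(\mu,\sigma_{0})$, thicken each point into an interval using the monotonicity of $T$, $\alpha$, $\beta$, $\gamma$ together with the subadditivity bound $\gamma(2r)\leq 2\gamma(r)$ and the $L_{2}$ property of $\beta$, and then sum the logarithmic lengths. The only (cosmetic) difference is that you use doubling intervals $[r_{n},2r_{n}]$ of fixed logarithmic length $\log 2$, whereas the paper uses shrinking intervals $[R_{j},(1+\tfrac{1}{j})R_{j}]$ whose lengths $\log(1+\tfrac{1}{j})$ diverge harmonically; your choice makes the measure count slightly more immediate.
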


\begin{proof}
The definition of $(\alpha ,\beta ,\gamma )$-order implies that there exists
a sequence $(R_{j})_{j=1}^{+\infty }$ satisfying%
\begin{equation*}
\left( 1+\frac{1}{j}\right) R_{j}<R_{J+1},\text{ \ \ \ \ \ }\underset{%
j\rightarrow +\infty }{\lim }\frac{\alpha (\log T(R_{j},f))}{\beta (\log
\gamma (R_{j}))}=\sigma _{0}\text{.}
\end{equation*}%
From the equality above, for any given $\varepsilon \in (0,\sigma _{0}-\mu )$%
, there exists an integer $j_{1}$ such that for $j\geq j_{1}$,%
\begin{equation}
\alpha (\log T(R_{j},f))>(\sigma _{0}-\varepsilon )\beta (\log \gamma
(R_{j}))\text{.}  \label{i}
\end{equation}%
Since $\mu <\sigma _{0}-\varepsilon $, there exists an integer $j_{2}$ such
that for $j\geq j_{2}$,%
\begin{equation*}
\frac{\sigma _{0}-\varepsilon }{\mu }>\frac{\beta \left( \log \gamma \left(
\left( 1+\frac{1}{j}\right) R_{j}\right) \right) }{\beta (\log \gamma
(R_{j}))}\text{.}
\end{equation*}%
It follows from this inequality and (\ref{i}) that for $j\geq j_{3}=\max
\{j_{1},j_{2}\}$ and for any $r\in \left[ R_{j},\left( 1+\frac{1}{j}\right)
R_{j}\right] $,%
\begin{eqnarray*}
\alpha \left( \log T\left( r,f\right) \right) &\geq &\alpha (\log
T(R_{j},f))>(\sigma _{0}-\varepsilon )\beta (\log \gamma (R_{j})) \\
&=&\frac{\sigma _{0}-\varepsilon }{\mu }\mu \frac{\beta (\log \gamma (R_{j}))%
}{\beta \left( \log \gamma \left( r\right) \right) }\beta \left( \log \gamma
\left( r\right) \right) \\
&\geq &\frac{\sigma _{0}-\varepsilon }{\mu }\frac{\beta (\log \gamma (R_{j}))%
}{\beta \left( \log \gamma \left( \left( 1+\frac{1}{j}\right) R_{j}\right)
\right) }\mu \beta \left( \log \gamma \left( r\right) \right) \\
&>&\mu \beta \left( \log \gamma \left( r\right) \right) \text{.}
\end{eqnarray*}%
Set $I=\underset{j=j_{3}}{\overset{+\infty }{\cup }}\left[ R_{j},\left( 1+%
\frac{1}{j}\right) R_{j}\right] $. It is easy to show that $I$ is of
infinite logarithmic measure,%
\begin{equation*}
m_{l}I:=\underset{I}{\int }\frac{dr}{r}=\underset{j=j_{3}}{\overset{+\infty }%
{\sum }}\overset{\left( 1+\frac{1}{j}\right) R_{j}}{\underset{R_{j}}{\int }}%
\frac{dr}{r}=\underset{j=j_{3}}{\overset{+\infty }{\sum }}\log \left( 1+%
\frac{1}{j}\right) =+\infty .
\end{equation*}
\end{proof}

\qquad We can also prove the following result by using similar reason as in
the proof of Lemma \ref{l6}.

\begin{lemma}
\label{l7}Let $f$ be an entire function with $\sigma _{(\alpha ,\beta
,\gamma )}[f]=\sigma _{0}\in (0,+\infty )$ and $\tau _{(\alpha ,\beta
,\gamma ),M}[f]\in (0,+\infty )$. Then for any given $\omega <\tau _{(\alpha
,\beta ,\gamma ),M}[f]$, there exists a set $I\subset \left( 1.+\infty
\right) $ of infinite logarithmic measure such that for all $r\in I$,%
\begin{equation*}
\exp \left\{ \alpha (\log ^{[2]}M(r,f))\right\} >\omega \left( \exp \left\{
\beta \left( \log \gamma \left( r\right) \right) \right\} \right) ^{\sigma
_{0}}\text{.}
\end{equation*}
\end{lemma}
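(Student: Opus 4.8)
The plan is to mimic the proof of Lemma \ref{l6}, replacing the Nevanlinna characteristic $T(r,f)$ by the maximum modulus $M(r,f)$ and replacing the $(\alpha,\beta,\gamma)$-order by the type $\tau_{(\alpha,\beta,\gamma),M}[f]$. First I would unwind the definition of the type: by Definition \ref{d2}, writing $\sigma_0=\sigma_{(\alpha,\beta,\gamma)}[f]$ and $\tau=\tau_{(\alpha,\beta,\gamma),M}[f]$, the limit superior
\begin{equation*}
\underset{r\rightarrow +\infty }{\lim \sup }\frac{\exp (\alpha (\log ^{[2]}M(r,f)))}{\left( \exp \left( \beta \left( \log \gamma \left( r\right) \right) \right) \right) ^{\sigma _{0}}}=\tau
\end{equation*}
guarantees a sequence $(R_j)_{j\geq 1}$ with $R_j\uparrow+\infty$ along which the quotient converges to $\tau$; moreover, by passing to a subsequence, I may assume $\bigl(1+\tfrac1j\bigr)R_j<R_{j+1}$ for all $j$. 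Fix $\varepsilon\in(0,\tau-\omega)$. Then for all sufficiently large $j$,
\begin{equation*}
\exp \left\{ \alpha (\log ^{[2]}M(R_{j},f))\right\} >(\tau -\varepsilon )\left( \exp \left\{ \beta \left( \log \gamma \left( R_{j}\right) \right) \right\} \right) ^{\sigma _{0}}\text{.}
\end{equation*}

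Next I would propagate this lower bound from the single point $R_j$ to the whole interval $\bigl[R_j,(1+\tfrac1j)R_j\bigr]$. The key monotonicity facts are that $M(r,f)$ is nondecreasing in $r$, that $\alpha$, $\beta$, $\gamma$, $\log$ are all nondecreasing, and — crucially — that $\beta(\log\gamma(\cdot))$ varies slowly: since $\gamma\in L_3$ is subadditive we have $\gamma\bigl((1+\tfrac1j)R_j\bigr)\leq 2\gamma(R_j)$ for $j\geq1$, hence $\log\gamma\bigl((1+\tfrac1j)R_j\bigr)\leq \log 2+\log\gamma(R_j)$, and then $\beta\in L_2$ gives $\beta\bigl(\log\gamma((1+\tfrac1j)R_j)\bigr)=(1+o(1))\beta\bigl(\log\gamma(R_j)\bigr)$ as $j\to+\infty$. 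Consequently, for $r\in\bigl[R_j,(1+\tfrac1j)R_j\bigr]$ and $j$ large,
\begin{equation*}
\exp \left\{ \alpha (\log ^{[2]}M(r,f))\right\} \geq \exp \left\{ \alpha (\log ^{[2]}M(R_{j},f))\right\} >(\tau -\varepsilon )\left( \exp \left\{ \beta \left( \log \gamma \left( R_{j}\right) \right) \right\} \right) ^{\sigma _{0}}\text{,}
\end{equation*}
and replacing $\beta(\log\gamma(R_j))$ by the slightly larger $(1+o(1))^{-1}\beta(\log\gamma(r))$ absorbs the slow-variation factor into the constant, so that for $j$ large enough the right-hand side is at least $\omega\bigl(\exp\{\beta(\log\gamma(r))\}\bigr)^{\sigma_0}$, using $\omega<\tau-\varepsilon$.

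Finally I would set $I=\bigcup_{j\geq j_0}\bigl[R_j,(1+\tfrac1j)R_j\bigr]$ for a suitable large index $j_0$ (large enough that all of the above estimates hold), and verify that $m_l(I)=\sum_{j\geq j_0}\log\bigl(1+\tfrac1j\bigr)=+\infty$, exactly as in Lemma \ref{l6}; the intervals are disjoint because $(1+\tfrac1j)R_j<R_{j+1}$. I expect the only delicate point to be the bookkeeping in the second paragraph: one must be careful that the $o(1)$ coming from $\beta\in L_2$ and the interplay between $\sigma_0$ in the exponent and the additive $\log 2$ inside $\beta$ do not destroy the strict inequality $\omega<\tau-\varepsilon$. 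This is handled by first choosing $\varepsilon$ with $\tau-\varepsilon>\omega$, and then choosing $j_0$ large enough that $(1+o(1))^{\sigma_0}$-type corrections are smaller than $(\tau-\varepsilon)/\omega$; since $\sigma_0$ is a fixed finite positive number, this is harmless. The rest is routine and parallels Lemma \ref{l6} verbatim.
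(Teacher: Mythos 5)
Your construction $I=\bigcup_{j}\bigl[R_j,(1+\tfrac1j)R_j\bigr]$ is exactly the "similar reasoning to Lemma \ref{l6}" that the paper has in mind (the paper gives no explicit proof of Lemma \ref{l7}), and everything up to and including the lower bound at the endpoints $R_j$ and the logarithmic-measure computation is fine. The gap is precisely in the step you flag as delicate, and your proposed fix does not work. What you must control is the ratio of the weights
\[
\Bigl(\exp\bigl\{\beta(\log\gamma(r))\bigr\}\Bigr)^{\sigma_0}\Big/\Bigl(\exp\bigl\{\beta(\log\gamma(R_j))\bigr\}\Bigr)^{\sigma_0}
=\exp\bigl\{\sigma_0\bigl(\beta(\log\gamma(r))-\beta(\log\gamma(R_j))\bigr)\bigr\}
\]
for $r\in\bigl[R_j,(1+\tfrac1j)R_j\bigr]$, and you need it bounded by the fixed constant $(\tau-\varepsilon)/\omega$. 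The hypotheses $\gamma\in L_3$ and $\beta\in L_2$ only give $\beta(\log\gamma(r))-\beta(\log\gamma(R_j))=o(1)\cdot\beta(\log\gamma(R_j))$ with $o(1)\to 0$ as $j\to\infty$; since $\beta(\log\gamma(R_j))\to+\infty$, this \emph{difference} need not tend to $0$, nor even stay bounded. Hence the correction factor is $\exp\{\sigma_0\cdot o(\beta(\log\gamma(R_j)))\}$, not the "$(1+o(1))^{\sigma_0}$-type correction" you assert, and taking $j_0$ large does not make it small. Concretely, $\alpha=\log$, $\beta(x)=x^2$, $\gamma(r)=r$ satisfy all the standing conditions, yet $\beta(\log\gamma((1+\tfrac1j)R_j))-\beta(\log\gamma(R_j))\approx 2\log(1+\tfrac1j)\log R_j$, which is unbounded whenever $\log R_j$ grows faster than $j$ -- and the sequence $R_j$ is handed to you by the $\limsup$, so you cannot assume it grows slowly.

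The reason the same manoeuvre is legitimate in Lemma \ref{l6} is structural: there the order $\mu$ multiplies $\beta(\log\gamma(r))$ \emph{linearly}, so it suffices that the ratio $\beta(\log\gamma((1+\tfrac1j)R_j))/\beta(\log\gamma(R_j))$ tend to $1$, which is what $L_2$-slow variation provides. For the type, $\sigma_0$ sits in the exponent of $\exp\{\beta(\log\gamma(r))\}$, so one needs smallness of the \emph{difference} of the $\beta$-values -- a strictly stronger property that does not follow from $\beta\in L_2$ and $\gamma\in L_3$. To close the gap you would need an extra hypothesis (for instance $\beta(\log\gamma(2r))-\beta(\log\gamma(r))=O(1)$, together with shrinking the intervals to $[R_j,(1+\delta)R_j]$ for a fixed small $\delta$ chosen so that the resulting bounded correction is below $(\tau-\varepsilon)/\omega$, as in the classical case $\beta=\gamma=\mathrm{id}$ where the correction is $(1+\delta)^{\sigma_0}$), or a genuinely different argument. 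As written, the inequality "the right-hand side is at least $\omega(\exp\{\beta(\log\gamma(r))\})^{\sigma_0}$" is unjustified.
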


\begin{lemma}
\label{l8}(\cite{20}) Let $f$ be a solution of $\left( \ref{1}\right) $, and
let $1\leq p<+\infty $. Then for all $0<r<R$, where $0<R<+\infty $,%
\begin{equation*}
m_{p}(r,f)^{p}\leq C\left( \underset{j=0}{\overset{k-1}{\sum }}\text{ }%
\underset{0}{\overset{2\pi }{\int }}\text{ }\underset{0}{\overset{r}{\int }}%
\left\vert A_{j}(se^{i\theta })\right\vert ^{\frac{p}{k-j}}dsd\theta
+1\right) \text{,}
\end{equation*}%
where $C>0$ is a constant which depends on $p$ and the initial value of $f$
in a point $z_{0}$, where $A_{j}\neq 0$ for some $j=0,...,k-1$, and where%
\begin{equation*}
m_{p}(r,f)^{p}\leq \frac{1}{2\pi }\underset{0}{\overset{2\pi }{\int }}%
(\left\vert \log ^{+}\left\vert f(re^{i\theta })\right\vert \right\vert
)^{p}d\theta \text{.}
\end{equation*}
\end{lemma}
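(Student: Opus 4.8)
The plan is to pass from the $L^{p}$ estimate on circles to a pointwise estimate of $\log^{+}|f|$ along rays and then apply Hölder's inequality. First I would dispose of the degenerate case $A_{0}\equiv\cdots\equiv A_{k-1}\equiv 0$: then $f$ is a polynomial of degree $<k$, so $\log^{+}|f(re^{i\theta})|=O(\log r)$ and the asserted inequality holds trivially with its right-hand side reduced to the constant $C$. So assume some $A_{j}\not\equiv 0$, fix a base point $z_{0}$ (one may take $z_{0}=0$; the solution $f$ is determined by the data $f(z_{0}),\dots,f^{(k-1)}(z_{0})$), and reduce the lemma to the pointwise bound
\begin{equation*}
\log^{+}\bigl|f(re^{i\theta})\bigr|\ \le\ C_{0}\Bigl(1+\sum_{j=0}^{k-1}\int_{0}^{r}\bigl|A_{j}(se^{i\theta})\bigr|^{\frac{1}{k-j}}\,ds\Bigr),
\end{equation*}
valid for every $\theta$ and every $0<r<R$, with $C_{0}$ depending only on $k$, on $R$, and on $|f^{(i)}(z_{0})|$, $i=0,\dots,k-1$.

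The content is in this pointwise estimate, which is the classical comparison theorem of Herold. To obtain it I would fix $\theta$, put $g(s)=f(se^{i\theta})$, rewrite $(\ref{1})$ as the companion first-order system $V'(s)=B(s)V(s)$, $V=(g,g',\dots,g^{(k-1)})^{T}$, where $B(s)$ carries $1$'s on the subdiagonal and the entries $-e^{i(k-j)\theta}A_{j}(se^{i\theta})$ in its last row, and then run a Gronwall argument against a norm whose components $g^{(i)}$ are given scale-dependent weights. The crude choice of norm only yields $|V|'\le(1+\sum_{j}|A_{j}|)|V|$ and hence the exponent $1$; the weighting is what converts the factor $\sum_{j}|A_{j}|$ into $\sum_{j}|A_{j}|^{1/(k-j)}$, reflecting that $A_{j}$ multiplies the $j$-th derivative and must be ``integrated down'' $k-j$ times. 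Exponentiating the outcome $|V(r)|\le|V(0)|\exp\bigl(C\int_{0}^{r}(1+\sum_{j}|A_{j}(se^{i\theta})|^{1/(k-j)})\,ds\bigr)$ and taking logarithms gives the displayed estimate; in the write-up I would simply quote this from \cite{20} (alternatively, one can bypass the pointwise step and run the same weighted Gronwall iteration directly on the $L^{p}(d\theta)$-norms $s\mapsto\|g^{(i)}(se^{i\cdot})\|_{p}$, which produces the double-integral form immediately).

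Granting the pointwise bound, the rest is elementary. Raising it to the $p$-th power and using $(a_{0}+\cdots+a_{k})^{p}\le(k+1)^{p-1}(a_{0}^{p}+\cdots+a_{k}^{p})$ gives
\begin{equation*}
\bigl(\log^{+}|f(re^{i\theta})|\bigr)^{p}\le C_{1}\Bigl(1+\sum_{j=0}^{k-1}\Bigl(\int_{0}^{r}|A_{j}(se^{i\theta})|^{\frac{1}{k-j}}\,ds\Bigr)^{p}\Bigr),
\end{equation*}
and Hölder's inequality on $[0,r]$ with exponents $p$ and $p/(p-1)$ yields $\bigl(\int_{0}^{r}|A_{j}(se^{i\theta})|^{1/(k-j)}\,ds\bigr)^{p}\le r^{p-1}\int_{0}^{r}|A_{j}(se^{i\theta})|^{p/(k-j)}\,ds$. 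Integrating in $\theta$ over $[0,2\pi]$, using Fubini, and dividing by $2\pi$,
\begin{equation*}
m_{p}(r,f)^{p}\le\frac{1}{2\pi}\int_{0}^{2\pi}\bigl(\log^{+}|f(re^{i\theta})|\bigr)^{p}\,d\theta\le C_{1}\Bigl(1+\frac{r^{p-1}}{2\pi}\sum_{j=0}^{k-1}\int_{0}^{2\pi}\!\int_{0}^{r}|A_{j}(se^{i\theta})|^{\frac{p}{k-j}}\,ds\,d\theta\Bigr),
\end{equation*}
and since $0<r<R<+\infty$ the factor $r^{p-1}$ is bounded by $\max(1,R^{p-1})$; absorbing $C_{1}$, this factor, and the constant term into a single $C=C(p,k,R,f(z_{0}),\dots,f^{(k-1)}(z_{0}))$ gives the claim.

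The only real obstacle is the sharp exponent $\tfrac{1}{k-j}$ in the pointwise estimate: a naive Gronwall on the companion system, or a naive $k$-fold integration of $(\ref{1})$, only produces $\sum_{j}|A_{j}|$ and hence, after Hölder, $\int|A_{j}|^{p}$ in place of $\int|A_{j}|^{p/(k-j)}$, which for $j<k-1$ is genuinely weaker. Recovering the correct exponent requires Herold's weighted comparison argument, which is the reason the lemma is quoted from \cite{20} rather than reproved here; everything downstream of it --- the convexity inequality, Hölder, and the interchange of integrations --- is routine.
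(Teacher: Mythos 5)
The paper offers no proof of this lemma at all --- it is quoted verbatim from \cite{20} --- and your sketch is exactly the argument used there: Herold's comparison theorem along rays gives the pointwise bound with the exponents $\frac{1}{k-j}$ (this is indeed the only non-routine step, and you quote it from \cite{20} just as the paper does), after which the convexity inequality, H\"older on $[0,r]$, and Fubini are elementary, so your route coincides with the paper's. One observation worth keeping: your H\"older step produces the factor $r^{p-1}$, so for $p>1$ the constant $C$ must also depend on $R$ (the example $f''+f=0$, $f=\cos z$, where $m_{p}(r,f)^{p}\asymp r^{p}$ while the right-hand side is $O(r)$, shows this dependence cannot be removed); your final constant correctly records this, whereas the lemma's statement omits it --- harmlessly, since the paper only ever invokes the lemma with $p=1$.
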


\qquad The following logarithmic derivative estimation was found in \cite{gs}
from Gundersen.

\begin{lemma}
\label{l9}(\cite{gs}) Let $f$ be a transcendental meromorphic function, and
let $\xi >1$ be a given constant. Then there exist a set $E\subset \left(
1.+\infty \right) $ with finite logarithmic measure and a constant $B>0$
that depends only on $\xi $, and $i$, $j$, $0\leq i<j\leq k-1$, such that
for all $z$ satisfying $\left\vert z\right\vert =r\notin \lbrack 0,1]\cup E$,%
\begin{equation*}
\left\vert \frac{f^{(j)}(z)}{f^{\left( i\right) }(z)}\right\vert \leq
B\left\{ \frac{T(\xi r,f)}{r}(\log ^{\xi }r)\log T(\xi r,f)\right\} ^{j-i}%
\text{.}
\end{equation*}
\end{lemma}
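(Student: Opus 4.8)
The plan is to recall that this is Gundersen's classical logarithmic‑derivative estimate \cite{gs}, and that the natural route to it is the Poisson–Jensen formula. First I would reduce to the case $i=0$. Writing $g=f^{(i)}$ we have $f^{(j)}/f^{(i)}=g^{(j-i)}/g$, and the standard bounds $N(r,f^{(i)})\le (i+1)N(r,f)$ together with $m(r,f^{(i)})\le m(r,f)+m(r,f^{(i)}/f)$ and the lemma on the logarithmic derivative give $T(r,g)\le (i+1)T(r,f)+O(\log(rT(r,f)))$ outside a set of finite logarithmic measure. Hence, up to a constant and a negligible error, $T(\xi r,g)$ and $\log T(\xi r,g)$ are dominated by $T(\xi r,f)$ and $\log T(\xi r,f)$ for large $r$, and it suffices to prove the estimate for $|g^{(n)}(z)/g(z)|$ with $n=j-i$; the constant will then depend on $\xi$, $i$ and $j$ as claimed, and the exceptional set is a finite union of sets of finite logarithmic measure.

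For the case $i=0$, fix $R$ with $r<R<\xi r$ (say $R=\tfrac{1+\xi}{2}r$) and apply the Poisson–Jensen formula on $|w|=R$. Differentiating it once represents $f'/f$ as the sum of a boundary integral $\tfrac{1}{2\pi}\int_0^{2\pi}\log|f(Re^{i\theta})|\,\tfrac{2Re^{i\theta}}{(Re^{i\theta}-z)^{2}}\,d\theta$ and finite sums $\sum_{|a_\mu|<R}\big(\tfrac{1}{z-a_\mu}+\tfrac{\bar a_\mu}{R^{2}-\bar a_\mu z}\big)$ over the zeros $a_\mu$ minus the analogous sum over the poles $b_\nu$. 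For $f^{(n)}/f$ one either differentiates the Poisson–Jensen formula $n$ times or iterates the first‑order estimate through $f^{(n)}/f=\prod_{m=0}^{n-1}f^{(m+1)}/f^{(m)}$, applying the $i=0$ bound to each $g_m=f^{(m)}$ and replacing the exceptional set by the finite union. The boundary integral is controlled by $|\log|f(Re^{i\theta})||\le\log^{+}|f(Re^{i\theta})|+\log^{+}(1/|f(Re^{i\theta})|)$, and Jensen's formula converts $\int\log^{+}(1/|f|)$ into $T(R,f)+O(1)$ via $T(R,1/f)=T(R,f)+O(1)$; since $|Re^{i\theta}-z|\ge R-r\asymp r$, this term contributes $O(T(\xi r,f)/r)$.

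The sums over zeros and poles are the delicate part. They are bounded by $\big(n(R,f)+n(R,1/f)\big)\cdot\max_\mu\big(|z-a_\mu|^{-1}+\dots\big)$, and $n(R,f)+n(R,1/f)=O(T(\xi r,f))$ follows from the usual $N(\xi r,\cdot)\ge n(R,\cdot)\log(\xi r/R)$ argument. To bound $|z-a_\mu|$ from below one surrounds each zero and each pole by a disc of small radius (e.g.\ $|a_\mu|^{-2}$); the union of these discs, projected to the $r$‑axis, is a set $E\subset(1,+\infty)$ of finite logarithmic measure, and for $|z|=r\notin[0,1]\cup E$ one gets a lower bound like $|z-a_\mu|\gtrsim r^{-2}$. \textbf{This is where the precise shape of the bound is won:} the careful bookkeeping of how many zeros and poles lie at each scale, combined with the choice of disc radii, is exactly what produces the extra factor $(\log^{\xi}r)(\log T(\xi r,f))$ rather than a cruder power of $r$. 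The main obstacle, then, is not any single estimate but the simultaneous optimization of the exceptional discs — small enough total logarithmic measure, large enough lower bound on $|z-a_\mu|$ — together with the passage from the first derivative to the $n$‑th derivative while keeping the exceptional set of finite logarithmic measure; since the statement here is quoted verbatim from Gundersen, for the full details I would simply refer to \cite{gs}.
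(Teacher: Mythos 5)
The paper offers no proof of this lemma: it is stated as a quotation of Gundersen's logarithmic-derivative estimate from \cite{gs}, so there is nothing internal to compare your argument against, and your decision to sketch the standard route and then defer to \cite{gs} is consistent with how the paper treats it. Your outline does follow Gundersen's actual strategy (reduction to $g=f^{(i)}$ via $T(r,f^{(i)})\le (i+1)T(r,f)+O(\log (rT(r,f)))$, the differentiated Poisson--Jensen representation of $f'/f$, iteration to reach $f^{(n)}/f$, and exceptional discs around the zeros and poles). One caveat on the only step you describe concretely: discs of radius $|a_\mu|^{-2}$ do not in general produce an exceptional set of finite logarithmic measure, since $\sum_\mu |a_\mu|^{-3}$ can diverge when the zeros or poles accumulate fast enough, and a uniform lower bound $|z-a_\mu|\gtrsim r^{-2}$ would only give $\sum_\mu |z-a_\mu|^{-1}=O\big(r^{2}T(\xi r,f)\big)$, which is far weaker than the stated $\frac{T(\xi r,f)}{r}(\log^{\xi}r)\log T(\xi r,f)$. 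In \cite{gs} the disc radii shrink with the index of the point in an enumeration of the zeros and poles, and it is that choice which simultaneously keeps the logarithmic measure finite and yields the factors $\log^{\xi}r$ and $\log T(\xi r,f)$; since you explicitly flag this as the delicate step and cite \cite{gs} for it, this is a misleading heuristic in your sketch rather than a gap in what you actually claim, but as written that parenthetical would not carry the proof.
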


\begin{lemma}
\label{l10} Let $A_{0}(z),$ $A_{1}(z),...,A_{k-1}(z)$ be entire functions.
Then every nontrivial solution $f$ of $\left( \ref{1}\right) $ satisfies%
\begin{equation*}
\sigma _{(\alpha (\log ),\beta ,\gamma )}[f]\leq \max \{\sigma _{(\alpha
,\beta ,\gamma )}[A_{j}]:j=0,1,...,k-1\}\text{.}
\end{equation*}
\end{lemma}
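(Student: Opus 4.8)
The aim is to bound $\sigma_{(\alpha(\log),\beta,\gamma)}[f]$ from above by $\sigma := \max\{\sigma_{(\alpha,\beta,\gamma)}[A_j] : j=0,\dots,k-1\}$ for any nontrivial solution $f$ of $(\ref{1})$. If $\sigma = +\infty$ there is nothing to prove, so assume $\sigma < +\infty$. The natural route is to use $m_p$-estimates. Rewriting $(\ref{1})$ as
\begin{equation*}
f^{(k)}(z) = -A_{k-1}(z)f^{(k-1)}(z)-\cdots-A_1(z)f'(z)-A_0(z)f(z)
\end{equation*}
gives no direct handle; instead I would invoke Lemma \ref{l8} with $p=2$ (or any fixed $p\geq 1$), which yields, for a suitable constant $C>0$ and all $0<r<R$,
\begin{equation*}
m_2(r,f)^2 \leq C\Big(\sum_{j=0}^{k-1}\int_0^{2\pi}\int_0^r \big|A_j(se^{i\theta})\big|^{\frac{2}{k-j}}\,ds\,d\theta + 1\Big).
\end{equation*}

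\textbf{Key steps.} First, for each $j$, since $A_j$ is entire with $\sigma_{(\alpha,\beta,\gamma)}[A_j]\leq\sigma<+\infty$, the definition of $(\alpha,\beta,\gamma)$-order (in the $M(r,f)$ form, using the equivalence recorded after Definition \ref{d1}) gives, for any $\varepsilon>0$ and all large $s$,
\begin{equation*}
|A_j(se^{i\theta})| \leq M(s,A_j) \leq \exp^{[2]}\big\{\alpha^{-1}((\sigma+\varepsilon)\beta(\log\gamma(s)))\big\}.
\end{equation*}
Raising to the power $\tfrac{2}{k-j}\leq 2$ and integrating over $\theta\in[0,2\pi]$ and $s\in[0,r]$ only worsens the bound by a factor $r$ times a constant, so the right-hand side of the Lemma \ref{l8} inequality is at most
\begin{equation*}
C'\, r\,\exp^{[2]}\big\{\alpha^{-1}((\sigma+\varepsilon)\beta(\log\gamma(r)))\big\}^{2},
\end{equation*}
using monotonicity of $s\mapsto M(s,A_j)$ to replace $s$ by $r$. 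Next I relate $m_2(r,f)$ to $M(r,f)$: for an entire $f$ the standard inequality $\log^+ M(r,f)\leq C m_2(2r,f)$ (or a Cartan-type / mean-value argument) — which is precisely the kind of estimate used elsewhere in this literature — gives $\log M(r,f) \lesssim m_2(2r,f)$. Combining, $\log M(r,f)$ is bounded above by $\exp\{\alpha^{-1}((\sigma+2\varepsilon)\beta(\log\gamma(2r)))\}$ for large $r$ (absorbing the polynomial factor $r^{1/2}$ and constants into the $\varepsilon$-slack, since $\alpha(\log x)=o(\alpha(x))$). Then using $\gamma(2r)\leq 2\gamma(r)$, $\log(2\gamma(r))=\log 2+\log\gamma(r)$, and $\beta(x+O(1))=(1+o(1))\beta(x)$ — exactly as in the proof of Lemma \ref{l3} — I get
\begin{equation*}
\log^{[2]}M(r,f) \leq \alpha^{-1}\big((\sigma+3\varepsilon)\beta(\log\gamma(r))\big)
\end{equation*}
for all large $r$, hence $\alpha(\log^{[3]}M(r,f)) \leq (\sigma+3\varepsilon)\beta(\log\gamma(r))$. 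By Proposition \ref{p1}, the left side is what computes $\sigma_{(\alpha(\log),\beta,\gamma)}[f]$, so letting $\varepsilon\to 0$ gives $\sigma_{(\alpha(\log),\beta,\gamma)}[f]\leq\sigma$.

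\textbf{Main obstacle.} The delicate point is the passage from the $m_p$-mean estimate of Lemma \ref{l8} to a pointwise/maximum-modulus bound on $f$: one must pick up only one extra exponential, not two, so that $\log^{[2]}M(r,f)$ — rather than $\log M(r,f)$ — is controlled by $\alpha^{-1}(\cdots\beta(\log\gamma(r)))$, matching the $(\alpha(\log),\cdot,\cdot)$ scale on the left and the $(\alpha,\cdot,\cdot)$ scale on the right. This is what forces the "$\alpha(\log)$" twist in the conclusion and is exactly why the hypothesis $\alpha(\log x)=o(\alpha(x))$ (together with $\alpha\in L_1$, $\beta\in L_2$, $\gamma\in L_3$) is needed to swallow the $O(\log r)$ and constant factors. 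A careful but routine bookkeeping of the $\varepsilon$'s, together with an application of Lemma \ref{l2} to remove any exceptional set arising in the $m_2(2r,f)$ step, completes the argument.
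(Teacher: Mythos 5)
Your approach is essentially the paper's: invoke Lemma \ref{l8}, bound each $|A_j(se^{i\theta})|$ by $\exp^{[2]}\{\alpha^{-1}((\sigma+\varepsilon)\beta(\log\gamma(s)))\}$, integrate, and read off the order via Proposition \ref{p1}. The only real difference is that the paper takes $p=1$, for which $m_1(r,f)=m(r,f)=T(r,f)$ for entire $f$, so your ``main obstacle'' (passing from $m_2$ to $M(r,f)$) never arises; your detour through $p=2$ works but is unnecessary. One bookkeeping slip to fix: the bound you obtain on $\log M(r,f)$ is $\exp^{[2]}\{\alpha^{-1}(\cdots)\}$, not $\exp\{\alpha^{-1}(\cdots)\}$, so the correct intermediate display is $\log^{[3]}M(r,f)\leq\alpha^{-1}((\sigma+3\varepsilon)\beta(\log\gamma(r)))$ rather than $\log^{[2]}M(r,f)\leq\alpha^{-1}(\cdots)$ (the latter would wrongly assert $\sigma_{(\alpha,\beta,\gamma)}[f]\leq\sigma$); your final inequality $\alpha(\log^{[3]}M(r,f))\leq(\sigma+3\varepsilon)\beta(\log\gamma(r))$ is nevertheless the right one.
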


\begin{proof}
Set%
\begin{equation*}
\varpi =\max \{\sigma _{(\alpha ,\beta ,\gamma )}[A_{j}]:j=0,1,...,k-1\}%
\text{.}
\end{equation*}%
By the definition of $\sigma _{(\alpha ,\beta ,\gamma )}[A_{j}]$, for any
given $\varepsilon >0$ and for sufficiently large $r$,%
\begin{equation}
M(r,A_{j})\leq \exp ^{[2]}\left\{ \alpha ^{-1}((\varpi +\varepsilon )\beta
\left( \log \gamma \left( r\right) \right) )\right\} ,\text{ }j=0,1,...,k-1%
\text{.}  \label{j}
\end{equation}%
By Lemma \ref{l8} for $p=1$ and (\ref{j}), we have%
\begin{equation*}
T(r,f)=m(r,f)\leq 2\pi C\left( 1+\underset{j=0}{\overset{k-1}{\sum }}%
rM(r,A_{j})\right)
\end{equation*}%
\begin{equation*}
\leq 2\pi C\left( 1+kr\exp ^{[2]}\left\{ \alpha ^{-1}((\varpi +\varepsilon
)\beta \left( \log \gamma \left( r\right) \right) )\right\} \right)
\end{equation*}%
\begin{equation}
\leq \exp ^{[2]}\left\{ \alpha ^{-1}((\varpi +2\varepsilon )\beta \left(
\log \gamma \left( r\right) \right) )\right\} \text{.}  \label{k}
\end{equation}%
Therefore, we get from (\ref{k}) and Proposition \ref{p1} that%
\begin{equation*}
\sigma _{(\alpha (\log ),\beta ,\gamma )}[f]\leq \max \{\sigma _{(\alpha
,\beta ,\gamma )}[A_{j}]:j=0,1,...,k-1\}\text{.}
\end{equation*}
\end{proof}

\section{\textbf{Proof of the Main Results}}

\textbf{Proof of Theorem \ref{t1}}. Set $\Upsilon =\sup \{\sigma _{(\alpha
(\log ),\beta ,\gamma )}[f]\mid L(f)=0\}$ and $\digamma =\sup \{\sigma
_{(\alpha ,\beta ,\gamma )}[A_{j}]\mid j=0,...,k-1\}$.

\qquad First we prove that $\digamma \leq \Upsilon $. If $\Upsilon =+\infty $%
, it is trivial. Hence we just consider the case of $\Upsilon <+\infty $.
Let $f_{1}(z),f_{2}(z),...,f_{k}(z)$ be a solution base of (\ref{1}) with $%
\sigma _{(\alpha (\log ),\beta ,\gamma )}[f_{j}]<+\infty $, $j=1,...,k$. It
is clear that $W=W(f_{1},f_{2},...,f_{k})\neq 0$ by the properties of the
Wronsky determinant.

\qquad It follows from Proposition \ref{p2}, Proposition \ref{p4} and Lemma %
\ref{l3} that $\sigma _{(\alpha (\log ),\beta ,\gamma )}[W]<+\infty $. By
properties of the Wronsky determinant (\cite[p. 55]{19}),%
\begin{equation*}
A_{k-s}(z)=-W_{k-s}((f_{1},f_{2},...,f_{k})\cdot W^{-1},\text{ }s\in
\{1,...,k\}\text{,}
\end{equation*}%
where%
\begin{equation*}
W_{j}(f_{1},f_{2},...,f_{k})=\left\vert 
\begin{array}{ccc}
f_{1} & \cdot \cdot \cdot & f_{k} \\ 
\begin{array}{c}
\cdot \\ 
\cdot \\ 
\cdot%
\end{array}
& 
\begin{array}{c}
\cdot \\ 
\cdot \\ 
\cdot%
\end{array}
& 
\begin{array}{c}
\cdot \\ 
\cdot \\ 
\cdot%
\end{array}
\\ 
f_{1}^{(j-1)} & \cdot \cdot \cdot & f_{k}^{(j-1)} \\ 
f_{1}^{(j)} & \cdot \cdot \cdot & f_{k}^{(j)} \\ 
f_{1}^{(j+1)} & \cdot \cdot \cdot & f_{k}^{(j+1)} \\ 
\begin{array}{c}
\cdot \\ 
\cdot \\ 
\cdot%
\end{array}
& 
\begin{array}{c}
\cdot \\ 
\cdot \\ 
\cdot%
\end{array}
& 
\begin{array}{c}
\cdot \\ 
\cdot \\ 
\cdot%
\end{array}
\\ 
f_{1}^{(k-1)} & \cdot \cdot \cdot & f_{k}^{(k-1)}%
\end{array}%
\right\vert \text{.}
\end{equation*}%
In view of Proposition \ref{p2} we can conclude that $\sigma _{(\alpha (\log
),\beta ,\gamma )}[A_{i}]<+\infty $, $i=0,1,...,k-1$. By Lemma \ref{l1} to $%
f_{i}$, $i=1,...,k$%
\begin{equation*}
m\left( r,\frac{f_{i}^{(l)}}{f_{i}}\right) =O(\exp (\alpha ^{-1}((\Upsilon
+\varepsilon )\beta \left( \log \gamma \left( r\right) \right) )))\text{, }%
l=1,2,...,k,\text{ }r\notin F\text{.}
\end{equation*}%
We now apply the standard order reduction procedure (\cite[p. 53-57]{19}).
Let us denote%
\begin{equation*}
\nu _{1}(z):=\frac{d}{dz}\left( \frac{f(z)}{f_{1}(z)}\right) \text{,}
\end{equation*}%
$A_{k}=1$ and $\nu _{1}^{(-1)}:=\frac{f}{f_{1}}$, i.e., $(\nu
_{1}^{(-1)})^{^{\prime }}:=\nu _{1}$. Hence,%
\begin{equation}
f^{(l)}=\underset{m=0}{\overset{l}{\sum }}\binom{l}{m}f_{1}^{(m)}\nu
_{1}^{(l-1-m)}\text{, }l=0,...,k\text{.}  \label{2}
\end{equation}%
Substituting (\ref{2}) into (\ref{1}) and using the fact that $f_{1}$ solves
(\ref{1}), we obtain 
\begin{equation}
\nu _{1}^{(k-1)}+A_{1,k-2}(z)\nu _{1}^{(k-2)}+\cdots +A_{1,0}(z)\nu _{1}=0,
\label{3}
\end{equation}%
where%
\begin{equation*}
A_{1,j}=A_{j+1}+\underset{m=1}{\overset{k-j-1}{\sum }}\binom{j+1+m}{m}%
A_{j+1+m}\frac{f_{1}^{(m)}}{f_{1}},\text{ }j=0,...,k-2\text{.}
\end{equation*}%
By $\Upsilon <+\infty $ and Lemma \ref{l3}, the meromorphic functions%
\begin{equation}
\nu _{1,j}(z)=\frac{d}{dz}\left( \frac{f_{j+1}(z)}{f_{1}(z)}\right) ,\text{ }%
j=1,...,k-1,  \label{4}
\end{equation}%
form a solution base to (\ref{3}) of finite $(\alpha (\log ),\beta ,\gamma )$%
-order.

\qquad Next, we claim that%
\begin{equation}
m(r,A_{i})=O\left( \exp \left\{ \alpha ^{-1}((\Upsilon +\varepsilon )\beta
\left( \log \gamma \left( r\right) \right) )\right\} \right) ,\text{ }%
r\notin F,\text{ }i=0,...,k-1\text{,}  \label{5}
\end{equation}%
when%
\begin{equation}
m(r,A_{1,j})=O\left( \exp \left\{ \alpha ^{-1}((\Upsilon +\varepsilon )\beta
\left( \log \gamma \left( r\right) \right) )\right\} \right) ,\text{ }%
r\notin F,\text{ }j=0,...,k-2\text{.}  \label{6}
\end{equation}%
In fact, we prove it by induction on $i$ following \cite[p. 10]{19}. By (\ref%
{3}), for $j=k-2$, we have $A_{1,k-2}=A_{k-1}+k\frac{f^{\prime }}{f}$. By
Lemma \ref{l1} and (\ref{6}), we get that%
\begin{eqnarray*}
m(r,A_{k-1}) &\leq &m(r,A_{1,k-2})+m\left( r,\frac{f^{\prime }}{f}\right)
+O(1) \\
&=&O\left( \exp \left\{ \alpha ^{-1}((\Upsilon +\varepsilon )\beta \left(
\log \gamma \left( r\right) \right) )\right\} \right) \text{.}
\end{eqnarray*}%
We assume that%
\begin{equation}
m(r,A_{i})=O\left( \exp \left\{ \alpha ^{-1}((\Upsilon +\varepsilon )\beta
\left( \log \gamma \left( r\right) \right) )\right\} \right) ,\text{ }%
i=k-1,...,k-l\text{.}  \label{7}
\end{equation}%
Since%
\begin{equation*}
A_{1,k-(l+2)}=A_{k-(l+1)}+\underset{m=1}{\overset{l+1}{\sum }}\binom{m+k-l-1%
}{m}A_{m+k-l-1}\frac{f_{1}^{(m)}}{f_{1}}\text{,}
\end{equation*}%
by Lemma \ref{l1}, (\ref{6}) and (\ref{7}), we obtain that%
\begin{eqnarray}
m(r,A_{k-(l+1)}) &\leq &m(r,A_{1,k-(l+2)})+m(r,A_{k-1})+\cdots +m(r,A_{k-l})
\notag \\
&&+m\left( r,\frac{f_{1}^{\prime }}{f_{1}}\right) +\cdots +m\left( r,\frac{%
f_{1}^{(l+1)}}{f_{1}}\right) +O(1)  \label{8} \\
&=&O\left( \exp \left\{ \alpha ^{-1}((\Upsilon +\varepsilon )\beta \left(
\log \gamma \left( r\right) \right) )\right\} \right) \text{, }r\notin F%
\text{.}  \notag
\end{eqnarray}%
We may now proceed as above to further reduce the order of (\ref{3}). In
each reduction step, according to (\ref{4}), we obtain a solution base of
meromorphic functions of finite $(\alpha (\log ),\beta ,\gamma )$-order
according to (\ref{4}), and the implication (\ref{6}) to (\ref{5}) remains
valid. Hence, we finally obtain an equation of type $w^{\prime }+B(z)w=0$,
and $w$ is any solution of the equation with $\sigma _{(\alpha (\log ),\beta
,\gamma )}[w]<+\infty $. Then%
\begin{equation*}
m(r,B)=m\left( r,\frac{w^{\prime }}{w}\right) =O\left( \exp \left\{ \alpha
^{-1}((\Upsilon +\varepsilon )\beta \left( \log \gamma \left( r\right)
\right) )\right\} \right) \text{, }r\notin F\text{.}
\end{equation*}%
Observing the reasoning corresponding to (\ref{5}) and (\ref{6}) in the
subsequent reduction steps, we see that%
\begin{equation*}
m(r,A_{j})=O(\exp (\alpha ^{-1}((\Upsilon +\varepsilon )\beta \left( \log
\gamma \left( r\right) \right) ))),\text{ }j=0,...,k-1,\text{ }r\notin F%
\text{.}
\end{equation*}%
It implies that%
\begin{equation*}
T(r,A_{j})=O\left( \exp \left\{ \alpha ^{-1}((\Upsilon +\varepsilon )\beta
\left( \log \gamma \left( r\right) \right) )\right\} \right) ,\text{ }%
j=0,...,k-1,\text{ }r\notin F\text{.}
\end{equation*}%
By using Lemma \ref{l2} for $d=2$, $\gamma (2r)\leq 2\gamma (r)$ and $\beta
(x+O(1))=(1+o(1))\beta (x)$ as $x\rightarrow +\infty $, for sufficiently
large $r$, $j=0,...,k-1$, we obtain that%
\begin{equation*}
T(r,A_{j})=O\left( \exp \left\{ \alpha ^{-1}((\Upsilon +\varepsilon )\beta
(\log \gamma (2r))))\right\} \right)
\end{equation*}%
\begin{equation*}
\leq \exp \left\{ \alpha ^{-1}((\Upsilon +2\varepsilon )\beta \left( \log
2+\log \gamma \left( r\right) \right) ))\right\} =\exp \left\{ \alpha
^{-1}((\Upsilon +2\varepsilon )\left( 1+o\left( 1\right) \right) \beta
\left( \log \gamma \left( r\right) \right) ))\right\} .
\end{equation*}%
Hence $\frac{\alpha (\log T(r,A_{j}))}{\beta \left( \log \gamma \left(
r\right) \right) }\leq \Upsilon +2\varepsilon $. This implies that $\digamma
\leq \Upsilon $.

\qquad We next prove the converse inequality under the assumption that $%
\digamma \leq +\infty $.

\qquad By Lemma \ref{l4}, there exists a set $E\in 
\mathbb{R}
_{+}$ of finite logarithmic measure, such that for all $z$ satisfies $%
\left\vert f(z)\right\vert =M(r,f)$ and $\left\vert z\right\vert =r\notin E$,%
\begin{equation}
f^{(i)}(z)=\left( \frac{\nu (r,f)}{z}\right) ^{i}(1+o(1))f(z)\text{, }%
i=0,...,k\text{.}  \label{9}
\end{equation}%
Now by substituting (\ref{9}) into (\ref{1}), we get that%
\begin{equation*}
\nu (r,f)^{k}+zA_{k-1}(z)\nu (r,f)^{k-1}(1+o(1))~\ \ \ \ \ \ \ \ \ \ \ \ \ \
\ \ 
\end{equation*}%
\begin{equation*}
+\cdots +z^{k-1}A_{1}(z)\nu (r,f)(1+o(1))+z^{k}A_{0}(z)(1+o(1))=0\text{.}
\end{equation*}%
The definition of $(\alpha ,\beta ,\gamma )$-order, Definition \ref{d1} and
Proposition \ref{p1} yields that for any given $\varepsilon >0$ there exists 
$r_{0}>1$, such that for all $r\geq r_{0}$,%
\begin{equation*}
M(r,A_{j})<\exp ^{[2]}\left\{ \alpha ^{-1}((\digamma +\varepsilon )\beta
\left( \log \gamma \left( r\right) \right) )\right\} \text{, }j=0,...,k-1%
\text{.}
\end{equation*}%
By Lemma \ref{l5} we get that,%
\begin{eqnarray*}
\nu (r,f) &\leq &1+\underset{0\leq j\leq k-1}{\max }\left\vert
z^{k-j}A_{j}(z)(1+o(1))\right\vert  \\
&\leq &1+\underset{0\leq j\leq k-1}{\max }\left( 2r^{k-j}\exp ^{[2]}\left\{
\alpha ^{-1}((\digamma +\varepsilon )\beta \left( \log \gamma \left(
r\right) \right) )\right\} \right)  \\
&\leq &1+2r^{k}\exp ^{[2]}\left\{ \alpha ^{-1}((\digamma +\varepsilon )\beta
\left( \log \gamma \left( r\right) \right) )\right\}  \\
&\leq &\exp ^{[2]}\left\{ \alpha ^{-1}((\digamma +2\varepsilon )\beta \left(
\log \gamma \left( r\right) \right) )\right\} \text{, }r\notin E\text{.}
\end{eqnarray*}%
It follows from \cite[p. 36-37]{gl} that%
\begin{eqnarray*}
T(r,f) &\leq &\log M(r,f)\leq \log \mu (r,f)+\log (\nu (2r,f)+2) \\
&\leq &\nu (r,f)\log r+\log (2\nu (2r,f)) \\
&\leq &\exp ^{[2]}\left\{ \alpha ^{-1}((\digamma +2\varepsilon )\beta \left(
\log \gamma \left( r\right) \right) )\right\} \log r+\log (2\exp
^{[2]}\left\{ \alpha ^{-1}((\digamma +2\varepsilon )\beta (\log \gamma
(2r)))\right\} ) \\
&\leq &\exp ^{[2]}\left\{ \alpha ^{-1}((\digamma +3\varepsilon )\beta \left(
\log \gamma \left( r\right) \right) )\right\} +\log 2+\exp \left\{ \alpha
^{-1}((\digamma +2\varepsilon )\beta (\log \gamma (2r)))\right\}  \\
&\leq &\exp ^{[2]}\left\{ \alpha ^{-1}((\digamma +4\varepsilon )\beta \left(
\log \gamma \left( r\right) \right) )\right\} \text{.}
\end{eqnarray*}%
This implies that $\Upsilon \leq \digamma $. Thus Theorem \ref{t1} follows.%
\newline

\textbf{Proof of Theorem \ref{t2}}. By the assumption there exist two
numbers $\lambda _{1}$ and $\lambda $ such that $\sigma _{(\alpha ,\beta
,\gamma )}[A_{m}]\geq \lambda $ and $\sigma _{(\alpha ,\beta ,\gamma
)}[A_{l}]\leq \lambda _{1}<\lambda $ for $l=m+1,...,k-1$.

\qquad Let $f_{1},...,f_{m+1}$ be linearly independent solutions of (\ref{1}%
) such that $\sigma _{(\alpha (\log ),\beta ,\gamma )}[f_{i}]<\lambda $, $%
i=1,...,m+1$. If $m=k-1$, then all $f_{1},...,f_{k}$ are of $\sigma
_{(\alpha (\log ),\beta ,\gamma )}[f_{i}]<\lambda $, this contradict with
the Theorem \ref{t1}. Hence, $m<k-1$. Applying the order reduction procedure
as in the proof of Theorem \ref{t1}, and using the notation $\nu _{0}$
instead of $f$ with $A_{0,0},...,A_{0,k-1}$ instead of $A_{0},...,A_{k-1}$,
and on the basis of general reduction step, we obtain an equation of type%
\begin{equation}
\nu _{j}^{(k-j)}+A_{j,k-j-1}(z)\nu _{j}^{(k-j-1)}+\cdots +A_{j,0}(z)\nu
_{j}=0\text{, }j=1,...,k-1\text{,}  \label{10}
\end{equation}%
where%
\begin{equation}
A_{j,l}=A_{j-1,l+1}+\underset{n=1}{\overset{k-l-j}{\sum }}\binom{l+1+n}{n}%
A_{j-1,l+1+n}\frac{\nu _{j-1,1}^{(n)}}{\nu _{j-1,1}}\text{,}  \label{11}
\end{equation}%
and the functions%
\begin{equation}
\nu _{j,l}(z)=\frac{d}{dz}\left( \frac{\nu _{j-1,l+1}(z)}{\nu _{j-1,1}(z)}%
\right) \text{, }l=1,...,k-j\text{, }\nu _{0}(z)=f(z)\text{, }\nu _{j}(z)=%
\frac{d}{dz}\left( \frac{\nu _{j-1}(z)}{\nu _{0,j-1}(z)}\right) \text{,} 
\notag
\end{equation}%
determine at each reduction step a solution base of (\ref{10}) in terms of
the preceding solution base. We may express (\ref{1}) and the $m-$th
reduction steps by the following table. The rows correspond to (\ref{10})
for $\nu _{0}(z),...,\nu _{m}(z)$, i.e., the first row corresponds to (\ref%
{1}), and columns from $k$ to $0$ give the coefficients of these equations,
while the last column lists those solutions with $\sigma _{(\alpha (\log
),\beta ,\gamma )}[f]<\lambda $.

\begin{center}
$\fbox{$%
\begin{array}[t]{ccccccccc}
~~\hspace{0.25in}~~ & ~~\ ~\ k & k-1~ & \cdot ~\ \  & \mathbf{m~\ \ \ \ \ \ }
& m-1~\ \  & \cdot & ~\ \ 0~\ \  & \sigma _{(\alpha (\log ),\beta ,\gamma
)}[f]<\lambda%
\end{array}%
~$}$

\frame{$%
\begin{array}[t]{ccccccccc}
v_{0} & 1 & A_{0,k-1} & \cdot & \mathbf{A}_{0,m} & A_{0,m-1} & \cdot & 
A_{0,0} & v_{0,1},...,v_{0,m+1} \\ 
v_{1} &  & 1 & \cdot & A_{1,m} & \mathbf{A}_{1,m-1} & \cdot & A_{1,0} & 
v_{1,1},...,v_{1,m} \\ 
\cdot &  &  &  & \cdot & \cdot & \cdot & \cdot & \cdot \\ 
\cdot &  &  &  & \cdot & \cdot & \cdot & \cdot & \cdot \\ 
\cdot &  &  &  & \cdot & \cdot & \cdot & \cdot & \cdot \\ 
v_{m-1} &  &  &  & A_{m-1,m} & A_{m-1,m-1} & \cdot & A_{m-1,0} & 
v_{m-1,1},v_{m-1,2} \\ 
v_{m} &  &  &  & A_{m,m} & A_{m,m-1} & \cdot & \mathbf{A}_{m,0} & v_{m,1}%
\end{array}%
$}
\end{center}

By Lemma \ref{l1} and (\ref{11}), we see that in the second row,
corresponding to the first reduction step,%
\begin{equation*}
m(r,A_{1,l})=O\left( \exp \left\{ \alpha ^{-1}((\lambda _{1}+\varepsilon
)\beta \left( \log \gamma \left( r\right) \right) )\right\} \right) ,\text{ }%
l=m,...,k-2,\text{ }r\notin F\text{,}
\end{equation*}%
while $\lambda _{1}+\varepsilon <\lambda $ and%
\begin{equation*}
m(r,A_{1,m-1})\neq O\left( \exp \left\{ \alpha ^{-1}((\lambda
_{1}+\varepsilon )\beta \left( \log \gamma \left( r\right) \right) )\right\}
\right) \text{, }r\notin F\text{.}
\end{equation*}%
Similarly in each reduction step of (\ref{11}) implies that%
\begin{equation}
m(r,A_{j,l})=O\left( \exp \left\{ \alpha ^{-1}((\lambda _{1}+\varepsilon
)\beta \left( \log \gamma \left( r\right) \right) )\right\} \right) \text{, }%
r\notin F\text{,}  \label{12}
\end{equation}%
when $l=m+1-j,...,k-(j+1)$, i.e., for all coefficients to the left from the
bold face coefficient $A_{j,m-j}$, while for $j=1,...,m$,%
\begin{equation*}
m(r,A_{j,m-1})\neq O\left( \exp \left\{ \alpha ^{-1}((\lambda
_{1}+\varepsilon )\beta \left( \log \gamma \left( r\right) \right) )\right\}
\right) \text{, }r\notin F\text{.}
\end{equation*}%
In particular, 
\begin{equation*}
m(r,A_{m,0})\neq O\left( \exp \left\{ \alpha ^{-1}((\lambda +\varepsilon
)\beta \left( \log \gamma \left( r\right) \right) )\right\} \right) \text{, }%
r\notin F\text{.}
\end{equation*}%
Applying Lemma \ref{l6} to the coefficient $A_{m,0}$ with the constant $%
\lambda $, and obtain that%
\begin{equation}
T(r,A_{m,0})>\exp \left\{ \alpha ^{-1}(\left( \lambda +\varepsilon \right)
\beta \left( \log \gamma \left( r\right) \right) )\right\} \text{, }%
r\rightarrow +\infty \text{, }r\in I\text{.}  \label{13}
\end{equation}%
On the other hand, after the $m-$th reduction step, by (\ref{11}), (\ref{12}%
) and Lemma \ref{l1}, we have%
\begin{equation}
A_{m,0}=-\frac{\nu _{m,1}^{(k-m)}}{\nu _{m,1}}-A_{m,k-m-1}\frac{\nu
_{m,1}^{(k-m-1)}}{\nu _{m,1}}-\cdots -A_{m,1}\frac{\nu _{m,1}^{\prime }}{\nu
_{m,1}}\text{.}  \notag
\end{equation}%
That implies that%
\begin{equation*}
m(r,A_{m,0})=O\left( \exp \left\{ \alpha ^{-1}((\lambda _{1}+\varepsilon
)\beta \left( \log \gamma \left( r\right) \right) )\right\} \right) \text{, }%
r\notin F\text{.}
\end{equation*}%
Since $\sigma _{(\alpha ,\beta ,\gamma )}[\nu _{m,1}]<\lambda _{1}$, in view
of Proposition \ref{p2}, Proposition \ref{p4} and Lemma \ref{l3}, we obtain
that%
\begin{equation*}
N(r,A_{m,0})=O\left( \exp \left\{ \alpha ^{-1}((\lambda _{1}+\varepsilon
)\beta \left( \log \gamma \left( r\right) \right) )\right\} \right) \text{, }%
r\notin F\text{.}
\end{equation*}%
Therefore, 
\begin{equation*}
T(r,A_{m,0})=O\left( \exp \left\{ \alpha ^{-1}((\lambda _{1}+\varepsilon
)\beta \left( \log \gamma \left( r\right) \right) )\right\} \right) \text{, }%
r\notin F\text{.}
\end{equation*}%
By using Lemma \ref{l2} for $d=2$, $\gamma (2r)\leq 2\gamma (r)$ and $\beta
(x+O(1))=(1+o(1))\beta (x)$ as $x\rightarrow +\infty $, for all sufficiently
large $r$, we get that 
\begin{equation*}
T(r,A_{m,0})=O\left( \exp \left\{ \alpha ^{-1}((\lambda _{1}+\varepsilon
)\beta (\log \gamma (2r)))\right\} \right)
\end{equation*}%
\begin{eqnarray}
T(r,A_{m,0}) &\leq &O\left( \exp \left\{ \alpha ^{-1}((\lambda
_{1}+\varepsilon )\left( 1+o\left( 1\right) \right) \beta (\log \gamma
(r)))\right\} \right)  \notag \\
&\leq &\exp \left\{ \alpha ^{-1}((\lambda _{1}+2\varepsilon )\beta \left(
\log \gamma \left( r\right) \right) )\right\} .  \label{14}
\end{eqnarray}%
Since $\varepsilon >0$ satisfies $\lambda _{1}+\varepsilon <\lambda ,$ by (%
\ref{13}) and (\ref{14}), we obtain a contradiction and this proves the
assertion. Hence, there exist at most $m$ linearly independent solutions of (%
\ref{1}) with $\sigma _{(\alpha (\log ),\beta ,\gamma )}[f]<\lambda $.%
\newline

\textbf{Proof of Theorem \ref{t3}}. Let $f$ be a nontrivial solution of (\ref%
{1}). We denote $\sigma _{(\alpha (\log ),\beta ,\gamma )}[f]=\sigma _{1}$
and $\sigma _{(\alpha ,\beta ,\gamma )}[A_{0}]=\sigma _{0}$. The inequality $%
\sigma _{0}\leq \sigma _{1}$ follows from Theorem \ref{t2} when $m=0$ and $%
\lambda =\sigma _{0}$.

\qquad To prove the conserve inequality, by Lemma \ref{l8} for $p=1$ and the
definition of $(\alpha ,\beta ,\gamma )$-order for any given $\varepsilon >0$
and sufficiently large $r$%
\begin{eqnarray*}
m(r,f) &\leq &C\left( \underset{j=0}{\overset{k-1}{\sum \text{ }}}\underset{0%
}{\overset{2\pi }{\int }}\text{ }\underset{0}{\overset{r}{\int }}\left\vert
A_{j}(se^{i\theta })\right\vert ^{\frac{1}{k-j}}dsd\theta +1\right) \\
&\leq &C\text{ }\left( k\underset{0\leq j\leq k-1}{\max }\text{ }\underset{0}%
{\overset{2\pi }{\int }}\text{ }\underset{0}{\overset{r}{\int }}\left\vert
A_{j}(se^{i\theta })\right\vert ^{\frac{1}{k-j}}dsd\theta +1\right) \\
&\leq &C_{1}\text{ }\underset{0\leq j\leq k-1}{\max }\text{ }\underset{0}{%
\overset{r}{\int }}\text{ }\left( \exp ^{[2]}\left\{ (\alpha ^{-1}((\sigma
_{0}+\varepsilon )\beta (\log \gamma (s)))\right\} \right) ^{\frac{1}{k-j}}ds
\\
&\leq &C_{1}\text{ }\underset{0}{\overset{r}{\int }}\exp ^{[2]}\left\{
\alpha ^{-1}((\sigma _{0}+\varepsilon )\beta (\log \gamma (s)))\right\} ds \\
&\leq &C_{1}\text{ }r\text{ }\exp ^{[2]}\left\{ \alpha ^{-1}((\sigma
_{0}+\varepsilon )\beta \left( \log \gamma \left( r\right) \right) )\right\}
\\
&\leq &\exp ^{[2]}\left\{ \alpha ^{-1}((\sigma _{0}+2\varepsilon )\beta
\left( \log \gamma \left( r\right) \right) )\right\} \text{,}
\end{eqnarray*}%
where $C_{1}>0$ is some constant. Therefore%
\begin{equation*}
\frac{\alpha (\log ^{[2]}T(r,f))}{\beta \left( \log \gamma \left( r\right)
\right) }\leq \sigma _{0}+2\varepsilon \text{.}
\end{equation*}%
It is implies that $\sigma _{1}\leq \sigma _{0}$, and then the proof is
complete.\newline

\textbf{Proof of Theorem \ref{t4}}. Suppose that $f$ is a nontrivial
solution of (\ref{1}). From (\ref{1}), we can write that%
\begin{equation}
\left\vert A_{0}(z)\right\vert \leq \left\vert \frac{f^{(k)}(z)}{f(z)}%
\right\vert +\left\vert A_{k-1}(z)\right\vert \left\vert \frac{f^{(k-1)}(z)}{%
f(z)}\right\vert +\cdots +\left\vert A_{1}(z)\right\vert \left\vert \frac{%
f^{\prime }(z)}{f(z)}\right\vert \text{.}  \label{15}
\end{equation}%
If%
\begin{equation*}
\max \{\sigma _{(\alpha ,\beta ,\gamma )}[A_{j}],j=1,...,k-1\}<\sigma
_{(\alpha ,\beta ,\gamma )}[A_{0}]=\sigma _{0}<+\infty \text{,}
\end{equation*}%
then by Theorem \ref{t3}, we obtain that 
\begin{equation*}
\sigma _{(\alpha (\log ),\beta ,\gamma )}[f]=\sigma _{(\alpha ,\beta ,\gamma
)}[A_{0}].
\end{equation*}%
Suppose that%
\begin{equation*}
\max \{\sigma _{(\alpha ,\beta ,\gamma )}[A_{j}],j=1,...,k-1\}=\sigma
_{(\alpha ,\beta ,\gamma )}[A_{0}]=\sigma _{0}<+\infty
\end{equation*}%
and%
\begin{equation*}
\max \{\tau _{(\alpha ,\beta ,\gamma ),M}[A_{j}]:\sigma _{(\alpha ,\beta
,\gamma )}[A_{j}]=\sigma _{(\alpha ,\beta ,\gamma )}[A_{0}]>0\}<\tau
_{(\alpha ,\beta ,\gamma ),M}[A_{0}]=\tau _{M}<+\infty \text{.}
\end{equation*}%
First we prove that 
\begin{equation*}
\sigma _{1}=\sigma _{(\alpha (\log ),\beta ,\gamma )}[f]\geq \sigma
_{(\alpha ,\beta ,\gamma )}[A_{0}]=\sigma _{0}.
\end{equation*}%
By assumption there exists a set $K\subseteq \{1,2,...,k-1\}$ such that%
\begin{equation*}
\sigma _{(\alpha ,\beta ,\gamma )}[A_{j}]=\sigma _{(\alpha ,\beta ,\gamma
)}[A_{0}]=\sigma _{0},\text{ }j\in K
\end{equation*}%
and%
\begin{equation*}
\sigma _{(\alpha ,\beta ,\gamma )}[A_{j}]<\sigma _{(\alpha ,\beta ,\gamma
)}[A_{0}],\text{ }j\in \{1,...,k-1\}\setminus K\text{.}
\end{equation*}%
Thus, we choose $\lambda _{1}$ and $\lambda _{2}$ satisfying%
\begin{equation*}
\max \{\tau _{(\alpha ,\beta ,\gamma ),M}[A_{j}]:j\in K\}<\lambda
_{1}<\lambda _{2}<\tau _{(\alpha ,\beta ,\gamma ),M}[A_{0}]=\tau _{M}\text{.}
\end{equation*}%
For sufficiently large $r$,%
\begin{equation}
\left\vert A_{j}(z)\right\vert \leq \exp ^{[2]}\left\{ \alpha ^{-1}(\log
(\lambda _{1}(\exp \left( \beta \left( \log \gamma \left( r\right) \right)
\right) )^{\sigma _{0}}))\right\} ,\text{ }j\in K  \label{16}
\end{equation}%
and%
\begin{equation*}
\left\vert A_{j}(z)\right\vert \leq \exp ^{[2]}\left\{ \alpha ^{-1}(\log
(\lambda _{1}(\exp \left( \beta \left( \log \gamma \left( r\right) \right)
\right) )^{\xi }))\right\}
\end{equation*}%
\begin{equation}
\leq \exp ^{[2]}\left\{ \alpha ^{-1}(\log (\lambda _{1}(\exp \left( \beta
\left( \log \gamma \left( r\right) \right) \right) )^{\sigma _{0}}))\right\}
,\text{ }j\in \{1,...,k-1\}\setminus K\text{,}  \label{17}
\end{equation}%
where $0<\xi <\sigma _{0}$. By Lemma \ref{l7}, there exists a set $I\subset
\left( 1,+\infty \right) $ with infinite logarithmic measure, such that for
all $r\in I$,%
\begin{equation}
\left\vert A_{0}(z)\right\vert >\exp ^{[2]}\left\{ \alpha ^{-1}(\log
(\lambda _{2}(\exp \left( \beta \left( \log \gamma \left( r\right) \right)
\right) )^{\sigma _{0}}))\right\} \text{.}  \label{18}
\end{equation}%
By Lemma \ref{l9}, there exist a constant $B>0$ and a set $E\subset \left(
1,+\infty \right) $ having finite logarithmic measure, such that for all $z$
satisfying $\left\vert z\right\vert =r\notin E\cup \left[ 0,1\right] $,%
\begin{equation*}
\left\vert \frac{f^{(j)}(z)}{f(z)}\right\vert \leq B[T(2r,f)]^{k+1}\text{, }%
j=1,2,...,k\text{.}
\end{equation*}%
By Definition \ref{d1} and Proposition \ref{p1}, for any given 
\begin{equation*}
\varepsilon \in \left( 0,\max \left\{ \frac{\lambda _{2}-\lambda _{1}}{2}%
,\sigma _{0}-\sigma _{1}\right\} \right)
\end{equation*}%
and sufficiently large $\left\vert z\right\vert =r\notin E\cup \left[ 0,1%
\right] $, we get that%
\begin{eqnarray}
\left\vert \frac{f^{(j)}(z)}{f(z)}\right\vert &\leq &B[T(2r,f)]^{k+1}  \notag
\\
&\leq &B\left( \exp ^{[2]}\left\{ \alpha ^{-1}((\sigma _{1}+\varepsilon
)\beta (\log \gamma (2r)))\right\} \right) ^{k+1}\text{, }j=1,...,k\text{.}
\label{19}
\end{eqnarray}%
Hence substituting (\ref{16}), (\ref{17}), (\ref{18}) and (\ref{19}) into (%
\ref{15}), for sufficiently large $\left\vert z\right\vert =r\in I\setminus
(E\cup \lbrack 0,1])$,%
\begin{eqnarray}
&&\exp ^{[2]}\left\{ \alpha ^{-1}(\log (\lambda _{2}(\exp \left( \beta
\left( \log \gamma \left( r\right) \right) \right) )^{\sigma _{0}}))\right\}
\notag \\
&\leq &kB\exp ^{[2]}\left\{ \alpha ^{-1}(\log (\lambda _{1}(\exp \left(
\beta \left( \log \gamma \left( r\right) \right) \right) )^{\sigma
_{0}}))\right\} \cdot \left( \exp ^{[2]}\left\{ \alpha ^{-1}((\sigma
_{1}+\varepsilon )\beta (\log \gamma (2r)))\right\} \right) ^{k+1}  \notag \\
&\leq &\exp ^{[2]}\left\{ \alpha ^{-1}(\log ((\lambda _{1}+2\varepsilon
)(\exp \left( \beta \left( \log \gamma \left( r\right) \right) \right)
)^{\sigma _{0}}))\right\} \text{.}  \label{20}
\end{eqnarray}%
Obviously, $I\setminus (E\cup \lbrack 0,1])$ is of infinite logarithmic
measure. By (\ref{20}), there exists a sequence of points $\{\left\vert
z_{n}\right\vert \}=\{r_{n}\}\subset I\setminus (E\cup \lbrack 0,1])$
tending to $+\infty $ such that%
\begin{eqnarray*}
&&\exp ^{[2]}\left\{ \alpha ^{-1}(\log (\lambda _{2}(\exp (\beta (\log
\gamma (r_{n}))))^{\sigma _{0}}))\right\} \\
&\leq &\exp ^{[2]}\left\{ \alpha ^{-1}(\log ((\lambda _{1}+2\varepsilon
)(\exp (\beta (\log \gamma (r_{n}))))^{\sigma _{0}}))\right\} \text{.}
\end{eqnarray*}%
From above we get that $\lambda _{1}\geq \lambda _{2}$. This contradiction
implies 
\begin{equation*}
\sigma _{(\alpha (\log ),\beta ,\gamma )}[f]\geq \sigma _{(\alpha ,\beta
,\gamma )}[A_{0}]\text{.}
\end{equation*}%
On the other hand, by Lemma \ref{l10}, we get that%
\begin{equation*}
\sigma _{(\alpha (\log ),\beta ,\gamma )}[f]\leq \max \{\sigma _{(\alpha
,\beta ,\gamma )}[A_{j}]:j=0,1,...,k-1\}=\sigma _{(\alpha ,\beta ,\gamma
)}[A_{0}]\text{.}
\end{equation*}%
Hence every nontrivial solution $f$ of (\ref{1}) satisfies $\sigma _{(\alpha
(\log ),\beta ,\gamma )}[f]=\sigma _{(\alpha ,\beta ,\gamma )}[A_{0}]$. 
\newline

\end{document}